\theoremstyle{definition}
\newtheorem{theorem}{Theorem}
\newtheorem*{theorem*}{Theorem}
\newtheorem{definition}[theorem]{Definition}
\newtheorem*{definition*}{Definition}
\newtheorem{prop}[theorem]{Proposition}
\newtheorem{lemma}[theorem]{Lemma}
\newtheorem{coro}[theorem]{Corollary}
\newtheorem{example}[theorem]{Example}
\newtheorem{rem}[theorem]{Remark}
\newtheorem{notation}[theorem]{Notation}
\numberwithin{theorem}{section}
\numberwithin{equation}{section}
\DeclareFontFamily{U}{mathx}{}
\DeclareFontShape{U}{mathx}{m}{n}{<-> mathx10}{}
\DeclareSymbolFont{mathx}{U}{mathx}{m}{n}
\DeclareMathAccent{\widehat}{0}{mathx}{"70}
\DeclareMathAccent{\widecheck}{0}{mathx}{"71}
\DeclareMathSymbol{\antishriek}{\mathord}{operators}{'74}
\newcommand{\graphg}[0]{
\begin{tikzpicture}[x=1pt,y=1pt,yscale=-0.2,xscale=0.4,baseline=10pt, line width = 0.7pt]

%black vertices
\draw  [fill= {rgb, 255:red, 0; green, 0; blue, 0 }  ,fill opacity=1 ] (0, -170) circle (5);
\draw  [fill= {rgb, 255:red, 0; green, 0; blue, 0 }  ,fill opacity=1 ] (0, 75) circle (5);

%white vertices
\draw  (-50, -50) circle (5);
\draw  (0, -100) circle (5);
\draw  (50, -50) circle (5);
\draw  (0, 0) circle (5);

%dashed edges
\draw  [dash pattern={on 4pt off 3pt}]  (0,75) -- (0,5);
\draw [dash pattern={on 4pt off 3pt}]  (45,-50) -- (-45,-50);
\draw  [dash pattern={on 4pt off 3pt}]  (0,-170) -- (0,-105);

\draw   [dash pattern={on 4pt off 3pt}]  (-45,-50) -- (-5,-95);
\draw [dash pattern={on 4pt off 3pt}]  (5,-95) -- (45,-50);
\draw  [dash pattern={on 4pt off 3pt}]  (45,-45) -- (5,0);
\draw  [dash pattern={on 4pt off 3pt}]  (-5,0) -- (-45,-45);

\end{tikzpicture}}
\newcommand{\chorddiagramb}[0]{
\begin{tikzpicture}[x=1pt,y=1pt,yscale=0.2,xscale=0.3,baseline=20pt, line width = 1pt]
%solid lines
\draw [color={rgb, 255:red, 0; green, 0; blue, 255 }, line width=1pt]  [-Stealth] (0,-100)--(0, 400);
\draw [color={rgb, 255:red, 0; green, 0; blue, 255 }, line width=1pt]  [-Stealth] (200,-100)--(200, 400);
%chord
\draw [color={rgb, 255:red, 0; green, 0; blue,0 }, line width=1pt]   [Stealth-, dash pattern = on 3pt off 3pt]  (0,300)--(200,0);
\node at (160,300) {$(1)$};
\draw [color={rgb, 255:red, 0; green, 0; blue,0 }, line width=1pt]   [-Stealth, dash pattern = on 3pt off 3pt]  (0,0)--(200,300) ;
\node at (40,300) {$(3)$};
\draw [color={rgb, 255:red, 0; green, 0; blue,0 }, line width=1pt]   [-Stealth, dash pattern = on 3pt off 3pt]  (0,150)--(200,150) ;
\node at (40,180) {$(2)$};

%black vertex
\draw  [fill={rgb, 255:red, 0; green, 0; blue, 0 }  ,fill opacity=1 ] (0,0) circle (5) ;
\node at (-20,0) {$O$};
\draw  [fill={rgb, 255:red, 0; green, 0; blue, 0 }  ,fill opacity=1 ] (0,150) circle (5) ;
\draw [fill={rgb, 255:red, 0; green, 0; blue, 0 }  ,fill opacity=1 ] (0,300) circle (5);

\draw  [fill={rgb, 255:red, 0; green, 0; blue, 0 }  ,fill opacity=1 ] (200,0) circle (5) ;
\node at (220,0) {$O$};
\draw  [fill={rgb, 255:red, 0; green, 0; blue, 0 }  ,fill opacity=1 ] (200,150) circle (5) ;
\draw [fill={rgb, 255:red, 0; green, 0; blue, 0 }  ,fill opacity=1 ] (200,300) circle (5);

\end{tikzpicture}}
\newcommand{\segmenta}[0]{
\begin{tikzpicture}[x=1pt,y=1pt,yscale=0.15,xscale=0.15, baseline=-3pt] 

%solid edges
\draw  [-Stealth, line width =1pt, color ={rgb, 255:red, 0; green, 0; blue, 225 } ] (0,0)--(500,0);

%dashed edges
\draw  [dash pattern = on 2pt off 3 pt, line width =1pt] (100,0)--(100,150);
\draw  [dash pattern = on 2pt off 3 pt, line width =1pt] (300,0)--(300,150);

%black vertices
\draw  [fill={rgb, 255:red, 0; green, 0; blue, 0 }  ,fill opacity=1 ]  (100,0) circle (20);
\draw  [fill={rgb, 255:red, 0; green, 0; blue, 0 }  ,fill opacity=1 ] (300,0) circle (20);
\end{tikzpicture}}
\newcommand{\segmentb}[0]{
\begin{tikzpicture}[x=1pt,y=1pt,yscale=0.15,xscale=0.15, baseline=-3pt] 

%solid edges
\draw  [-Stealth, line width =1pt, color ={rgb, 255:red, 0; green, 0; blue, 225 }] (0,0)--(700,0);

%dashed edges
\draw  [dash pattern = on 2pt off 3 pt, line width =1pt] (100,0)--(100,150);
\draw  [dash pattern = on 2pt off 3 pt, line width =1pt] (300,0)--(300,150);
\draw  [dash pattern = on 2pt off 3 pt, line width =1pt] (500,0)--(500,150);

%black vertices
\draw  [fill={rgb, 255:red, 0; green, 0; blue, 0 }  ,fill opacity=1 ]  (100,0) circle (20);
\draw  [fill={rgb, 255:red, 0; green, 0; blue, 0 }  ,fill opacity=1 ] (300,0) circle (20);
\draw  [fill={rgb, 255:red, 0; green, 0; blue, 0 }  ,fill opacity=1 ] (500,0) circle (20);

\end{tikzpicture}}
 \newcommand{\ctext}[1]{\raise0ex\hbox{\textcircled{\scriptsize{#1}}}}
\begin{document} 
\title{Some non-trivial cycles of the space of long embeddings detected by configuration space integral invariants using $g$-loop $(g =2, 3)$ graphs}
\author{Leo Yoshioka\thanks{Graduate School of Mathematical Sciences, The University of Tokyo\newline\qquad e-mail:yoshioka@ms.u-tokyo.ac.jp}}
\maketitle

\begin{abstract}
In this paper, we give some non-trivial geometric cycles of the space of long embeddings $\mathbb{R}^j \rightarrow \mathbb{R}^n$ $(n-j \geq 2)$ modulo immersions.
We construct a class of cycles from specific chord diagrams associated with the $2$-loop or $3$-loop hairy graphs. 
To detect these cycles, we use cocycles obtained by the $2$-loop or $3$-loop part of modified configuration space integrals using a modified Bott-Cattaneo-Rossi graph complex.
We show the non-triviality of the cycles by pairing argument, which is reduced to pairing of graphs with the chord diagrams. 
As a corollary of the $2$-loop part, we provide an alternative proof of the non-finite generation of the $(j-1)$-th rational homotopy group of the space of long embeddings of codimension two, which Budney--Gabai and Watanabe first established. We also show the non-finite generation of the $2(j-1)$-th homotopy group by using the $3$-loop part. 
\end{abstract}

\part*{Introduction}
\addcontentsline{toc}{part}{Introduction}

A \textit{long embedding} is an embedding of $\mathbb{R}^j$ into $\mathbb{R}^n$  that coincides with the standard linear embedding outside a ball in $\mathbb{R}^j$. The difference $n-j$ of the dimensions is called the \textit{codimension}. A \textit{long $j$-knot} is a long embedding $\mathbb{R}^{j} \rightarrow \mathbb{R}^{j+2}$ of codimension two. We write  $\mathcal{K}_{n,j} = \text{Emb} (\mathbb{R}^j, \mathbb{R}^n) $ for the space of long embeddings equipped with the usual $C^{\infty}$ topology.  
Since embeddings are immersions, there is a map $\mathcal{K}_{n,j} \rightarrow \text{Imm} (\mathbb{R}^j, \mathbb{R}^n)$ to the space of \textit{long immersions}.
We write $\overline{\mathcal{K}}_{n,j} = \overline{\text{Emb}} (\mathbb{R}^j, \mathbb{R}^n)$ for the homotopy fiber of this map at the standard immersion. This space is called \textit{the space of long embeddings modulo immersions}. 

In 2017, Fresse, Turchin and Willwacher \cite{FTW 1, FTW 2}, following Arone and Turchin \cite{AT 1, AT 2}, showed the surprising result that the homology of a graph complex, called the \textit{hairy graph complex}, has the full dimensional information of the rational homotopy group of $\overline{\mathcal{K}}_{n,j}$, when the codimension is greater than or equal to three. 
Their result is based on a deep homotopy theory:  the \textit{Taylor approximation} $\overline{\mathcal{K}}_{n,j} \rightarrow T_{\infty} \overline{\mathcal{K}}_{n,j}$ by the Goodwillie-Klein-Weiss \textit{embedding calculus} \cite{GKW, GW, Wei}, which is weakly equivalent when $n-j \geq 3$. The hairy graph complex is introduced by applying the rational homotopy theory to the \textit{derived mapping space models} of $T_{\infty} \overline{\mathcal{K}}_{n,j}$ \cite{AT 1, BW}. 

On the other hand, from the 1990s to 2010s, Bott \cite{Bot}, Cattaneo, Rossi \cite{CR}, Sakai \cite{Sak} and Watanabe \cite{SW, Wat 2} developed a geometric approach which works well even when $n-j = 2$. They introduced another graph complex $GC^{BCR}_{n,j}$, which we call the \textit{BCR graph complex}. 
They gave a liner map from the BCR graph complex to the de Rham complex of $\mathcal{K}_{n,j}$ (and of $\overline{\mathcal{K}}_{n,j}$)  through Bott-Taubes type \textit{configuration space integrals} \cite{BT};
\[
I: GC^{BCR}_{n,j}  \longrightarrow \Omega_{dR} \mathcal{K}_{n,j} \quad \Gamma \longmapsto \int_{\text{Conf}_{\Gamma}} \omega(\Gamma). 
\]
Unfortunately, it is unknown whether this map gives a cochain map. Their solution was using the decomposition of the graph complex by the first Betti number $g$ of graphs: They restricted graphs to $g \leq 1$, and showed the non-triviality of infinitely many geometric cycles of $\mathcal{K}_{n,j}$ and $\overline{\mathcal{K}}_{n,j}$ for $n-j\geq 2$ by using configuration space integrals of $0$-loop  or $1$-loop graph cocycles.

In the author's paper \cite{Yos 1}, we addressed the $2$-loop part ($g=2$) for the simplest case. We explicitly gave the simplest graph cocycle for odd $n$ and $j$ which consists of $2$-loop BCR graphs including the $2$-loop hairy graph 
\[
\Theta(1,0,1) = \graphg.
\]
We showed that the configuration space integrals of this graph cocycle give a cocycle of $\overline{\mathcal{K}}_{n,j}$ after adding correction terms. We gave a geometric cycle $\overline{\psi} : S^{j-1} \times (S^{n-j-2} )^{\times 3} \rightarrow \overline{\mathcal{K}}_{n,j}$ and showed the non-triviality by pairing argument. However, we have obtained no other cocycles of $\overline{\mathcal{K}}_{n,j}$ from this approach: We have found no other graph cocycles of $GC^{BCR}_{n,j}$. Even if they exist, there is no guarantee they give cocycles of  $\overline{\mathcal{K}}_{n,j}$. 

To handle this situation, in \cite{Yos 3}, we constructed general cocycles of $\overline{\mathcal{K}}_{n,j}$ by modifying configuration space integrals. Instead of using the original BCR graph complex $GC^{BCR}_{n,j}$, we introduced a new graph complex $DGC_{n,j}$, the \textit{decorated graph complex}.  We defined configuration space integrals
\[
\overline{I}: DGC_{n,j} \rightarrow \Omega_{dR} (\overline{\mathcal{K}}_{n,j}), 
\] 
which is a cochain map when $n-j \geq 2$ and $j \geq 3$ \footnote{It holds even when $j = 2$, if $\overline{I}$ is restricted to the subspace of $DGC_{n,j}$ generated by decorated graphs with the first Betti number $g \leq 3$. }. We also showed that $DGC_{n,j}$ is quasi-isomorphic to the hairy graph complex $HGC_{n,j}$. 

\section*{}
In this paper, we show the non-triviality of some geometric cycles of $\overline{\mathcal{K}}_{n,j}$ $(n-j \geq 2)$  by using the $2$-loop and $3$-loop part of the modified configuration space integrals introduced in \cite{Yos 3}. The construction of the geometric cycles is based on \cite{Yos 1}: we give geometric cycles associated with chord diagrams, which also correspond to \textit{ribbon presentations} \cite{HKS, HS} or \textit{planetary systems} \cite{Yos 1}. The pairing is reduced to pairing of graphs with chord diagrams similarly to \cite{Yos 1}. The new point is that we give the chord diagrams associated with $2$-loop or $3$-loop hairy graphs. Then, somewhat surprisingly, the result of the pairing coincides with the coefficient of the original hairy graph in the given graph cocycle.

In Section \ref{Construction of general $2$-loop and $3$-loop cycles}, we give geometric cycles of $\overline{\mathcal{K}}_{n,j}$:
\[
d(\Theta(p,q,r)): S^{j-1} \times (S^{n-j-2})^{\times k} \rightarrow  \overline{\mathcal{K}}_{n,j} \quad (k = p+q+r+1). 
\]
We systematically construct these cycles from chord diagrams on oriented lines, which recover ribbon presentations and planetary systems \cite{Yos 1}. 
These chord diagrams are associated with the $2$-loop hairy graphs $\Theta(p,q,r)$, where $\Theta(p,q,r)$ is the graph obtained by attaching $p$, $q$, $r$ hairs to the graph $\Theta$. See Figure \ref{Thetapqr}. Note that the parameter space $(S^{n-j-2})^{\times k}$ already appeared in Sakai and Watanabe's construction \cite{SW} of \textit{wheel-like cycles} of the $1$-loop case, while the parameter space $S^{j-1}$ is firstly introduced in the (simplest) $2$-loop case in \cite{Yos 1}. 
We also give $3$-loop cycles 
\[
d(\ctext{Y}(p_i)_{i = 1, \dots, 6}) : (S^{n-j-2})^k \times (S^{j-1})^2  \rightarrow \overline{\mathcal{K}}_{n,j} \quad (k = 2 + \sum_{i=1}^6 p_i).
\]

In particular, when $n-j = 0$, the parameter space $(S^{n-j-2})^{\times k}$ equals to the product of $S^{0} = \{+1, -1\}$. We replace the cycles $d(\Theta(p,q,r))$ (and $d(\ctext{Y}(p_i)_{i = 1, \dots, 6})$) with the modified cycles $d^{\prime}(\Theta(p,q,r))$ (and $d^{\prime}(\ctext{Y}(p_i)_{i = 1, \dots, 6})$)  so that they are homologous to the $(+1, +1, \dots, +1)$ component and that the $(+1, +1, \dots, +1)$ component lies in the unknot component.

In Section \ref{Pairing for general cases}, we detect the non-triviality of the cycles in Section \ref{Construction of general $2$-loop and $3$-loop cycles} by the modified configuration space integrals. The following is our main result. 
\begin{theorem}[Theorem \ref{nontrivialityof2loopcycles}, \ref{nontrivialityof3loopcycles}] 
\label{main theorem on general cases}
Let $\mathcal{H}_{n,j}(g)$  ($g=2, 3$)  be the subspace of $H_{\ast}(\overline{\mathcal{K}}_{n,j})$ which is generated by (replaced) $g$-loop cycles  in Section \ref{Construction of general $2$-loop and $3$-loop cycles}. Then there is a map 
\[
Pair: H^{top}(HGC_{n,j}(g)) \otimes \mathcal{H}_{n,j} (g) \rightarrow \mathbb{R},
\]
which is non-degenerate with respect to $H^{top}(HGC_{n,j}(g))$. Here, $H^{top}(HGC_{n,j})$ is the top cohomology of $HGC_{n,j}$
 \footnote{The top cohomology is the subspace of the cohomology of degrees $k(n-j-2) + (j-1)(g-1)$, where $g$ and $k$ are the first Betti number and the order of graphs, respectively. If $n-j=2$ and $g=2$, all the degrees concentrated on degree $j-1$. }. It is generated by cocycles consisting of uni-trivalent hairy graphs.
\end{theorem}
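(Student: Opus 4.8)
The plan is to prove the two non-degeneracy statements (the $g=2$ case, Theorem \ref{nontrivialityof2loopcycles}, and the $g=3$ case, Theorem \ref{nontrivialityof3loopcycles}) by the same strategy: construct the pairing $Pair$ as the evaluation of the modified configuration space integral cocycle $\overline{I}(\Gamma)$ on the geometric cycle, and then show that this evaluation, when $\Gamma$ is a top-cohomology class, computes (up to a nonzero universal constant) the coefficient of the relevant uni-trivalent hairy graph in $\Gamma$. Since by the last sentence of the statement $H^{top}(HGC_{n,j}(g))$ is spanned by cocycles built out of uni-trivalent hairy graphs, and since $DGC_{n,j} \simeq HGC_{n,j}$ as established in \cite{Yos 3}, a cohomology class there is detected by the coefficients of these uni-trivalent generators; pairing each such generator $\Theta(p,q,r)$ (resp.\ $\ctext{Y}(p_i)$) against the matching cycle $d(\Theta(p,q,r))$ (resp.\ $d(\ctext{Y}(p_i))$) nonzero then gives a triangular, hence non-degenerate, pairing matrix with respect to the first factor.

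First I would set up the pairing: for a cocycle $\Gamma \in DGC_{n,j}$ of top degree and a geometric cycle $c: M \to \overline{\mathcal{K}}_{n,j}$ from Section \ref{Construction of general $2$-loop and $3$-loop cycles}, put $Pair([\Gamma] \otimes [c]) = \int_M c^* \overline{I}(\Gamma)$. This is well-defined on homology/cohomology because $\overline{I}$ is a cochain map for $n-j \geq 2, j\geq 3$ (and for $g \leq 3$ even when $j=2$), exactly the range we need. Next, I would compute this integral when $c = d(\Theta(p,q,r))$ and $\Gamma$ ranges over the uni-trivalent generators. Following the mechanism of \cite{Yos 1}, the chord diagram underlying $d(\Theta(p,q,r))$ is engineered so that the propagator forms in $\overline{I}(\Gamma)$ pull back to forms supported, up to exact terms and terms that integrate to zero for dimensional/degree reasons, on the locus where the configuration points sit on the chords in the pattern dictated by the graph $\Gamma$; the only graph whose "shape" matches the chord diagram of $\Theta(p,q,r)$ contributes, and it contributes its coefficient in $\Gamma$ times a sign and a nonzero combinatorial/volume constant coming from the integral over $(S^{n-j-2})^{\times k} \times S^{j-1}$ of the standard symbol forms (a product of volume forms of the spheres). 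This is the "somewhat surprisingly" phenomenon advertised in the introduction: the pairing reads off the coefficient of the original hairy graph.

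The main obstacle I expect is the vanishing of the unwanted contributions — showing that all graphs $\Gamma'$ appearing in $\Gamma$ other than the single matching uni-trivalent hairy graph pair to zero with a fixed chord diagram, and that the "correction/decoration" terms in $DGC_{n,j}$ (which distinguish it from the naive $GC^{BCR}_{n,j}$) also contribute zero. This requires a careful bookkeeping argument: a dimension/degree count on each face of the compactified configuration space bundle over $M$, plus the combinatorial claim that a chord diagram built from one uni-trivalent hairy graph $G$ admits a nonzero pairing only with $G$ itself among uni-trivalent hairy graphs of the same order and loop number. I would isolate this as a separate pairing lemma (graphs vs.\ chord diagrams), prove it by the planetary-system/ribbon-presentation combinatorics of \cite{Yos 1, HKS, HS}, and then feed it back. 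Handling the degenerate case $n-j=0$ — where $S^{n-j-2}$ is not a manifold and one must pass to the replaced cycles $d'(\Theta(p,q,r))$, $d'(\ctext{Y}(p_i))$ supported in the unknot component — is a further technical point, but it reduces to the same pairing lemma applied to the $(+1,\dots,+1)$ component once one checks the replaced cycle is homologous to that component. Finally, assembling: order the uni-trivalent generators so that each $d(\Theta(p,q,r))$ (resp.\ $d(\ctext{Y}(p_i))$) pairs nonzero with its own generator and the pairing matrix is (block-)triangular with nonzero diagonal, giving non-degeneracy in the $H^{top}(HGC_{n,j}(g))$ variable.
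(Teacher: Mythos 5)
Your overall frame agrees with the paper: define $Pair$ by evaluating $\overline{I}(H)$ on the cycles, use the quasi-isomorphism $DGC_{n,j}\simeq HGC_{n,j}$, reduce the evaluation to a combinatorial count against the chord diagram, check that decorations, white vertices and the $d'$-versus-$d$ replacement do not affect the count, and conclude non-degeneracy from the fact that the pairing reads off hairy-graph coefficients. However, there is a genuine gap in the central computational step. You assert that the integral localizes so that ``the only graph whose shape matches the chord diagram of $\Theta(p,q,r)$ contributes, and it contributes its coefficient,'' and you formulate the key lemma as: the chord diagram of a uni-trivalent hairy graph $G$ pairs non-trivially only with $G$ itself. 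This is not how the mechanism works, and as stated it would give the wrong answer: the hairy graph $\Theta(p,q,r)$ has only $p+q+r=k-1$ univalent (external) vertices, so by the localization/dimension count (Proposition \ref{keypropforpairing} together with the Localizing and Pairing Lemmas) it contributes \emph{zero} to the pairing with a chord diagram of order $k$, which requires $2k$ external black vertices carrying dashed edges. The graphs that actually survive the count on $D(\Theta(p,q,r))$ are the four plain graphs $D_1,\dots,D_4$ with all $2k$ vertices on the knot, obtained by resolving the two trivalent vertices of $\Theta(p,q,r)$ onto the line (and $16$ or $24$ such graphs in the $3$-loop case, with an extra cancellation of a graph containing an internal black vertex in case (2)). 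So several distinct graphs of the cocycle contribute, each with its own coefficient.

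The missing idea is that one must then invoke the cocycle condition on $H$, i.e.\ the STU relations among the coefficients, to convert the resulting sum $w(D_1)+w(D_2)+w(D_3)+w(D_4)$ into the single coefficient $w(\Theta(p,q,r))$ (Lemma \ref{keylemmaforpairing} and Theorems \ref{keyprop}, \ref{keyprop2}, \ref{keyprop3}); this is exactly the ``somewhat surprising'' coincidence advertised in the introduction, and it is a statement about the cocycle, not about localization onto a single matching graph. Without this step your claimed (block-)triangular pairing matrix is unsubstantiated, and the implication ``all pairings vanish $\Rightarrow$ all uni-trivalent coefficients vanish $\Rightarrow$ $h=0$ in $H^{top}$'' does not follow. (Minor points: the degenerate codimension is $n-j=2$, not $n-j=0$, and the paper's conclusion is in fact stronger than triangularity — the pairing is, up to sign, the coefficient functional of $\Theta(p,q,r)$ resp.\ the $3$-loop hairy graph itself.)
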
 

On the other hand, the $2$-loop part of $ H ^{top}(HGC_{n,j})$ is well-studied in \cite{Nak, CCTW}, and is shown to be infinite-dimensional. The  $3$-loop  part is also infinite-dimensional at least when $n-j$ is even \cite{MO, Yos 3}. As a corollary of the $2$-loop and $3$-loop part, we have the following. Let $\text{Emb}_{\partial}(D^j, D^n)_{\iota}$ be the unknot component of the space of embeddings $D^j \rightarrow D^n$ which are standard near $\partial D^j$ \footnote{The space $\text{Emb}_{\partial}(D^j, D^n)$ is weakly equivalent to $\mathcal{K}_{n,j}$.}. 

\begin{coro}
\label{corollarymaintheorem}
[Cororally \ref{non-finite generation}, \ref{3loop non-finite generation}]
Let $j \geq 2$. Then, 
\begin{itemize}
\item $\pi_{j-1} (\text{Emb}_{\partial}(D^j, D^{j+2})_{\iota}) \otimes \mathbb{Q}$ is infinite-dimensional.
\item $\pi_{2(j-1)} (\text{Emb}_{\partial}(D^j, D^{j+2})_{\iota}) \otimes \mathbb{Q}$  is infinite-dimensional.
\end{itemize}
\end{coro}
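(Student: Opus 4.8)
\emph{Proof strategy.} The plan is to read these two statements off Theorem~\ref{main theorem on general cases} in the codimension-two case $n=j+2$, using the known infinite-dimensionality of the top cohomology of the hairy graph complex, and then to transfer from $\overline{\mathcal{K}}_{j+2,j}$ to $\mathcal{K}_{j+2,j}\simeq\text{Emb}_{\partial}(D^j,D^{j+2})$ through the immersion fibration. First I would specialise $n=j+2$, so that $S^{n-j-2}=S^0$ and, after the replacement carried out in Section~\ref{Construction of general $2$-loop and $3$-loop cycles}, the $2$-loop cycle $d^{\prime}(\Theta(p,q,r))$ becomes a genuine map $S^{j-1}\to\overline{\mathcal{K}}_{j+2,j}$ whose image lies in the unknot component, while the $3$-loop cycle $d^{\prime}(\ctext{Y}(p_i)_{i=1,\dots,6})$ becomes a map $(S^{j-1})^{\times 2}\to\overline{\mathcal{K}}_{j+2,j}$ whose image lies in the unknot component; correspondingly $\mathcal{H}_{j+2,j}(2)$ is a subspace of $H_{j-1}$ and $\mathcal{H}_{j+2,j}(3)$ a subspace of $H_{2(j-1)}$ of that component. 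Since $n-j=2$ is even, the $2$-loop part of $H^{top}(HGC_{j+2,j})$ is infinite-dimensional by \cite{Nak, CCTW} and the $3$-loop part by \cite{MO, Yos 3}; as non-degeneracy of $Pair$ in the first variable gives an injection of $H^{top}(HGC_{j+2,j}(g))$ into the dual of $\mathcal{H}_{j+2,j}(g)$, it follows that $\mathcal{H}_{j+2,j}(2)$ and $\mathcal{H}_{j+2,j}(3)$ are both infinite-dimensional.

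Next I would pass from homology to homotopy of $\overline{\mathcal{K}}_{j+2,j}$, working in the unknot component, which by the stacking of long embeddings (with the unknot as unit) is a connected $H$-space. For the first statement nothing is needed beyond Hurewicz: each $d^{\prime}(\Theta(p,q,r))$ is a map out of the sphere $S^{j-1}$, so its homology class is the Hurewicz image of a class in $\pi_{j-1}\otimes\mathbb{Q}$, and an infinite linearly independent family of such homology classes yields, via the linearity of the Hurewicz map, an infinite linearly independent family in $\pi_{j-1}\otimes\mathbb{Q}$ (for $j=2$ the $H$-space structure makes $\pi_1$ abelian, so $H_1(-;\mathbb{Q})\cong\pi_1\otimes\mathbb{Q}$ and the statement is literal). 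For the second statement the domain is a product of two spheres, so a homology class need not be spherical; here I would use the Milnor--Moore theorem, which for a connected $H$-space identifies $H_*(-;\mathbb{Q})$ with the universal enveloping algebra of its rational homotopy Lie algebra, so that $H_{2(j-1)}(-;\mathbb{Q})/(\text{decomposables})\cong\pi_{2(j-1)}(-)\otimes\mathbb{Q}$. Because the $3$-loop cocycles generating $H^{top}(HGC_{j+2,j}(3))$ consist of \emph{connected} hairy graphs, their configuration space integrals are primitive classes in the Hopf algebra $H^*(\overline{\mathcal{K}}_{j+2,j};\mathbb{Q})$, and primitives annihilate decomposables; hence $Pair$ vanishes on the decomposable part of $\mathcal{H}_{j+2,j}(3)$ and descends to a pairing on the indecomposable quotient which is still non-degenerate in the infinite-dimensional first variable. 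Since that quotient embeds into $\pi_{2(j-1)}(\overline{\mathcal{K}}_{j+2,j})\otimes\mathbb{Q}$ by Milnor--Moore, the latter is infinite-dimensional.

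Finally I would transfer to $\text{Emb}_{\partial}(D^j,D^{j+2})\simeq\mathcal{K}_{j+2,j}$ via the homotopy fibration $\overline{\mathcal{K}}_{j+2,j}\to\mathcal{K}_{j+2,j}\to\text{Imm}(\mathbb{R}^j,\mathbb{R}^{j+2})$. By the Smale--Hirsch theorem $\text{Imm}(\mathbb{R}^j,\mathbb{R}^{j+2})\simeq\Omega^j V_j(\mathbb{R}^{j+2})$, and $V_j(\mathbb{R}^{j+2})$ is a compact homogeneous space, so $\pi_*(\text{Imm})\otimes\mathbb{Q}$ is finite-dimensional in each degree. Restricting the fibration over the unknot component and using the long exact sequence of rational homotopy groups (all abelian in the relevant degrees, being homotopy groups of $H$-space components or of a loop space), the kernel of $\pi_k(\overline{\mathcal{K}}_{j+2,j})\otimes\mathbb{Q}\to\pi_k(\mathcal{K}_{j+2,j})\otimes\mathbb{Q}$ is finite-dimensional for every $k$; since the source is infinite-dimensional for $k=j-1$ and $k=2(j-1)$ by the previous paragraphs, so is the image in $\pi_k(\mathcal{K}_{j+2,j})\otimes\mathbb{Q}$. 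This gives that $\pi_{j-1}(\text{Emb}_{\partial}(D^j,D^{j+2})_{\iota})\otimes\mathbb{Q}$ and $\pi_{2(j-1)}(\text{Emb}_{\partial}(D^j,D^{j+2})_{\iota})\otimes\mathbb{Q}$ are both infinite-dimensional.

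I expect the $3$-loop step to be the main obstacle. Knowing only that $H_{2(j-1)}(\overline{\mathcal{K}}_{j+2,j};\mathbb{Q})$ is infinite-dimensional does not suffice, since the already-established infinite-dimensionality of $\pi_{j-1}\otimes\mathbb{Q}$ produces, via Pontryagin products, an infinite-dimensional space of decomposable classes in degree $2(j-1)$. The real content is thus the indecomposability of the $3$-loop cycles in homology, which rests on the primitivity of configuration space integrals of connected graphs and on the compatibility of the pairing of Theorem~\ref{main theorem on general cases} with the $H$-space coalgebra structure; establishing this carefully is where the work lies.
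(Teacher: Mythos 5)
Your $2$-loop argument and your passage from $\overline{\mathcal{K}}_{j+2,j}$ to $\text{Emb}_{\partial}(D^j,D^{j+2})$ via Smale--Hirsch and the immersion fibration coincide with the paper's route (Corollary \ref{from homotopy group} together with Theorem \ref{nontrivialityof2loopcycles}; the paper leaves the immersion step implicit, and your spelling-out of it is correct). The problem is the $3$-loop half. Your route hinges on the assertion that $\overline{I}$ of a cocycle of connected hairy graphs is a primitive class for the coproduct dual to the stacking product, so that the pairing kills decomposables and descends to the indecomposable quotient. Nothing in this paper or in \cite{Yos 3} establishes such a compatibility of the modified integrals $\overline{I}$ with the $H$-space structure, and you yourself identify it as ``where the work lies''; so as written the second bullet is not proved. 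Two smaller points: by Milnor--Moore the indecomposable quotient of $H_*$ is $L/[L,L]$, a \emph{quotient} of $\pi_*\otimes\mathbb{Q}$ rather than a subspace (the infinite-dimensionality conclusion survives, but the statement should be corrected), and the usual finite-type hypotheses in Milnor--Moore need attention precisely because the space is not rationally of finite type in the relevant degree.

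The paper sidesteps all of this by a feature of the construction that your proposal does not use: the cycle $d^{\prime}(\ctext{Y}((p_i)_i))((\varepsilon_i=1)_i):(S^{j-1})^{2}\to\overline{\mathcal{K}}_{j+2,j}$ is null-homotopic when restricted to $S^{j-1}\times\ast$ and to $\ast\times S^{j-1}$ (Proposition \ref{prop for 3loop from homotopy group}), hence factors up to homotopy through the smash $S^{2(j-1)}$ and directly defines an element of $\pi_{2(j-1)}(\overline{\mathcal{K}}_{j+2,j})_{\iota}$ whose Hurewicz image is the class generating $\mathcal{H}_{j+2,j}(g=3)$ (Corollary \ref{3loop from homotopy group}). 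Thus $\mathcal{H}_{j+2,j}(g=3)$ is spanned by spherical classes, the decomposability issue never arises, and non-degeneracy of $Pair$ (Theorem \ref{nontrivialityof3loopcycles}) together with the infinite-dimensionality of $H^{top}(HGC_{j+2,j}(g=3))$ from \cite{MO, Yos 3} gives the second bullet by exactly the same Hurewicz argument you use in the $2$-loop case. To salvage your own route you would have to prove the primitivity of connected-graph integrals with respect to stacking, which is substantially more work than invoking the wedge-degeneracy already built into the cycles.
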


This first result is first established by Budney--Gabai and Watanabe \cite{BG, Wat 5} by studying the diffeomorphism group $\text{Diff}_{\partial}(D^{j+1}\times S^1)$. 
To the author's knowledge, the second result is new, though Budney--Gabai and Watanabe's approaches are very likely to be extended to this range. 
It would be interesting that in Budney--Gabai and Watanabe's approach, the parameter space of $S^1$ of $D^{j+1}\times S^1$ plays an essential role, while our approach is also applicable to higher codimensions $n-j \geq 3$. One would be able to relate our results with their results by describing our cycles in terms of Goussarov and Habiro's claspers \cite{GGP, Hab} and their higher-dimensional analogs by Watanabe \cite{Wat 1, Wat 3, Wat 4, Wat 5}.

Let us explain how Theorem \ref{main theorem on general cases} and Corollary \ref{corollarymaintheorem} are obtained. The map $Pair$ is induced by the pairing between our cycles and the modified configuration space integrals $\overline{I}$. (Recall that $DGC_{n,j}$ is quasi-isomorphic to $HGC_{n,j}$.)
Similarly to \cite{Yos 1}, the cocycle-cycle pairing is reduced to pairing between graphs and chord diagrams. We call this argument \textit{counting formula} (Theorem \ref{original counting formula}). Using this formula, we can show that the value of the cycle $d(\Theta(p,q,r))$ only depends on the coefficient $w(\Theta(p,q,r))$ of the graph $\Theta(p,q,r)$ in the given graph cocycle. Then, we can show the map $Pair$ is non-degenerate. Recall that $\mathcal{H}_{n,j} (g=2)$ gives, thanks to a replacement, a subspace of $\pi_{\ast}((\overline{\mathcal{K}}_{j+2, j})_{\iota})$. Since $H^{top}((HGC_{j+2,j}(g=2))$ is infinite-dimensional and is concentrated in degree $j-1$, we have Cororally \ref{corollarymaintheorem}. The case $g=3$ works similarly. 

Our approach is very likely to show the non-finite generation of the $(g-1)(j-1)$th homotopy group of $\text{Emb}_{\partial}(D^j, D^{j+2})_{\iota}$ by using $g$-loop ($g\geq 2$) hairy graphs. (Recall that the $1$-loop part detects infinitely many ribbon $j$-knots $D^j \rightarrow D^{j+2}$ \cite{Wat 2, SW}.) One thing to address is to replace the parameter space $(S^{j-1})^{\times k}$ to the sphere $S^{(j-1)k}$, which would be realized when our cycles are described by claspers (see Corrigendum of \cite{Wat 3}). Another thing to do is to give a lower bound of $H^{top}(HGC_{n,j}(g))$.

\section*{}
This paper is organized as follows. 
In Section \ref{Preliminaries for construction of cycles}, we recall the notion of ribbon presentations, planetary systems and chord diagrams.  In Section \ref{Construction of general $2$-loop and $3$-loop cycles}, we give general $2$-loop  cycles $d(\Theta(p,q,r))$ from the ribbon presentations $P(\Theta(p,q,r))$ associated with the $2$-loop hairy graphs $\Theta(p,q,r)$. Then, we show some properties of $P(\Theta(p,q,r))$ so that the $2$-loop cycles are taken from the unknot component. At the end of Section \ref{Construction of general $2$-loop and $3$-loop cycles}, we give the class of $2$-loop cycles $\mathcal{H}_{n,j}(g=2)$. We also construct $3$-loop cycles. 
 
In Section \ref{Pairing for general cases}, we first recall the counting formula for configuration space integrals in \cite{Yos 1} and see the formula is applicable to the modified configuration space integrals. Then, we show the non-triviality of the $2$-loop  and $3$-loop (co)cycles introduced in Section \ref{Construction of general $2$-loop and $3$-loop cycles}.

\section*{Acknowledgement}
This paper is based on a part of the authour's PhD thesis. 
The author is deeply grateful to Victor Turchin, Tadayuki Watanabe and Keiichi Sakai for helpful and motivating discussions. 
This research was supported by Forefront Physics and Mathematics Program to Drive Transformation (FoPM), a World-leading Innovative Graduate Study (WINGS) Program, The University of Tokyo. This work was also supported by JSPS Grant-in-Aid for JSPS Fellows Grant Number 24KJ0565.

\tableofcontents

\section{Preliminaries for construction of cycles}
\label{Preliminaries for construction of cycles}

For convenience, we briefly recall the notion of ribbon presentations, planetary systems, chord diagrams on oriented lines, which are introduced in \cite{Yos 1}. 
We see that each one of the three recovers the other two. Readers who are familiar with ribbon presentations and their perturbation can skip reading this section. 

\subsection{The space of long embeddings}

\begin{definition}
A \textit{long embedding} is an embedding $\mathbb{R}^j \rightarrow \mathbb{R}^n$ that coincides with the standard linear embedding $\iota: \mathbb{R}^j \subset \mathbb{R}^n$ outside a disk in $\mathbb{R}^j$. We write $\mathcal{K}_{n,j} = \text{Emb}(\mathbb{R}^j, \mathbb{R}^n)$ for the space of long embeddings equipped with the induced topology from the $C^{\infty}$ topology. This space $\mathcal{K}_{n,j}$ is weakly equivalent to the space of embedding of disks $\text{Emb}(D^j, D^n)$. 
We define the space of \textit{long immersions} $\text{Imm}(\mathbb{R}^j, \mathbb{R}^n)$ similarly. 
\end{definition}

\begin{definition}
\label{embeddings modulo immersions}
We write $\overline{\mathcal{K}}_{n,j} = \overline{\text{Emb}}(\mathbb{R}^j, \mathbb{R}^n)$ for the homotopy fiber of the inclusion
\[
\mathcal{K}_{n,j} = \text{Emb}(\mathbb{R}^j, \mathbb{R}^n) \hookrightarrow \text{Imm}(\mathbb{R}^j, \mathbb{R}^n)
\]
at the standard linear embedding $\iota: \mathbb{R}^j \subset \mathbb{R}^n$. This space is called \textit{the space of long embeddings modulo immersions}. An element of $\overline{\mathcal{K}}_{n,j}$ is a one-parameter family $\{\overline{K}_t\}_{t \in [0,1]}$ of long immersions which satisfies $\overline{K}_0\in \mathcal{K}_{n,j}$ and $\overline{K}_1 =~\iota$. 
\end{definition}

\begin{notation}
We write $\iota$ for the standard linear embedding $\mathbb{R}^j \subset \mathbb{R}^n$. Abusing notation, we also write $\iota$ for the trivial one-parameter family of the standard linear embedding $\mathbb{R}^j \subset \mathbb{R}^n$.
\end{notation}

\subsection{Ribbon presentations and long embeddigs associated with them}
\label{Ribbon presentations with more than one node}

\begin{definition}[Ribbon presenations \cite{HKS, HS}]
\label{general def of ribbon presentation}
A  \textit{ribbon presentation} $P = \mathcal{D} \cup \mathcal{B}$ of order $k$ is a based oriented, immersed 2-disk in $\mathbb{R}^{3}$, where
\begin{itemize}
\item $\mathcal{D} = D_0 \cup D_1 \cup \dots \cup D_k$ are disjoint $(k+1)$ disks,
\item $\mathcal{B}=  B_1 \cup B_2 \cup \dots \cup B_k$ are disjoint $k$ bands ($B_i \approx I \times I$). 
\end{itemize}
Each band connects two different disks, and along the way it can intersect transversally with the interior of disks. We put the base point $\ast$ on the boundary of $D_0$ (but not on the boundaries of $B_i$).
\end{definition}

\begin{notation}\cite[Definition 3.1]{HS}
\label{nodeleaf}
An intersection of a band and a disk is called a \textit{crossing}. A disk without intersecting bands is called a \textit{node}.  A disk is called a \textit{leaf} if it has at least one intersecting band and it is incident to exactly one band. We often draw a grey disk for a node except for the based disk $D_0$. 
\end{notation}

Near the crossing of a band and a disk, they look, using a local coordinate, like
\begin{align*}
B&= \{(x_1, x_2, x_3) \in \mathbb{R}^3\,|\,|x_1| \leq \frac{1}{2}, |x_2| < 3, x_3=0\},\\
D&= \{(x_1, x_2, x_3) \in \mathbb{R}^3\,|\,|x_1|^2 + |x_3|^2 \leq 1, x_2=0\}.
\end{align*}

\begin{notation}[Orientation of a crossing]
Since a ribbon presentation is an oriented disk, each disk $D_j$ is oriented. Orient the core of each band $B_i$ so that it goes from a leaf to the based disk $D_0$. 
Then the crossing of the band with a disk $D_j$ is called positive (resp. negative) if the core of the band gives the oriented normal vector (resp. minus of the oriented normal vector) of $D_j$ in $\mathbb{R}^3$.
\end{notation}

In \cite{HKS, HS}, Habiro and Shima introduced moves of ribbon presentations that do not change the isotopy class of the ribbon $2$-knot represented by the presentation. We can observe that the same is true for the ribbon $n$-knot represented by the presentation. The following are examples of the moves.

\begin{notation}\cite[Definition 3.2]{HS}
We define $S1$, $S2$, $S4$, $S7$ moves as in Figure \ref{exofmoves}.
\begin{figure}[htpb]
  \centering
    \includegraphics [width =12cm] {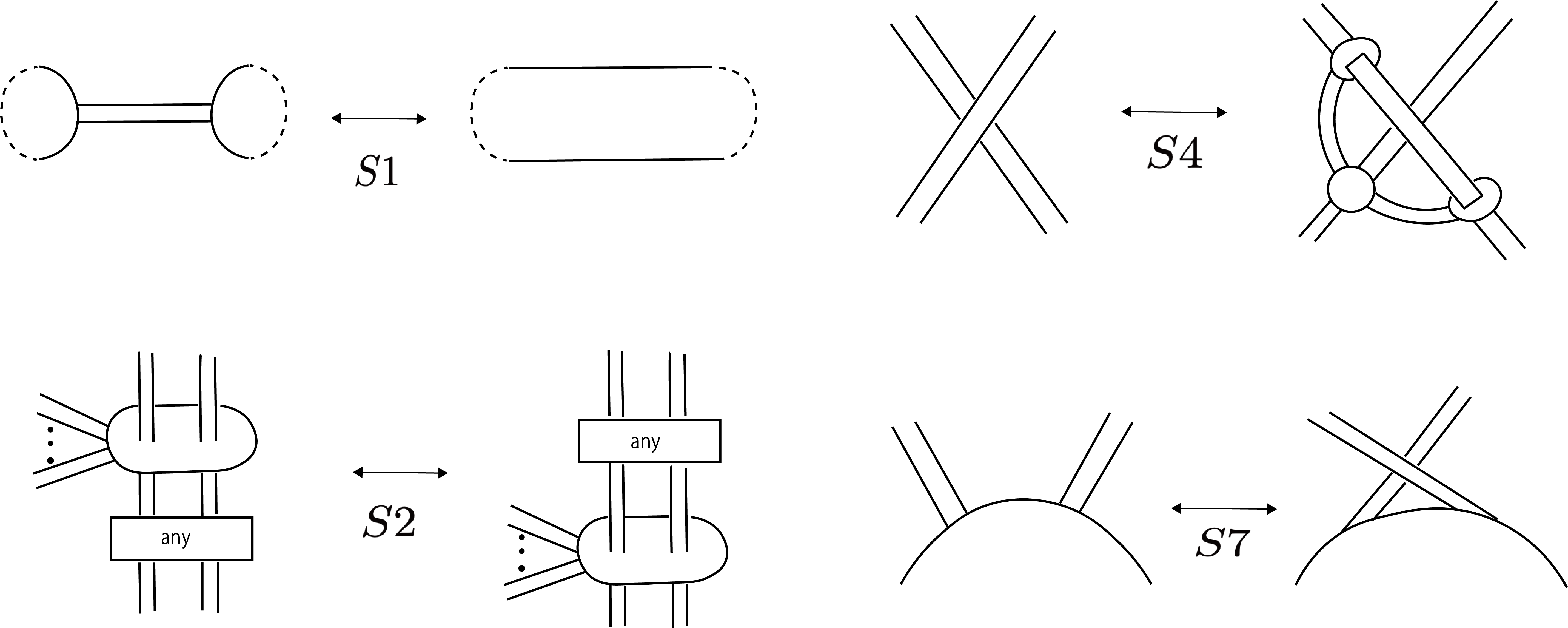}
    \caption{Example of moves of ribbon presentations}
    \label{exofmoves}
\end{figure}
\end{notation}

\begin{comment}
\begin{notation}
We define $S1$ and $S4$ moves as in Figure \ref{S1S4}.
\begin{figure}[htpb]
\centering
\includegraphics[width = 8cm, trim=0 0 0 0]{S1S4.png}
\caption{$S1$ and $S4$ moves}
\label{S1S4}
\centering
\end{figure}
\end{notation}
\end{comment}

\begin{notation}
When $n-j=2$, we sometimes encounter a situation where we want to distinguish two types of crossings: crossings to which we perform perturbations defined in subsection  \ref{Perturbation of crossings}, and crossings to which we don't perform perturbations. We often put the label $\star$ to intersections to which we perform perturbations. 
\end{notation}

\begin{definition}\cite[Definition 4.2] {SW}
We define a \textit{ribbon $(j+1)$-disk} $V_P$ by 
\[
V_P = \left(\mathcal{B} \times [-\frac{1}{4}, \frac{1}{4}]^{j-1} \right) \bigcup  \left(\mathcal{D} \times [-\frac{1}{2}, \frac{1}{2}]^{j-1}\right) \subset \mathbb{R}^3 \times \mathbf{0} \times  \mathbb{R}^{j-1}.
\]
Note that the thicknesses of bands and that of disks are different.
\end{definition}

\begin{definition}
An embedding $\varphi_k : \mathbb{R}^j \rightarrow \mathbb{R}^n$ is defined by the connected sum $\partial V_P \# \iota(\mathbb{R}^j)$ at the base point, after the smoothing of corners of $V_P$. Here, the parametrization of the embedding is given by using the canonical path to the standard immersion in $\text{Imm}(\mathbb{R}^j, \mathbb{R}^n)$. See Definition \ref{path to standard immersion}
\end{definition}

\begin{rem}
Although $\mathcal{B} \times [-\frac{1}{4}, \frac{1}{4}]^{j-1}$ and $\mathcal{D} \times [-\frac{1}{2}, \frac{1}{2}]^{j-1}$ intersect, their boundaries do not intersect:
\begin{align*}
\partial(\mathcal{B} \times [-\frac{1}{4}, \frac{1}{4}]^{j-1}) &= \left(\partial\mathcal{B} \times [-\frac{1}{4}, \frac{1}{4}]^{j-1}\right) \cup \left(\mathcal{B} \times \partial [-\frac{1}{4}, \frac{1}{4}]^{j-1}\right) \\ 
&=   \{(x_1, x_2, x_3) \in \mathbb{R}^3\,|\,|x_1| = \frac{1}{2}, |x_2| < 3, x_3=0\} \times  [-\frac{1}{4}, \frac{1}{4}]^{j-1} \\
&\cup \{(x_1, x_2, x_3) \in \mathbb{R}^3\,|\,|x_1| \leq \frac{1}{2}, |x_2| < 3, x_3=0\} \times \partial [-\frac{1}{4}, \frac{1}{4}]^{j-1},
\end{align*}
\begin{align*}
\partial(\mathcal{D} \times [-\frac{1}{2}, \frac{1}{2}]^{j-1}) &= \left(\partial\mathcal{D} \times [-\frac{1}{2}, \frac{1}{2}]^{j-1}\right) \cup \left(\mathcal{D} \times \partial [-\frac{1}{2}, \frac{1}{2}]^{j-1}\right) \\ 
& = \{(x_1, x_2, x_3) \in \mathbb{R}^3\,|\,|x_1|^2 + |x_3|^2  = 1, x_2=0\} \times  [-\frac{1}{2}, \frac{1}{2}]^{j-1}  \\
&\cup \{(x_1, x_2, x_3) \in \mathbb{R}^3\,|\,|x_1|^2 + |x_3|^2 \leq 1, x_2=0\}  \times \partial [-\frac{1}{2}, \frac{1}{2}]^{j-1}.
\end{align*}
\end{rem}

\subsection{Cycles of $\overline{\mathcal{K}}_{n,j}$ obtained by perturbation of crossings of ribbon presentations}
\label{Perturbation of crossings}
We recall the notion of the \textit{perturbation} $\varphi^{\mathbf{v}}_k$ $(\mathbf{v} \in S^{n-j-2})$ of the  embedding $\varphi_k$. It is constructed from the perturbed ribbon presentation 
$P_{\mathbf{v}}$ of the original ribbon presentation $P$. 

Recall that the ribbon presentation $P$ is constructed in $\mathbb{R}^3 \times \mathbf{0} \times \mathbf{0} \subset \mathbb{R}^n$, and the thickened presentation $V_p$ is constructed in $\mathbb{R}^3 \times \mathbf{0} \times \mathbb{R}^{j-1}$. The perturbation is performed using the coordinates $(x_4, \dots, x_{n-j+1})$ of $\mathbb{R}^{n-j-2}$ and the coordinate $x_3$ of $\mathbb{R}^3$. 

\begin{notation}
We consider the $(n-j-2)$-dimensional sphere $S^{n-j-2}$ as
\[
S =S^{n-j-2}=\left\{(x_3, \dots, x_{n-j+1}) \in \mathbb{R}^{n-j-1}\, | \, (x_3-1)^2+ x_4^2+\dots+x^2_{n-j+1}=1\right\}.
\]
If $n-j-2=0$, $S$ is the set of two points $S^0 =\{x_3=0, x_3=2\}$.
\end{notation}

\begin{notation}
The band obtained by perturbing $B$ to $v\in S$ direction is written as $B(v)$. 
\[
B(v) = 
\begin{cases}
\left\{(x_1, x_2, \gamma(x_2)v) \in \mathbb{R}^2 \times \mathbb{R}^{n-j-1} \, | \, |x_1| \leq 1/2, |x_2|<3 \right\}.\\
\end{cases}
\]
Here $\gamma(y)$ is a test function defined as 
\[
\gamma(y)=
\begin{cases}
 \text{exp}(-y^2/\sqrt{9-y^2}) & (|y| \leq 3) \\
 0 & (|y| \geq 3)
\end{cases}.
\]
\end{notation}

\begin{definition}
We define $B_i(v_i) (v_i \in S) $ as the band in $\mathbb{R}^3 \times \mathbb{R}^{n-j-2} \times \mathbf{0} $ constructed from $B_i$ by replacing $B \times \mathbf{0}$ by $B(v_i)$.
\end{definition}

\begin{notation}
Let $\mathbf{v} = (v_1, \dots, v_k) \in  {(S^{n-j-2})}^k$. We define $\mathcal{B}_{\mathbf{v}}$ by 
\[
\mathcal{B}_{\mathbf{v}} =  B_1 (v_1) \cup B_2 (v_2) \cup \dots \cup B_k (v_k).
\]
The \textit{perturbation} $P_{\mathbf{v}} \subset \mathbb{R}^{n-j+1}$ of the ribbon presentation $P$ is defined by $P_{\mathbf{v}} = \mathcal{D} \cup \mathcal{B}_{\mathbf{v}} $. Set
\[
V_{P_{\mathbf{v}}} = \left(\mathcal{B}_{\mathbf{v}} \times [-\frac{1}{4}, \frac{1}{4}]^{j-1} \right) \bigcup  \left(\mathcal{D} \times [-\frac{1}{2}, \frac{1}{2}]^{j-1}\right) \subset \mathbb{R}^n. 
\]
\end{notation}

\begin{definition}
The \textit{perturbation} $\varphi^\mathbf{v}_k$ of $\varphi_k: \mathbb{R}^j \rightarrow  \mathbb{R}^j$ is defined by the connected sum $\partial V_{P_{\mathbf{v}}} \# \iota(\mathbb{R}^j)$. \end{definition}

\begin{notation}\cite [Definition 4.6] {SW}
The part of $\varphi^\mathbf{v}_k$ corresponding to a crossing of the ribbon presentation is also called a crossing, although it is not a crossing in a usual sense. 
\end{notation}

\begin{definition}
\label{path to standard immersion}
We define a cycle $\widetilde{c_k} : (S^{n-j-2})^k \longrightarrow \overline{\mathcal{K}}_{n,j}$ as follows. Consider the following sequential resolutions of crossings which are possible in the space of immersions.
 \begin{itemize}
\item [($m1$)] Pull the disk $D_j$ of each crossing to the $x_1$-direction so that any $D_j$ and $B_i(v_i)$ do not intersect.
\item [($m2$)] Pull back  $D_j$ and $B_i(v_i)$ to the based disk $D_0$.
\end{itemize}
This operation gives a path from $\varphi^\mathbf{v}_k$ to the trivially immersed $\mathbb{R}^j$. Using this path, we can equip each point of the submanifold $\varphi^\mathbf{v}_k$ with a coordinate. This path also gives a lift $\widetilde{\varphi^\mathbf{v}_k}$ of $\varphi^\mathbf{v}_k$ to $\overline{\mathcal{K}}_{n,j}$. We define the  cycle $\widetilde{c_k} : (S^{n-j-2})^k \longrightarrow \overline{\mathcal{K}}_{n,j}$ by $\mathbf{v} \mapsto \widetilde{\varphi^\mathbf{v}_k}$.
\end{definition}

%------------------------------------------------------------------------
%----------------------------------------------------------------------

\subsection{Planetary  systems and chord diagrams on oriented lines}
\begin{definition}\cite{Yos 1}
A set of \textit{planetary systems} $\mathcal{S}$ on $\mathbb{R}^j$ of order $k$ consists of the following data.
\begin{itemize}
\item Points $a_i$ at $(i,0,\dots,0) \in \mathbb{R}^j \quad (i=1, \dots, s)$.
\item $(j-1)$-dimensional spheres $S(L^l_i, a_i)$\quad$(l=1, \dots, t_i)$ with radius $L^j_i$ centered at $a_i$ $(i = 1,\dots, s)$.
\end{itemize}
$s$ and $t_i\ (i=1, \dots, s)$ are non-negative integers such that the total number of points and spheres is $2k$. 
$L^j_i$ are sufficiently small real numbers and satisfy $L^j_i > L^k_i $ if $j > k$. We call points of (1)  \textit{fixed stars} and  call spheres of (2)  \textit{planetary orbits}. Each fixed star forms one planetary system that consists of $t_i$ planetary orbits.
\end{definition}

\begin{definition}%[Choice of pairings]
An \textit{ordered pairing} $\{p_i\}_{i =1, \dots, k}$ for $\mathcal{S}$ is a pairing among the set of fixed stars and planetary orbits.
Fixed stars must be paired with planetary orbits. Planetary orbits may be paired with other planetary orbits (\textit{orbit-orbit pairing}). The order of two elements of each pair is taken so that the fixed star is the first. 
\end{definition}

A chord diagram on oriented lines can describe a set of planetary systems and its ordered pairing.
\begin{definition}\cite{Yos 1}
A \textit{chord diagram on $s$ oriented lines} of order $k$ consists of the following data.
\begin{itemize}
\item  Integers $t_i \geq 0$ ($i = 1, \dots, s$) such that $\sum_{i=1}^s (t_i +1) = 2k$. We draw $\{x = t_i\} \subset \mathbb{R}^2$ by a blue oriented line. 
\item  An ordered pairing $\{p_i\}_{i=1,\dots,k}$ among $2k$ points $V = \{(i,l)\in \mathbb{Z}^2\, |\, 1\leq i \leq s, 0\leq l \leq t_i\}$, which is drawn by a chord. 
\end{itemize}
Note that the set of chords is ordered, and each chord is oriented. We can order the set of vertices of a chord diagram canonically using the ordering and orientations of chords: The initial vertex of the $i$th chord is labeled by $(2i-1)$. The end vertex of the $i$th chord is labeled by $2i$. 

We always assume that at least one of the two ends of each chord is not on the $x$-axis and that any point on the $x$-axis must be the initial point of some chord. 
\end{definition}

\begin{example}
\chorddiagramb{} is an example of a chord diagram on oriented lines ($s=2, t_1 = t_2 =2$). We write $O$ for the points on the $x$-axis.
\end{example} 

A set of planetary systems and its ordered pairing determines a chord diagram on oriented lines in an expected way. Namely, points on the $x$-axis are fixed stars. Other points are planetary orbits. Conversely, a chord diagram on oriented lines determines a set of planetary systems (up to rescalings) and its ordered pairing.

\begin{definition}[The ribbon presentation associated with a chord diagram with signs]
\label{ribbon presentation from diagram}
Consider a chord diagram on oriented lines with each chord given a sign $+$ or $-$. To this chord diagram, we assign a ribbon presentation as follows.
Each oriented line corresponds to a sequence of bands and nodes that connects the based disk $D_0$ and a leaf. The origin of each oriented line corresponds to this leaf.  Each chord with the sign $+$ (or $-$) corresponds to a positive (or negative) crossing. 
The initial point of each chord corresponds to the disk (leaf) of the crossing. If this initial point is not the origin of the oriented line, this leaf is connected to a node by a band. The target point of each chord corresponds to a small segment of the band of the crossing. 
\end{definition}

%-----------------------------------------------------
%----------------------------------------------------

\section{Construction of $2$-loop and $3$-loop cycles of $\overline{\mathcal{K}}_{n,j}$}
\label{Construction of general $2$-loop and $3$-loop cycles}

Let $k\geq 1$, $p, r \geq 1,\ q \geq 0,\ p+q+r+1=k$. In this section, we construct  $(k(n-j-2)+(j-1))$-cycles
\[
d(\Theta(p,q,r)) : (S^{n-j-2})^k \times S^{j-1}  \rightarrow \overline{\mathcal{K}}_{n,j}
\]
by perturbation of ribbon presentations with one (gray) node. These cycles, after some modifications, generate the class  $\mathcal{H}_{n,j}(k,2)$. The cycle $d(\Theta(1,0,1))$ coincides with the simplest $2$-loop cycle introduced in \cite{Yos 1}. The general cycle $d(\Theta(p,q,r))$ is detected by the cocycle including the $2$-loop hairy graph $\Theta(p,q,r)$ whose three edges of $``\Theta"$ have $p$, $q$, $r$ hairs respectively. See Figure \ref{Thetapqr}.

\begin{figure}[htpb]
\centering
    \includegraphics [width =4cm] {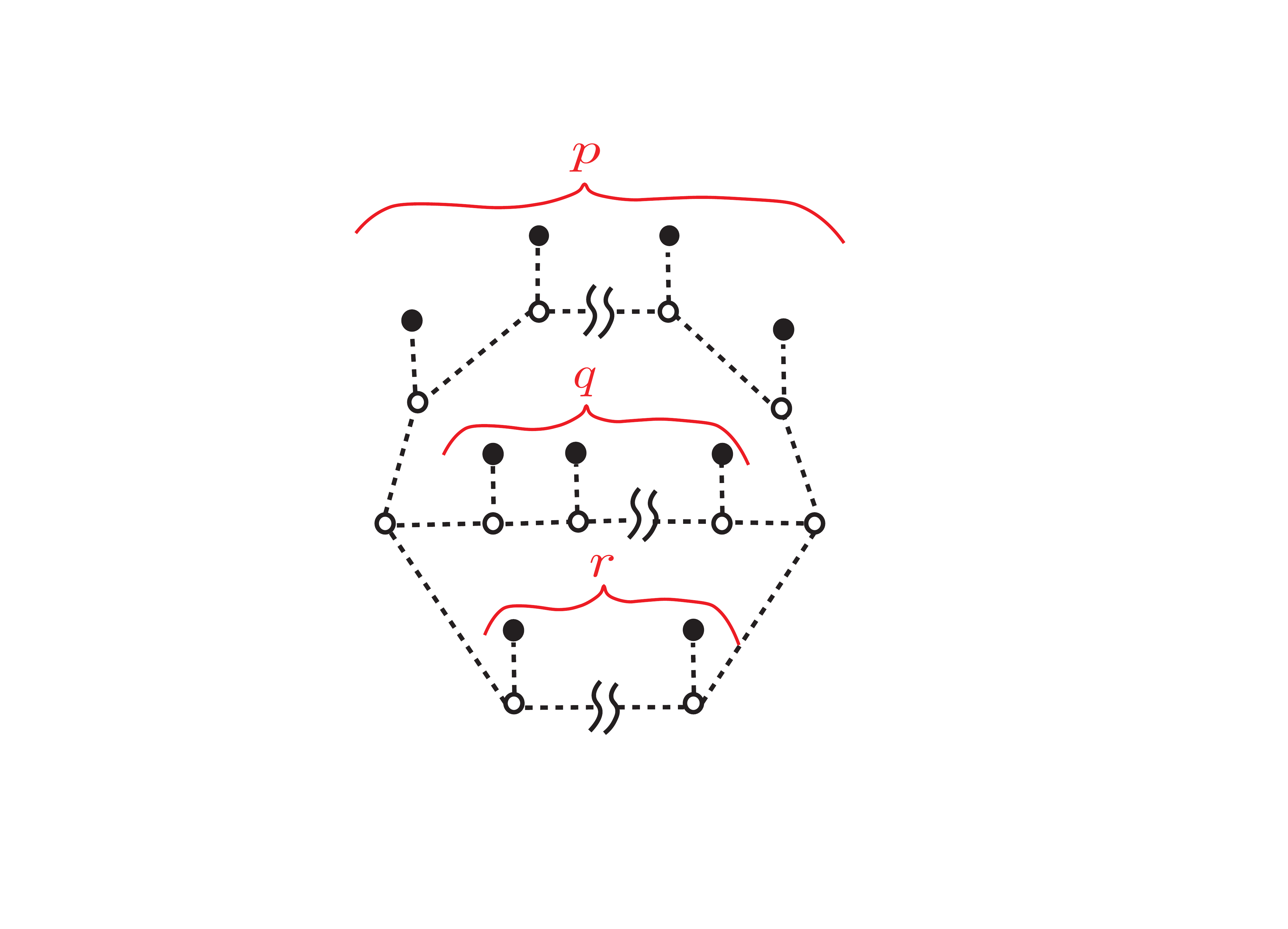}
   \caption{The hairy graph $\Theta(p,q,r)$}
    \label{Thetapqr}
 \end{figure}

We construct the cycles as follows. First, we give a chord diagram on oriented lines $D(\Theta(p,q,r))$ from the hairy graph $\Theta(p,q,r)$. From this diagram, we give a ribbon presentation  $P(\Theta(p,q,r))$. Such a ribbon presentation gives $S^{j-1} \times (S^{n-j-2})^{\times k}$ cycle of embedded submanifolds ($\approx \mathbb{R}^j$) in $\mathbb{R}^n$. Then the desired cycle $d(\Theta(p,q,r)): S^{j-1} \times (S^{n-j-2})^{\times k} \rightarrow \overline{\mathcal{K}}_{n,j}$ is obtained by giving a path of immersions to the trivial immersion. The path also gives the parameterization of the embedded submanifolds.

As mentioned before, our construction is analogous to the construction of \textit{wheel-like cycles} given by Sakai and Watanabe \cite{Wat 2, SW}, 
\[
\tilde{c}_k : (S^{n-j-2})^{\times k} \rightarrow \overline{\mathcal{K}}_{n,j},
\]
which are detected by $1$-loop graphs. The additional parameter on $S^{j-1}$ of $d(\Theta(p,q,r))$ arises from the \textit{node} (see Notation \ref{nodeleaf}), which only our ribbon presentations have. 

We also construct $(k(n-j-2)+2(j-1))$-cycles
\[
d(\ctext{Y}(p_i)_{i = 1, \dots, 6}) : (S^{n-j-2})^k \times (S^{j-1})^2  \rightarrow \overline{\mathcal{K}}_{n,j} \quad (k = 2 + \sum_{i=1}^6 p_i)
\]
by perturbation of ribbon presentations with two nodes, which are associated with the $3$-loop hairy graph $\ctext{Y}((p_i)_{i = 1, \dots, 6})$ (see the $3$-loop graph in subsection \ref{3loopcycles}). These cycles generate the class  $\mathcal{H}_{n,j}(k,3)$. 
Though we focus on $2$ or $3$-loop hairy graphs, this construction of cycles is very likely to be generalized to hairy graphs with an arbitrary number of loops.

\subsection{The chord diagrams $D(\Theta(p,q,r))$}
The chord diagram $D(\Theta(p,q,r))$ on oriented lines is obtained from the hairy graph $\Theta(p,q,r)$ as follows. See Figure \ref{figofdiagram}. 

\begin{itemize}
\item First, orient the three edges of the graph $\Theta$ as
\[
 \begin{tikzpicture}[x=2.5pt,y=2.5pt,yscale=0.15,xscale=0.15, baseline=-3pt] 

%Thetaマーク
\draw (0,0) circle (100); 

\draw [-Stealth] (100,1) -- (100,0); 
\draw [-Stealth] (-100,-1) -- (-100,0); 
\draw [-Stealth] (100,0) -- (-100,0);

\draw(-130,0) node {$(I)$};
\draw (130,0) node {$(II)$};

\draw (0,120) node {$p$};
\draw (0,20) node {$q$};
\draw (0,-90) node {$r$};
\end{tikzpicture}
\]
The left vertex of $\Theta$ has two incoming edges while the right vertex has one incoming edge. We call the first vertex type (I) and the second vertex type (II). \footnote{The teminology type (I) and type (II) is introduced in \cite{Wat 4}, though convention is different. }

\item Replace each hair with the oriented line with two open chords
\begin{tikzpicture}[x=0.75pt,y=0.75pt,yscale=0.15,xscale=0.15, baseline=-3pt] 

%solid edge
\draw  [-Stealth, line width =1pt, color={rgb, 255: red, 0;  green, 0; blue, 255}] (0,0)--(500,0);

%dashed edges
\draw  [-Stealth, dash pattern = on 2pt off 3 pt, line width =1pt] (100,0)--(100,150);
\draw  [Stealth-, dash pattern = on 2pt off 3 pt, line width =1pt] (300,0)--(300,150);

%black vertices
\draw  [fill={rgb, 255:red, 0; green, 0; blue, 0 }  ,fill opacity=1 ]  (100,0) circle (20);
\draw  [fill={rgb, 255:red, 0; green, 0; blue, 0 }  ,fill opacity=1 ] (300,0) circle (20);
\end{tikzpicture}.
Exceptionally replace the leftmost (resp. rightmost) hair of the upper (resp. lower) edge of $\Theta(p,q,r)$ with 
\begin{tikzpicture}[x=0.75pt,y=0.75pt,yscale=0.15,xscale=0.15, baseline=-3pt] 

%solid edges
\draw  [-Stealth, line width =1pt, color ={rgb, 225 :red, 0; green, 0; blue, 225 } ] (0,0)--(700,0);

%dashed edges
\draw  [-Stealth, dash pattern = on 2pt off 3 pt, line width =1pt] (100,0)--(100,150);
\draw  [-Stealth, dash pattern = on 2pt off 3 pt, line width =1pt] (300,0)--(300,150);
\draw  [Stealth-, dash pattern = on 2pt off 3 pt, line width =1pt] (500,0)--(500,150);

%black vertices
\draw  [fill={rgb, 255:red, 0; green, 0; blue, 0 }  ,fill opacity=1 ]  (100,0) circle (20);
\draw  [fill={rgb, 255:red, 0; green, 0; blue, 0 }  ,fill opacity=1 ] (300,0) circle (20);
\draw  [fill={rgb, 255:red, 0; green, 0; blue, 0 }  ,fill opacity=1 ] (500,0) circle (20);
\end{tikzpicture}
\quad (\text{resp.}
\begin{tikzpicture}[x=0.75pt,y=0.75pt,yscale=0.15,xscale=0.15, baseline=-3pt] 

%solid edges
\draw  [-Stealth, line width =1pt, color ={rgb, 255: red, 0; green, 0; blue, 255 } ] (0,0)--(700,0);

%dashed edges
\draw  [-Stealth, dash pattern = on 2pt off 3 pt, line width =1pt] (100,0)--(100,150);
\draw  [Stealth-, dash pattern = on 2pt off 3 pt, line width =1pt] (300,0)--(300,150);
\draw  [Stealth-, dash pattern = on 2pt off 3 pt, line width =1pt] (500,0)--(500,150);

%black vertices
\draw  [fill={rgb, 255:red, 0; green, 0; blue, 0 }  ,fill opacity=1 ]  (100,0) circle (20);
\draw  [fill={rgb, 255:red, 0; green, 0; blue, 0 }  ,fill opacity=1 ] (300,0) circle (20);
\draw  [fill={rgb, 255:red, 0; green, 0; blue, 0 }  ,fill opacity=1 ] (500,0) circle (20);
\end{tikzpicture}).

\item Finally connect ends of chords as expected from the graph $\Theta(p,q,r)$. There are two ways to connect the two outgoing open chords of 
\begin{tikzpicture}[x=0.75pt,y=0.75pt,yscale=0.15,xscale=0.15, baseline=-3pt] 

%solid edges
\draw  [-Stealth, line width =1pt, color ={rgb, 225 :red, 0; green, 0; blue, 225 } ] (0,0)--(700,0);

%dashed edges
\draw  [-Stealth, dash pattern = on 2pt off 3 pt, line width =1pt] (100,0)--(100,150);
\draw  [-Stealth, dash pattern = on 2pt off 3 pt, line width =1pt] (300,0)--(300,150);
\draw  [Stealth-, dash pattern = on 2pt off 3 pt, line width =1pt] (500,0)--(500,150);

%black vertices
\draw  [fill={rgb, 255:red, 0; green, 0; blue, 0 }  ,fill opacity=1 ]  (100,0) circle (20);
\draw  [fill={rgb, 255:red, 0; green, 0; blue, 0 }  ,fill opacity=1 ] (300,0) circle (20);
\draw  [fill={rgb, 255:red, 0; green, 0; blue, 0 }  ,fill opacity=1 ] (500,0) circle (20);
\end{tikzpicture}
to the other two ingoing open chords. We choose the one as in Figure \ref{figofdiagram}. %The choice is arbitrary, but
\item We order the chords so that the upper, and tail-side if on the same level, vertices have outgoing chords with smaller labels. We order the oriented lines by a similar rule. 
\end{itemize}

\begin{figure}[htpb]
\centering
    \includegraphics [width =11cm] {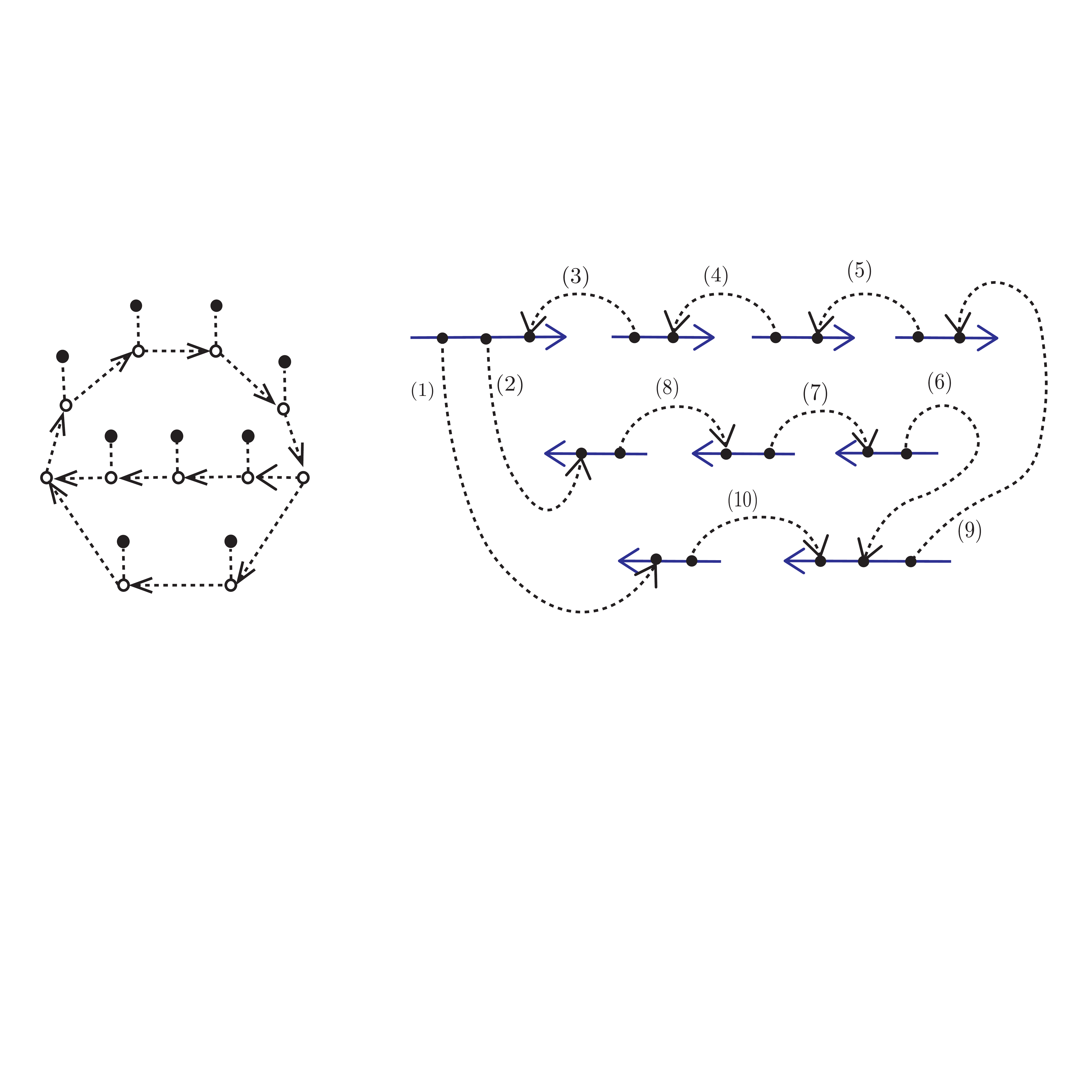}
    \caption{The diagram $D(\Theta(4,3,2))$)}
    \label{figofdiagram}
 \end{figure}
 
 Note that the diagram $D(\Theta(1,0,1))$ equals to the diagram  $C_1 = \chorddiagramb{}$ 
 
 \begin{rem}
In the construction, we implicitly use a labeling of the graph $\Theta$. We can construct a cycle even when we change this label so that $\Theta$ has one type-(I) vertex and one type-(II) vertex. 
We can show that the result of pairing does not depend,  up to signs, on this choice of labeling.  
 \end{rem}
 
 \begin{rem}
The chord diagram $D(\Theta(p,q,r))$ has $p+q+r$ oriented lines and $k = p+q+r+1$ chords. As we recalled in Section \ref{Preliminaries for construction of cycles},  this chord diagram corresponds to a set of planetary systems which has $p+q+r$ planetary systems. In this set, there are two planetary systems which have two planetary orbits. See Figure~\ref{planetarysystem2loop}. 
 \end{rem}

%cycle psi

\begin{figure}[htpb]
\begin{center}
\tikzset{every picture/.style={line width=1pt, xscale=0.3pt, yscale = 0.3pt}}  
\begin{tikzpicture}

\draw (0,-3) rectangle (48, 8);

\draw (3,3) circle(2) [line width = 2pt] [color = {rgb, 255:red, 0; green, 0; blue, 0 }, fill opacity =1.0];
\draw (3,3) circle(1.5) [line width = 2pt] [color = {rgb, 255:red, 0; green, 0; blue, 70 }, fill opacity =1.0];
\draw (4.5, 3) circle (0.2) [fill={rgb, 255:red, 0; green, 0; blue, 0}, fill opacity =1.0] ; 
\draw (3,3) circle(0.3) [fill={rgb, 255:red, 0; green, 0; blue, 0}, fill opacity =1.0] ; 

\draw (9,3) circle(0.3) [fill={rgb, 255:red, 0; green, 0; blue, 0}, fill opacity =1.0]  ; 
\draw (9,3) circle(1.5) [line width = 2pt] [color = {rgb, 255:red, 0; green, 0; blue, 70 }, fill opacity =1.0];

\draw (12,3) node {$\cdots$};

\draw (15,3) circle(0.3) [fill={rgb, 255:red, 0; green, 0; blue, 0}, fill opacity =1.0]  ; 
\draw (15,3) circle(1.5) [line width = 2pt] [color = {rgb, 255:red, 0; green, 0; blue, 70 }, fill opacity =1.0];

\draw (3,0)..controls (9,-1).. (15,0);
\draw (9,-2) node {$p$};

\draw (21,3) circle(0.3) [fill={rgb, 255:red, 0; green, 0; blue, 0}, fill opacity =1.0]  ; 
\draw (21,3) circle(1.5) [line width = 2pt] [color = {rgb, 255:red, 0; green, 0; blue, 70 }, fill opacity =1.0];

\draw (24,3) node {$\cdots$};

\draw (27,3) circle(0.3) [fill={rgb, 255:red, 0; green, 0; blue, 0}, fill opacity =1.0]  ; 
\draw (27,3) circle(1.5) [line width = 2pt] [color = {rgb, 255:red, 0; green, 0; blue, 70 }, fill opacity =1.0];

\draw (21,0)..controls (24,-1).. (27,0);
\draw (24,-2) node {$q$};

\draw (33,3) circle(0.3) [fill={rgb, 255:red, 0; green, 0; blue, 0}, fill opacity =1.0]  ; 
\draw (33,3) circle(1.5) [line width = 2pt] [color = {rgb, 255:red, 0; green, 0; blue, 70 }, fill opacity =1.0];
\draw (33,3) circle(2) [line width = 2pt] [color = {rgb, 255:red, 0; green, 0; blue, 70 }, fill opacity =1.0];

\draw (37,3) node {$\cdots$};

\draw (39,3) circle(0.3) [fill={rgb, 255:red, 0; green, 0; blue, 0}, fill opacity =1.0]  ; 
\draw (39,3) circle(1.5) [line width = 2pt] [color = {rgb, 255:red, 0; green, 0; blue, 70 }, fill opacity =1.0];

\draw (45,3) circle(0.3) [fill={rgb, 255:red, 0; green, 0; blue, 0}, fill opacity =1.0]  ; 
\draw (45,3) circle(1.5) [line width = 2pt] [color = {rgb, 255:red, 0; green, 0; blue, 70 }, fill opacity =1.0];

\draw (33,0)..controls (39,-1).. (45,0);
\draw (39,-2) node {$r$};

\end{tikzpicture}

\caption{The set of planetary systems corresponding to $\Theta(p,q,r)$}
\label{planetarysystem2loop}
\end{center}
\end{figure}
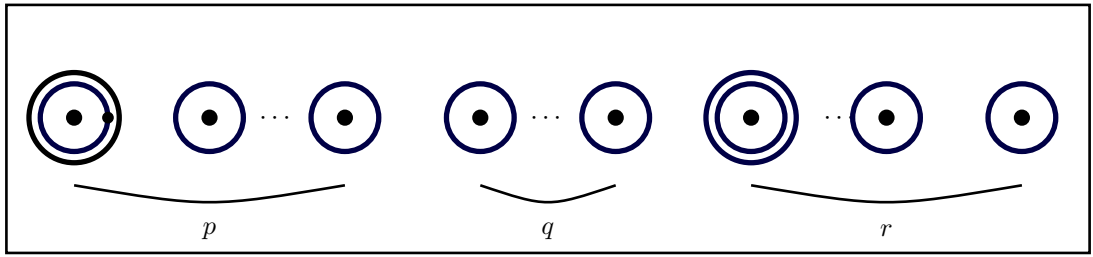

\subsection{The ribbon presentations $P(\Theta(p,q,r))$}
We see how a ribbon presentation $P(\Theta(p,q,r))$ with one node is obtained from the diagram $D(\Theta(p,q,r))$. This construction is already explained in Definition \ref{ribbon presentation from diagram}. 

\begin{itemize}
\item Replace 
\begin{tikzpicture}[x=0.75pt,y=0.75pt,yscale=0.15pt, xscale=0.15pt, baseline= -3pt] 

%solid edges
\draw  [-Stealth, line width =1pt, color ={rgb, 255: red, 0; green, 0; blue, 255 }] (0,0)--(500,0);

%dashed edges
\draw  [-Stealth, dash pattern = on 2pt off 3 pt, line width =1pt] (100,0)--(100,150);
\draw  [Stealth-, dash pattern = on 2pt off 3 pt, line width =1pt] (300,0)--(300,150);

%black vertices
\draw  [fill={rgb, 255:red, 0; green, 0; blue, 0 }  ,fill opacity=1 ]  (100,0) circle (20);
\draw  [fill={rgb, 255:red, 0; green, 0; blue, 0 }  ,fill opacity=1 ] (300,0) circle (20);
\end{tikzpicture}
and
\begin{tikzpicture}[x=0.75pt,y=0.75pt,yscale=0.15,xscale=0.15, baseline=-3pt] 

%solid edges
\draw  [-Stealth, line width =1pt, color ={rgb, 255: red, 0; green, 0; blue, 255 }] (0,0)--(700,0);

%dashed edges
\draw  [-Stealth, dash pattern = on 2pt off 3 pt, line width =1pt] (100,0)--(100,150);
\draw  [Stealth-, dash pattern = on 2pt off 3 pt, line width =1pt] (300,0)--(300,150);
\draw  [Stealth-, dash pattern = on 2pt off 3 pt, line width =1pt] (500,0)--(500,150);

%black vertices
\draw  [fill={rgb, 255:red, 0; green, 0; blue, 0 }  ,fill opacity=1 ]  (100,0) circle (20);
\draw  [fill={rgb, 255:red, 0; green, 0; blue, 0 }  ,fill opacity=1 ] (300,0) circle (20);
\draw  [fill={rgb, 255:red, 0; green, 0; blue, 0 }  ,fill opacity=1 ] (500,0) circle (20);
\end{tikzpicture}
with 
\begin{tikzpicture}[x=0.75pt,y=0.75pt,yscale=0.1,xscale=0.1, baseline=-6pt] 

\draw  [line width =1pt] (35,0)--(300,0);
\draw  [line width =1pt] (35,-80)--(300,-80);

\draw  [line width =1pt]  (-30,-40) circle (80);

\end{tikzpicture},
where the vertex with an outgoing (resp. ingoing) open chord is replaced by a disk (resp. a segment of a band). Exceptionally, replace 
\begin{tikzpicture}[x=0.75pt,y=0.75pt,yscale=0.15,xscale=0.15, baseline=-3pt] 

%solid edges
\draw  [-Stealth, line width =1pt, color ={rgb, 255: red, 0; green, 0; blue, 255 }] (0,0)--(700,0);

%dashed edges
\draw  [-Stealth, dash pattern = on 2pt off 3 pt, line width =1pt] (100,0)--(100,150);
\draw  [-Stealth, dash pattern = on 2pt off 3 pt, line width =1pt] (300,0)--(300,150);
\draw  [Stealth-, dash pattern = on 2pt off 3 pt, line width =1pt] (500,0)--(500,150);

%black vertices
\draw  [fill={rgb, 255:red, 0; green, 0; blue, 0 }  ,fill opacity=1 ]  (100,0) circle (20);
\draw  [fill={rgb, 255:red, 0; green, 0; blue, 0 }  ,fill opacity=1 ] (300,0) circle (20);
\draw  [fill={rgb, 255:red, 0; green, 0; blue, 0 }  ,fill opacity=1 ] (500,0) circle (20);
\end{tikzpicture}, which has two outgoing open chords, with 
\begin{tikzpicture}[x=0.75pt,y=0.75pt,yscale=0.1,xscale=0.1, baseline=-6pt] 

%band
\draw  [line width =1pt] (45,120)--(200,20);
\draw  [line width =1pt] (40,60)--(190,-40);

\draw  [line width =1pt] (45,-200)--(200,-100);
\draw  [line width =1pt] (40,-140)--(190,-40);

\draw  [line width =1pt] (335,0)--(475,0);
\draw  [line width =1pt] (335,-80)--(475,-80);

%disk
\draw  [line width =1pt]  (-30,-180) circle (80);
\draw (-200, -180) node {$(2)$};
\draw  [line width =1pt]  (-30,100) circle (80);
\draw (-200, 100) node {$(1)$};
\draw  [line width =1pt, fill = gray]  (265,-40) circle (80);

\end{tikzpicture}. Again, the two vertices with an outgoing open chord are replaced by (white) disks, where the tail vertex is replaced with the disk with the label $(1)$. The vertex with an ingoing open chord is replaced by a segment of a band. 
The disk with three bands (drawn in gray) is the node of our ribbon presentation. 
\end{itemize}

\begin{itemize}
\item Intersect a disk with a band if they are connected by chords. Assign the label $\star$ to this crossing. This label is a sign of perturbation of a crossing which we later define. 

\item Connect the free ends of bands to the based disk. The order of bands is arbitrary (because of S7 move in Figure \ref{exofmoves}). This connection determines the orientations of crossings. We connect the ends to the based disk so that the orientation of each crossing is positive.
\end{itemize}

\begin{figure}[htpb]
\centering
    \includegraphics [width =7cm] {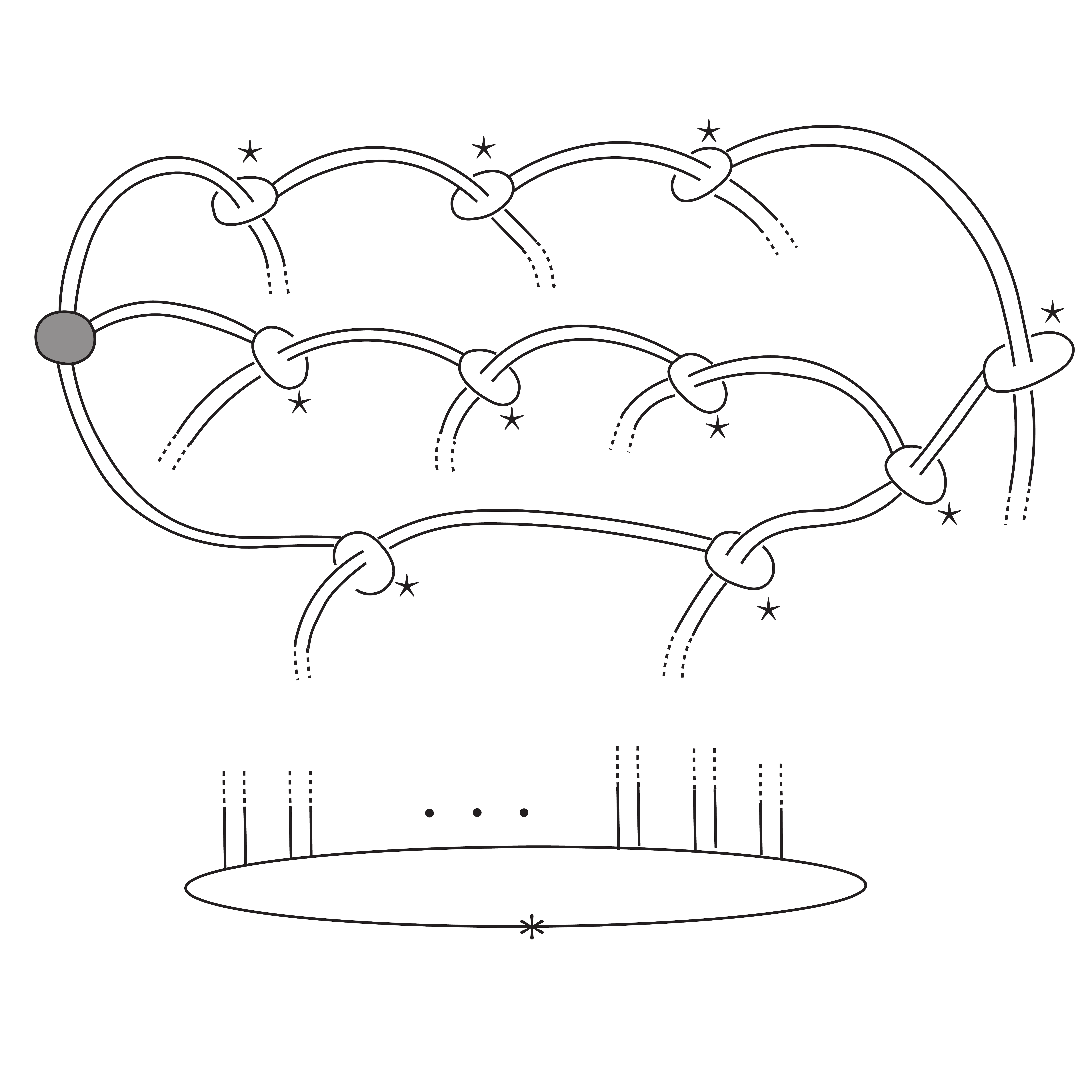}
    \caption{The ribbon presentation $P(\Theta(4,3,2))$)}
 \end{figure}
 
%Note that the ribbon presentation $P(\Theta(1,0,1))$ equals the ribbon presentation $P_1$ in Figure \ref{Ribbon presentations P2}. 

\subsection{The $2$-loop cycles $d(\Theta(p,q,r))$ of $\overline{\mathcal{K}}_{n,j}$}
Here, we construct cycles $d(\Theta(p,q,r)): (S^{n-j-2})^k \times S^{j-1} \rightarrow \overline{\mathcal{K}}_{n,j}$. The parameter space $(S^{n-j-2})^k$ arises from perturbation of crossings. For each parameter $\mathbf{v} = (v_1,\dots, v_k) \in (S^{n-j-2})^k$, we can give the new ribbon presentation $P_\mathbf{v}$ by 
\[
P_\mathbf{v} = \mathcal{D} \cup \mathcal{B}(\mathbf{v}) = \bigcup D_i \cup \bigcup B_j(v_1, v_2, \dots, v_k),
\]
where $B_j(v_1, v_2, \dots, v_k)$ is the band obtained by perturbating all crossings $c_k$ of $B_j$ to the direction $v_k$.  See Section \ref{Preliminaries for construction of cycles}. 

\begin{definition} 
The cycle $c(\Theta(p,q,r)): (S^{n-j-2})^k \rightarrow \mathcal{K}_{n,j}$ is defined by 
\[
\mathbf{v} \longmapsto \psi(P_{\mathbf{v}}) =  \partial V_{P_\mathbf{v}} \# \iota (\mathbb{R}^j).
\]
\end{definition} 

Recall that our ribbon presentation has a node which has two bands, say $B_i$ and $B_j$, connected to leaves 
\begin{tikzpicture}[x=0.75pt,y=0.75pt,yscale=0.1,xscale=0.1, baseline=-6pt] 

%band
\draw  [line width =1pt] (45,120)--(200,20);
\draw  [line width =1pt] (40,60)--(190,-40);

\draw  [line width =1pt] (45,-200)--(200,-100);
\draw  [line width =1pt] (40,-140)--(190,-40);

\draw  [line width =1pt] (335,0)--(475,0);
\draw  [line width =1pt] (335,-80)--(475,-80);

%disk
\draw  [line width =1pt]  (-30,-180) circle (80);
\draw (120, -240) node {$B_j$};
\draw  [line width =1pt]  (-30,100) circle (80);
\draw (120, 160) node {$B_i$};

\draw  [line width =1pt, fill = gray]  (265,-40) circle (80);

\end{tikzpicture}.
As a long embedding, this part corresponds to a punctured sphere with two tubes $\hat{B_i}$ and $\hat{B_j}$ attached. 
The additional $S^{j-1}$ family is given by moving  one tube ($\hat{B_j}$) around the other tube ($\hat{B_i}$). In terms of planetary systems in Figure \ref{planetarysystem2loop}, we turn the planet in the second orbit of the leftmost planetary system around the fixed star. See Figure \ref{additionalfamily}. Then we obtain a $(S^{n-j-2})^k \times S^{j-1}$ cycle of submanifolds in $\mathbb{R}^n$. 

Note that as (images of) immersions, there is a path to the standard immersion. Using this path, we obtain the desired cycle
\[
d(\Theta(p,q,r)) : (S^{n-j-2})^k \times S^{j-1}  \rightarrow \overline{\mathcal{K}}_{n,j}.
\]

\begin{figure}[htpb]
   \centering
    \includegraphics [width = 11cm] {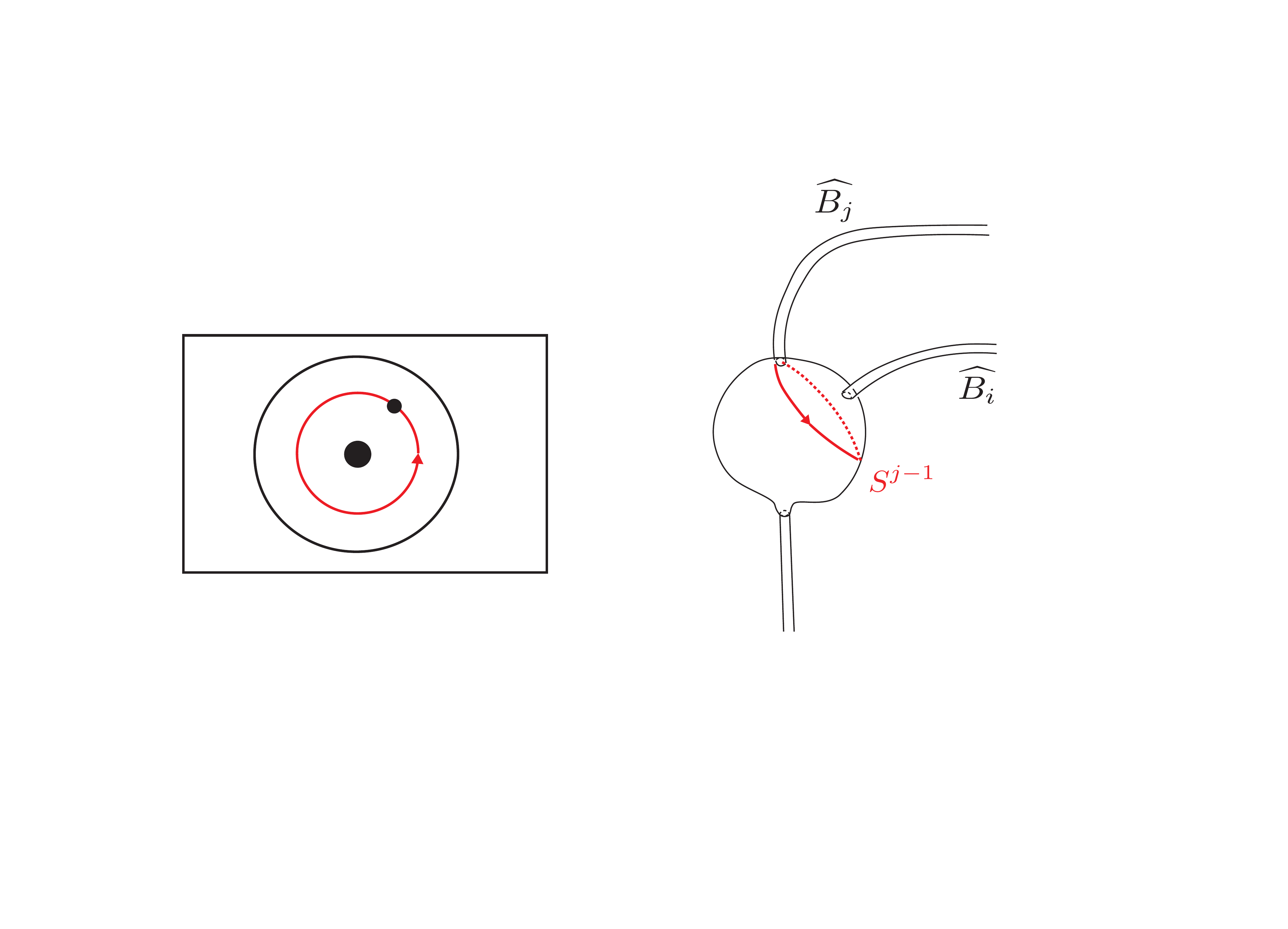}
    \caption{The additional $S^{j-1}$ family}
    \label{additionalfamily}
 \end{figure}

\subsection{Properties of the ribbon presentations $P(\Theta(p,q,r))$}
\label{Properties of ribbon presentation}
We want to take the cycles from the unknot component.  When $n-j \geq 3$, the cycles $d(\Theta(p,q,r))$ are already in the unknot component. 
When $n-j = 2$, we must make more efforts. 
We introduce an important property for this purpose and introduce a modified ribbon presentation $P^{\prime}(\Theta(p,q,r))$
 \begin{notation}
Let $\varepsilon_i = \pm 1$. 
Write $P(\Theta(p,q,r)) (\varepsilon_1, \varepsilon_2, \dots,\varepsilon_k)$ for the ribbon presentation obtained by changing
the $j$th crossing
\begin{tikzpicture}[x=0.5pt,y=0.5pt,yscale=0.1,xscale=0.1, baseline=-6pt] 
 \draw [line width =1pt, fill = white, fill opacity =1] (-100,0) rectangle (100,-300) ;
 \draw [line width =1pt, fill = white, fill opacity =1]  (0,0) circle (220 and 140);
 \draw [line width =1pt, fill = white, fill opacity =1] (-100,300) rectangle (100,0) ;
\draw (300, 0) node  {$\star$};
\end{tikzpicture}
to 
\begin{tikzpicture}[x=0.5pt,y=0.5pt,yscale=0.1,xscale=0.1, baseline=-6pt] 
 \draw [line width =1pt, fill = white, fill opacity =1] (-100,0) rectangle (100,-300) ;
 \draw [line width =1pt, fill = white, fill opacity =1]  (0,0) circle (220 and 140);
 \draw [line width =1pt, fill = white, fill opacity =1] (-100,300) rectangle (100,0) ;
\end{tikzpicture}
(without $\star$) when $\varepsilon_j = 1$, and to 
\begin{tikzpicture}[x=0.5pt,y=0.5pt,yscale=0.1,xscale=0.1, baseline=-6pt] 
 \draw [line width =1pt, fill = white, fill opacity =1] (-100,300) rectangle (100,-300) ;
 \draw [line width =1pt, fill = white, fill opacity =1]  (-400,0) circle (220 and 140);
\end{tikzpicture}
when $\varepsilon_j = -1$.
\end{notation}

\begin{notation}
We define cross-change moves of ribbon presentations as in Figure \ref{crosschangemove}. That is, we make a copy disk of a node by $S1$ move and intersect this disk with some band.   Note that we do not put the label $\star$ to the new crossing. In other words, we do not perform perturbation to this crossing. Note that this move is meaningless when $n-j = 3$. 
\begin{figure}[htpb]
   \centering
    \includegraphics [width =8cm] {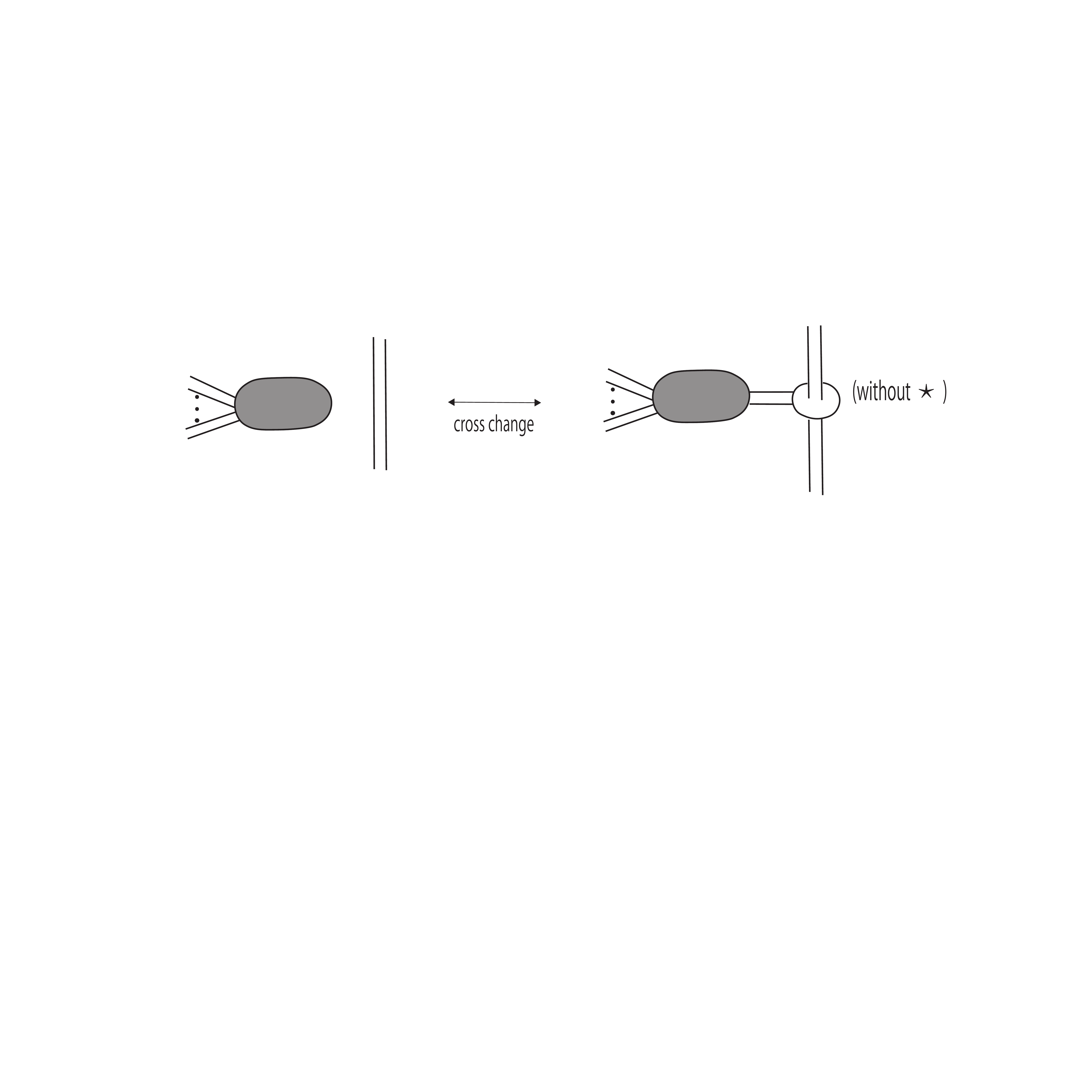}
    \caption{Cross-change move}
    \label{crosschangemove}
\end{figure}
\end{notation}

\begin{prop}
\label{propcrosschangemove}
After several cross-change moves are performed to $P(\Theta(p,q,r))$, the presentation $P(\Theta(p,q,r))(1,1, \dots,1)$ is equivalent to the trivial presentation.
\end{prop}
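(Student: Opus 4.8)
The plan is to reduce $P(\Theta(p,q,r))(1,1,\dots,1)$ to the trivial presentation by a controlled sequence of moves, exploiting the fact that setting all $\varepsilon_i = 1$ deletes every starred crossing, so that after a preliminary cross-change step the node disk carries only unstarred crossings that are themselves inessential. Concretely, I would first apply cross-change moves (Figure \ref{crosschangemove}) at the node: for each band $B_m$ that originally passes through the node, use an $S1$ move to create a parallel copy of the node disk and slide the relevant crossing onto this copy, so that afterwards the (unique) node of the ribbon presentation is incident to exactly the two bands $B_i, B_j$ coming from the pair of leaves that define the ``$\Theta$'' part, and every remaining crossing on a genuine disk is starred. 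Passing to $P(\Theta(p,q,r))(1,\dots,1)$ then removes all of these starred crossings at once, by definition of the notation.

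Next I would observe that once the starred crossings are gone, the bands attached to the leaves no longer pass through any disk interior except possibly the node, so the ribbon presentation decomposes as a disjoint union of ``elementary'' pieces, each of which is a based disk $D_0$ with some bands running to leaves, plus the node with its two incident bands. Each elementary piece with only unstarred crossings can be trivialized: a band running straight from a leaf to $D_0$ with no crossing is removed by the inverse of an $S2$ move (band slide/cancellation), and the $S7$ move lets me reorder the feet of the bands on $D_0$ freely so that cancelling pairs can be brought together. The node with two bands $B_i,B_j$ to leaves, after the cross-change step left it carrying no starred crossing, is likewise a standard ``Hopf-like'' local picture that is unknotted once the crossings that would have linked it are deleted; an $S4$ move (together with $S1$) collapses the node-copy back and removes the extra bands.

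The final step is bookkeeping: verify that performing the cross-change moves and then setting $\varepsilon = (1,\dots,1)$ really does leave the node incident only to $B_i, B_j$ and leaves every other crossing starred — this is where the precise combinatorics of $D(\Theta(p,q,r))$ (which vertices have outgoing vs.\ ingoing open chords, and which crossings received the label $\star$ in the construction of $P(\Theta(p,q,r))$) has to be checked edge-by-edge along the three edges of $\Theta$ carrying $p$, $q$, $r$ hairs. I expect this to be the main obstacle: not any single move, but confirming that the cross-change moves can be arranged so that \emph{every} essential crossing is either starred (hence deleted when $\varepsilon_j=1$) or pushed onto a disposable copy disk, uniformly in $p,q,r$. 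Once that is established, the chain of $S1,S2,S4,S7$ moves above reduces the whole presentation to a based disk with no bands, i.e.\ the trivial presentation, completing the proof.
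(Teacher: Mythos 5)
Your proposal is built on an inverted reading of the $\varepsilon$-convention, and this breaks the whole argument. In the Notation immediately preceding Proposition \ref{propcrosschangemove}, setting $\varepsilon_j=1$ \emph{keeps} the $j$th crossing (only the bookkeeping label $\star$ is dropped), while $\varepsilon_j=-1$ is what resolves it, pushing the band off the disk. Hence $P(\Theta(p,q,r))(1,1,\dots,1)$ is just $P(\Theta(p,q,r))$ with all $k$ ribbon crossings still present, and the proposition is a genuinely nontrivial unknotting statement about that fully-crossed presentation. Your plan — ``setting all $\varepsilon_i=1$ deletes every starred crossing, then trivialize the crossing-free remainder by $S1$, $S2$, $S7$'' — addresses the opposite situation; note also that if $\varepsilon_i=1$ really deleted the crossings, the statement would be immediate and no cross-change moves would be needed at all, which should have been a warning sign. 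In addition, your description of the cross-change step misstates what the move does: by Figure \ref{crosschangemove} it only creates a copy of the node by $S1$ and introduces one new \emph{unstarred} crossing of that copy with some band; it does not remove existing crossings, nor does it change which bands are incident to the node, so the claim that afterwards the node ``carries only unstarred crossings'' and is ``incident to exactly $B_i,B_j$'' cannot be arranged this way.

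What actually has to be shown, and what the paper does, is this: one adds two specific new branches from the node (the red and blue bands of Figure \ref{crosschangedpresentation}), i.e.\ performs cross-change moves whose new crossings persist for every choice of $\varepsilon$, chosen so that the two white leaf-disks of the node part $Q$ can be resolved by the $S4$ move (Figure \ref{crosschangedpresentation2}); once those two disks are freed, the crossings along the three chains of $P(\Theta(p,q,r))$ unravel successively, exactly as in the cascade argument used for Proposition \ref{presentation spherical cycles}, and the presentation collapses to the trivial one. So the missing content in your proposal is precisely the geometric point of the proposition: exhibiting cross-change crossings that make the presentation \emph{with all starred crossings intact} isotopically trivial. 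As written, your argument proves nothing about this statement and would need to be redone from the correct convention.
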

\begin{proof}
We perform cross-change moves as in Figure \ref{crosschangedpresentation}. Namely, we add the new red and blue branches from the node. They play a role in resolving the two white disks  of $Q = $
\begin{tikzpicture}[x=0.75pt,y=0.75pt,yscale=0.1,xscale=0.1, baseline=-6pt] 

%band
\draw  [line width =1pt] (45,120)--(200,20);
\draw  [line width =1pt] (40,60)--(190,-40);

\draw  [line width =1pt] (45,-200)--(200,-100);
\draw  [line width =1pt] (40,-140)--(190,-40);

\draw  [line width =1pt] (335,0)--(475,0);
\draw  [line width =1pt] (335,-80)--(475,-80);

%disk
\draw  [line width =1pt]  (-30,-180) circle (80);
\draw  [line width =1pt]  (-30,100) circle (80);
\draw  [line width =1pt, fill = gray]  (265,-40) circle (80);

\end{tikzpicture}
by $S4$ move. See Figure \ref{crosschangedpresentation2}. Then, the resulting presentation becomes trivial,

\begin{figure}[htpb]
\centering
     \includegraphics [width =8cm] {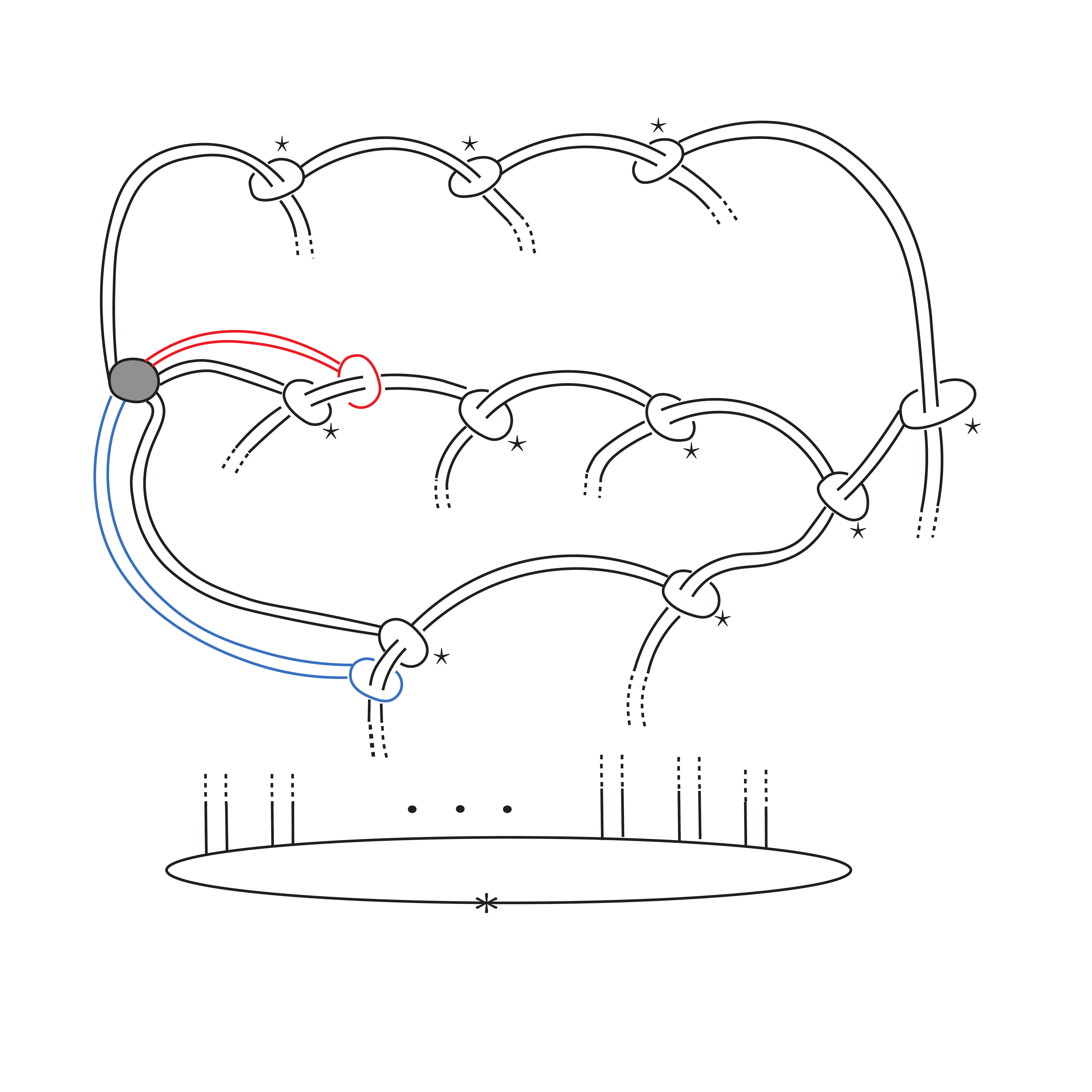}
     \caption{The ribbon presentation  $P^{\prime}(\Theta(p,q,r))$ after cross-change moves}
     \label{crosschangedpresentation}
\end{figure}

\begin{figure}[htpb]
\centering
     \includegraphics [width =13cm] {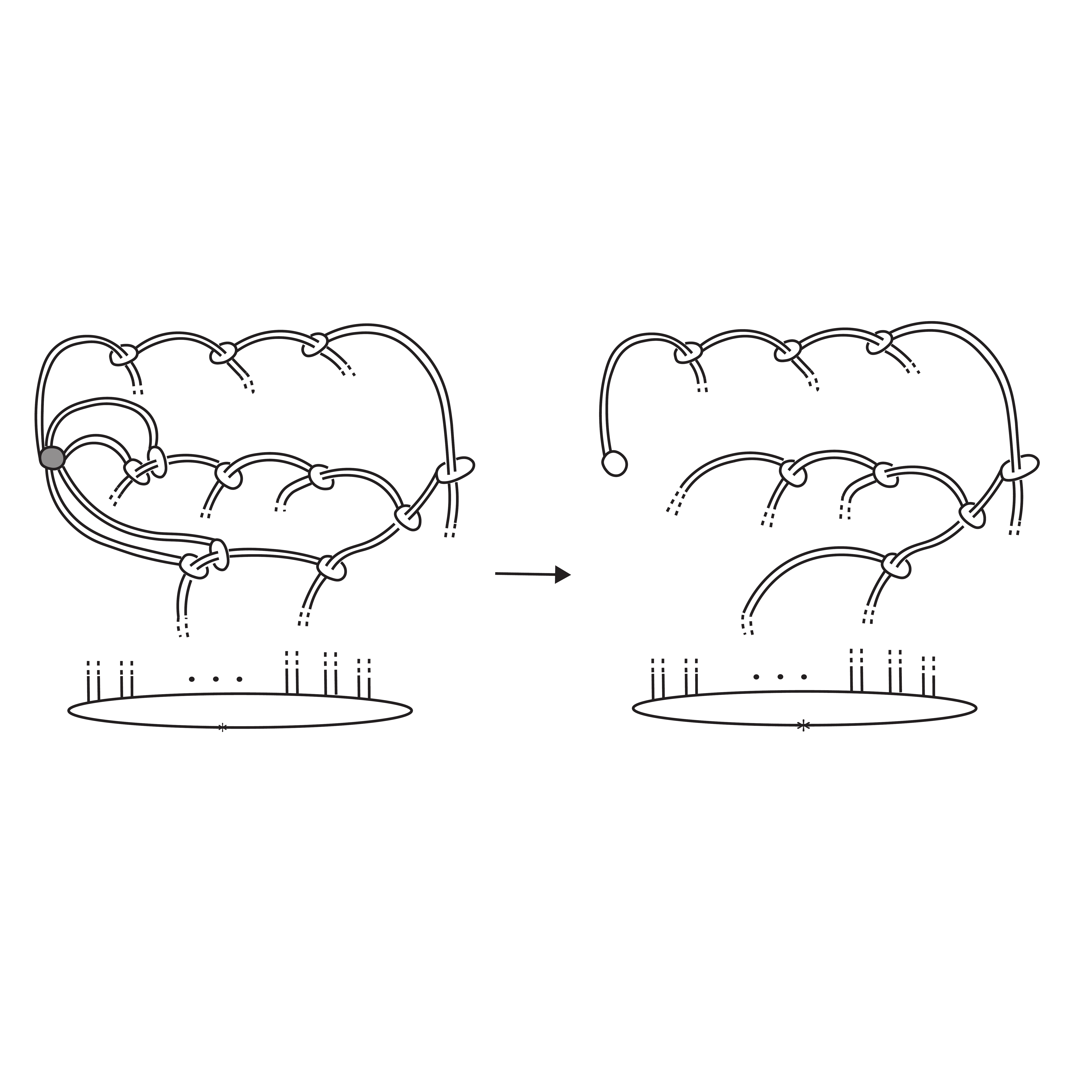}
     \caption{The ribbon presentation before and after $S4$ move}
     \label{crosschangedpresentation2}
\end{figure}

\end{proof}

We write $P^{\prime}(\Theta(p,q,r))$ for the ribbon presentation after the cross-change moves.
Note that the cross-change move might change the homology classes of the cycles. However, the move does not affect the pairing argument we later discuss. %Vassiliev invariants of order $\leq k$ vanish for singular knots with $(k+1)$ singular points.

%\begin{figure}[htpb]
%\centering
%    \includegraphics [width =8cm] {presentationbefores4.png}
 %    \label{presentationbefores4}
%\end{figure}

\begin{prop}
\label{presentation spherical cycles}
If at least one of $\varepsilon_i$ is $-1$, $P(\Theta(p,q,r)) (\varepsilon_1, \varepsilon_2, \dots,\varepsilon_k)$ and $P^{\prime}(\Theta(p,q,r)) (\varepsilon_1, \varepsilon_2, \dots,\varepsilon_k)$ are equivalent to a presentation such that at least one of the two white disks of  $Q = $
\begin{tikzpicture}[x=0.75pt,y=0.75pt,yscale=0.1,xscale=0.1, baseline=-6pt] 

%band
\draw  [line width =1pt] (45,120)--(200,20);
\draw  [line width =1pt] (40,60)--(190,-40);

\draw  [line width =1pt] (45,-200)--(200,-100);
\draw  [line width =1pt] (40,-140)--(190,-40);

\draw  [line width =1pt] (335,0)--(475,0);
\draw  [line width =1pt] (335,-80)--(475,-80);

%disk
\draw  [line width =1pt]  (-30,-180) circle (80);
\draw  [line width =1pt]  (-30,100) circle (80);
\draw  [line width =1pt, fill = gray]  (265,-40) circle (80);

\end{tikzpicture}
is a node. 
\end{prop}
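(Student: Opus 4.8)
The plan is to track the two white disks of $Q$, writing $W_1$ for the one labelled $(1)$ and $W_2$ for the one labelled $(2)$. The first observation is that each of $W_1,W_2$ is the disk of \emph{exactly one} crossing of $P(\Theta(p,q,r))$: in the recipe of Definition \ref{ribbon presentation from diagram} every non-based, non-node disk comes from a vertex of $D(\Theta(p,q,r))$ carrying exactly one outgoing open chord, and it is met by precisely one band, namely the one attached to the target of that chord. Write $c_a$ and $c_b$ for the crossings having $W_1$ and $W_2$ as their disks; thus $a,b\in\{1,\dots,k\}$ are the labels of the two chords issuing from the vertices of the special oriented line (the one carrying two outgoing open chords) that the construction replaces by $Q$.

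The easy case is $\varepsilon_a=-1$ or $\varepsilon_b=-1$, say $\varepsilon_a=-1$: then $c_a$ has been replaced by the disjoint disk-and-band configuration, the unique band meeting $W_1$ no longer meets it, and $W_1$ is a node. For $P'(\Theta(p,q,r))(\varepsilon)$ the same conclusion holds, because the cross-change moves producing $P'$ from $P$ create no new crossing on $W_1$ or on $W_2$ — they intersect the copied node-disks with neighbouring bands, not with $W_1$ or $W_2$ — so deleting $c_a$ still leaves $W_1$ crossing-free.

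The main case is $\varepsilon_a=\varepsilon_b=+1$ with $\varepsilon_c=-1$ for some $c\notin\{a,b\}$. Deleting $c_c$ turns the disk $D_c$ into a node, and I would propagate this freed node along the chain structure of $D(\Theta(p,q,r))$ toward $Q$ and use it to cancel $c_a$ or $c_b$, using only the moves $S1,S2,S4,S7$ of Figure \ref{exofmoves}: $S2$ slides a node along an incident band, $S7$ reorders the bands at the based disk, and an $S1$/$S4$ pair is the copy-disk-and-resolve step already used in the proof of Proposition \ref{propcrosschangemove}. Concretely I would run a case analysis on the position of the chord $c$ in $D(\Theta(p,q,r))$ — on one of the $p$-, $q$-, or $r$-chains of hairs, or among the chords adjacent to a trivalent vertex of $\Theta$ — or, what amounts to the same thing, an induction that absorbs the hairs of $\Theta(p,q,r)$ one at a time and reduces to the base case $\Theta(1,0,1)$ of \cite{Yos 1}. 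In each subcase a run of consecutive $S2$-moves carries the freed node into the $Q$-part, where the $S1$/$S4$ pair frees $W_1$ or $W_2$; and since the cross-change moves of Proposition \ref{propcrosschangemove} are supported away from a deleted crossing $c_c$, they commute with the deletion up to isotopy, so the same sequence of moves treats $P'(\Theta(p,q,r))(\varepsilon)$ as well.

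The step I expect to be the real obstacle is this last one: verifying that, whichever single crossing is deleted, the cascade of $S2$-moves actually reaches one of $W_1,W_2$ and does not stall at the based disk or at the second node, uniformly in $p,q,r$. I would control this by assigning to each chord the unique path in $D(\Theta(p,q,r))$ from its target back toward the vertex of $\Theta$ absorbed by $Q$, and showing that deleting any one crossing shortens one such path all the way to a white disk of $Q$.
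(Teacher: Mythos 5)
Your easy case (one of the two chords issuing from $Q$ is the resolved one) and your observation that the cross-change moves creating $P^{\prime}$ put their new crossings only on the copied node-disks are fine, and your overall strategy --- propagate the effect of the single resolved crossing along the chains of hairs until it reaches $Q$ --- is the same as the paper's. But the proposal stops exactly where the proof has to be carried out: you yourself flag the verification that the cascade reaches $W_1$ or $W_2$ as ``the real obstacle'' and offer only an unproved heuristic (paths that get ``shortened''). That verification is the entire content of the paper's proof, and it is elementary once phrased correctly: in $P(\Theta(p,q,r))$ every leaf is pierced by exactly one band, so resolving the $i$-th crossing turns the leaf at the initial point of the $i$-th chord into a crossing-free disk; a branch whose terminal disk is crossing-free can be pulled back out of the unique leaf its band pierces (an isotopy of the presentation --- no node-copying is involved), which makes that next leaf crossing-free; iterating, along the middle or lower edge the cascade terminates by pulling the relevant band out of $W_1$ or $W_2$, while along the upper edge it first frees the rightmost leaf of the lower edge and then reduces to the previous case (Figure \ref{equivalenttotrivial}). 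The cascade follows the chords, not the bands' attachments to $D_0$, so it cannot stall at the based disk or at the gray node; this is the precise statement your sketch leaves unestablished.

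Separately, the mechanism you invoke for the final cancellation at $Q$ is misattributed and, taken literally, would break the conclusion. The ``copy-disk-and-resolve'' step in the proof of Proposition \ref{propcrosschangemove} is not an $S1$/$S4$ pair of equivalences: the copy disk is made to intersect a band by a cross-change move, which is expressly not an equivalence of ribbon presentations (the paper notes it may change the homology class of the associated cycle). If freeing $W_1$ or $W_2$ required such a move, you would not obtain the equivalence asserted in Proposition \ref{presentation spherical cycles}. In the correct argument the crossing on $W_1$ (or $W_2$) disappears only because the band piercing it has, through the cascade, acquired a crossing-free terminal disk; once the cascade statement above is proved, no $S1$, $S7$, or copy disks are needed at all.
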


\begin{proof}
Let $\varepsilon_i = -1$ for some $i \in \{1, \dots, k\}$. Let $B_i$ be the branch (band) of $P(\Theta(p,q,r))$ or $P^{\prime}(\Theta(p,q,r))$ which is attached to a disk corresponding to the $i$-th crossing. 

First, if $B_i$ is the upper left or lower left branch of the part $Q =$
\begin{tikzpicture}[x=0.75pt,y=0.75pt,yscale=0.1,xscale=0.1, baseline=-6pt] 

%band
\draw  [line width =1pt] (45,120)--(200,20);
\draw  [line width =1pt] (40,60)--(190,-40);

\draw  [line width =1pt] (45,-200)--(200,-100);
\draw  [line width =1pt] (40,-140)--(190,-40);

\draw  [line width =1pt] (335,0)--(475,0);
\draw  [line width =1pt] (335,-80)--(475,-80);

%disk
\draw  [line width =1pt]  (-30,-180) circle (80);
\draw  [line width =1pt]  (-30,100) circle (80);
\draw  [line width =1pt, fill = gray]  (265,-40) circle (80);

\end{tikzpicture}, there is nothing to show. 
Second, assume that the branch $B_i$ belongs to the middle (resp. lower) edge of $\Theta(p,q,r)$. The branch has a crossing with the disk attached to the next (left) branch. However, the crossing is resolved since the disk attached to $B_i$ has no crossing. We repeat this process. Then, all the crossings of the branches after $B_i$ along the middle (resp. lower) edge are resolved. Consequently, either the upper left disk or the lower left disk of $Q$ is resolved. Then,  the argument is reduced to the first case.  See Figure \ref{equivalenttotrivial}. Finally, assume $B_i$ belongs to the upper edge of $\Theta(p,q,r)$.  Then, similarly to the second case, the disk attached to the rightmost branch of the lower edge of $\Theta$ is resolved. Hence, the argument is reduced to the second case. 
 \end{proof}
 
 \begin{figure}[htpb]
\centering
    \includegraphics [width =12cm] {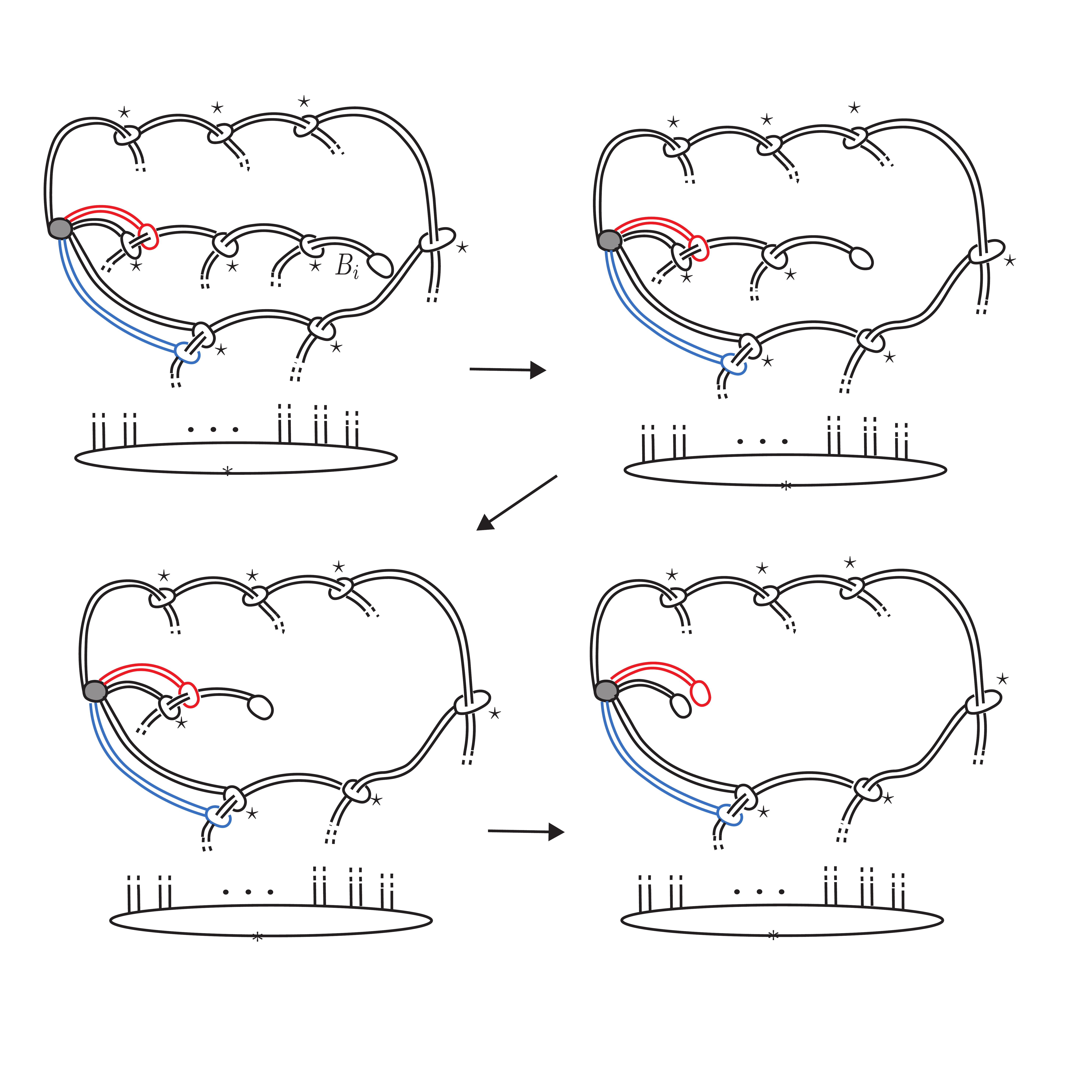}
      \caption{$P^{\prime}(\Theta(p,q,r)) (\varepsilon_1, \varepsilon_2, \dots,\varepsilon_k)$ is trivial if some $\varepsilon_i = -1$}
       \label{equivalenttotrivial}
\end{figure}

\subsection{The class $\mathcal{H}_{n,j}(g=2)$ of $2$-loop cycles}
\label{the class of 2loop cycles}
Let $d^{\prime}(\Theta(p,q,r))$ be the cycle obtained from the ribbon presentation $P^{\prime}(\Theta(p,q,r))$ (when $n-j-2$). 

\begin{definition}
We write $\mathcal{H}_{n,j}(k,g=2)$ for the subspace of $H_{k(n-j-2)+(j-1)}(\overline{\mathcal{K}}_{n,j}, \mathbb{R})$ generated by the cycles $d^{\prime}(\Theta(p,q,r))$ which satisfy $k = p+q+r+1, p, r \geq 1, q\geq 0$. 
We write $\mathcal{H}_{n,j}(g=2)$ for the subspace of $H_{\ast}(\overline{\mathcal{K}}_{n,j}, \mathbb{R})$  generated by the cycles $d^{\prime}(\Theta(p,q,r))$ which satisfy $p, r \geq 1, q\geq 0$. 
\end{definition}

Note that when $n-j = 2$, all the cycles $d^{\prime}(\Theta(p,q,r))$ lie in degree $j-1$. 
Below, we show that $\mathcal{H}_{n,j}(g=2)$ lies in the homotopy grpup when $n-j=2$. 

\begin{prop}
Let $n-j = 2$. 
Let 
\[
d(\Theta(p,q,r))(\varepsilon_1, \varepsilon_2, \dots, \varepsilon_k) : S^{j-1} \rightarrow \overline{\mathcal{K}}_{n,j}
\] 
be the cycle corresponding to the ribbon presentation $P(\Theta(p,q,r))(\varepsilon_1, \varepsilon_2, \dots, \varepsilon_k)$. Note that 
\[
d(\Theta(p,q,r)) = \sum_{(\varepsilon_1, \varepsilon_2, \dots, \varepsilon_k) \in \{+1, -1\}^{\times k}}  d(\Theta(p,q,r))(\varepsilon_1, \varepsilon_2, \dots, \varepsilon_k).
\]
Then, if at least one of $\varepsilon_i$ is $-1$, the cycle $d(\Theta(p,q,r))(\varepsilon_1, \varepsilon_2, \dots, \varepsilon_k)$ is a degenerate cycle. More precisely, the cycle is homotopic to a constant function. A similar result also holds for $d^{\prime}(\Theta(p,q,r))$.
\end{prop}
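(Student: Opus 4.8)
The plan is to reduce the statement to Proposition~\ref{presentation spherical cycles}, which already tells us that whenever some $\varepsilon_i = -1$, the ribbon presentation $P(\Theta(p,q,r))(\varepsilon_1,\dots,\varepsilon_k)$ (and likewise $P^{\prime}(\Theta(p,q,r))(\varepsilon_1,\dots,\varepsilon_k)$) is equivalent, after the allowed moves, to a presentation in which at least one of the two white disks of the block $Q$ is a node --- i.e.\ carries no crossing band. The first step is to examine what happens to the additional $S^{j-1}$-family in this situation. Recall that the $S^{j-1}$-parameter is produced by revolving the tube $\hat{B_j}$ (attached to the second white disk of $Q$) around the tube $\hat{B_i}$ (attached to the first white disk), inside the punctured sphere coming from the node. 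If one of these two disks is a node, then the corresponding tube is unknotted and unlinked from everything else in the presentation, so the revolution of $\hat{B_j}$ around $\hat{B_i}$ sweeps out a family of embeddings that is isotopic, uniformly in the $S^{j-1}$-parameter, to a \emph{constant} family: there is no obstruction to contracting the orbit because the ambient local picture is that of a trivial tube revolving around a trivial tube, which is an isotopically trivial loop of embeddings.

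Concretely, I would argue as follows. By Proposition~\ref{presentation spherical cycles} fix a sequence of moves (the $S$-moves of Figure~\ref{exofmoves} together, where needed, with cross-change moves for $P^{\prime}$) carrying the presentation to one in which, say, the first white disk of $Q$ is a node. These moves are realized by isotopies of the corresponding long embeddings that can be taken to be independent of the $S^{j-1}$-parameter (the revolution is supported in a small ball disjoint from the bands being manipulated, exactly as the perturbation directions $v_i$ were arranged to be disjoint from the node region). Hence $d(\Theta(p,q,r))(\varepsilon_1,\dots,\varepsilon_k)$ is homotopic, as a map $S^{j-1}\to\overline{\mathcal K}_{n,j}$, to the family built from this simplified presentation. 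In that simplified presentation the tube $\hat{B_i}$ bounds an embedded disk disjoint from the rest of the configuration, so revolving $\hat{B_j}$ around it is a nullhomotopic loop in the space of embeddings; threading this nullhomotopy through the canonical path to the standard immersion (Definition~\ref{path to standard immersion}) gives a homotopy in $\overline{\mathcal K}_{n,j}$ from $d(\Theta(p,q,r))(\varepsilon_1,\dots,\varepsilon_k)$ to a constant map. The same argument applies verbatim to $d^{\prime}(\Theta(p,q,r))(\varepsilon_1,\dots,\varepsilon_k)$, using the $P^{\prime}$-version of Proposition~\ref{presentation spherical cycles}.

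The main obstacle I anticipate is the parameter-independence of the simplifying isotopy: one must check that the moves produced by Proposition~\ref{presentation spherical cycles} can be performed without ever interfering with the small ball in which the $S^{j-1}$-revolution takes place, and that the resulting nullhomotopy of the revolution loop lands in $\overline{\mathcal K}_{n,j}$ rather than merely in $\mathcal K_{n,j}$ --- i.e.\ that it is compatible with the chosen path of immersions. This is where the codimension-two hypothesis $n-j=2$ is used (for $n-j\geq 3$ the cycles already lie in the unknot component and the cross-change moves are vacuous), and it is essentially the same bookkeeping that underlies Propositions~\ref{propcrosschangemove} and~\ref{presentation spherical cycles}; once those locality statements are granted, the degeneracy of the $\varepsilon$-component with a $-1$ is a formal consequence.
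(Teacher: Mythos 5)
Your proposal is correct and follows essentially the same route as the paper: invoke Proposition \ref{presentation spherical cycles} to make one of the two white disks of $Q$ crossing-free, and then observe that the corresponding tube of Figure \ref{additionalfamily} becomes trivial, so the $S^{j-1}$-family collapses to a constant map in $\overline{\mathcal{K}}_{n,j}$. Your extra bookkeeping about performing the simplifying moves uniformly in the $S^{j-1}$-parameter and compatibly with the path to the standard immersion is exactly the implicit content of the paper's (much terser) argument.
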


\begin{proof}
By Proposition \ref{presentation spherical cycles}, either the upper left disk or the lower left disk of 
\begin{tikzpicture}[x=0.75pt,y=0.75pt,yscale=0.1,xscale=0.1, baseline=-6pt] 

%band
\draw  [line width =1pt] (45,120)--(200,20);
\draw  [line width =1pt] (40,60)--(190,-40);

\draw  [line width =1pt] (45,-200)--(200,-100);
\draw  [line width =1pt] (40,-140)--(190,-40);

\draw  [line width =1pt] (335,0)--(475,0);
\draw  [line width =1pt] (335,-80)--(475,-80);

%disk
\draw  [line width =1pt]  (-30,-180) circle (80);
\draw  [line width =1pt]  (-30,100) circle (80);
\draw  [line width =1pt, fill = gray]  (265,-40) circle (80);
\end{tikzpicture}
of the presentation $P(\Theta(p,q,r))(\varepsilon_1, \varepsilon_2, \dots, \varepsilon_k)$ does not have a crossing. Then, one of the two tubes of Figure \ref{additionalfamily} which gives the $S^{j-1}$ family vanishes. Hence, the corresponding cycle is homotopic to a constant function.
\end{proof}

\begin{coro}
\label{from homotopy group}
Let $n-j =2$. Then $d(\Theta(p,q,r))$ is homologous to the cycle
\[
d(\Theta(p,q,r))((\varepsilon_i = 1)_i): S^{j-1} \rightarrow \overline{\mathcal{K}}_{n,j}.
\]
A similar result also holds for $d^{\prime}(\Theta(p,q,r))$. Moreover, the cycle $d^{\prime}(\Theta(p,q,r))((\varepsilon_i = 1)_i)$ gives an element of the homotopy group  $\pi_{j-1} (\overline{\mathcal{K}}_{j+2, j})_{\iota}$.
\end{coro}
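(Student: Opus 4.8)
The plan is to read this off from the degeneracy proposition just proved together with Proposition~\ref{propcrosschangemove}, so that no new geometric input is needed.

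For the homological half, I would begin from the decomposition
$d(\Theta(p,q,r)) = \sum_{(\varepsilon_1,\dots,\varepsilon_k)} d(\Theta(p,q,r))(\varepsilon_1,\dots,\varepsilon_k)$
recorded in the preceding proposition; this is legitimate because, for $n-j=2$, the parameter space $(S^{n-j-2})^{k} = (S^{0})^{k}$ is the finite set of $2^{k}$ sign-vectors. By that proposition every summand with at least one $\varepsilon_i = -1$ is homotopic to a constant map $S^{j-1}\to\overline{\mathcal{K}}_{n,j}$; a constant map extends over $D^{j}$, so it sends the fundamental class $[S^{j-1}]$ to a boundary and is null-homologous. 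Hence only the term $d(\Theta(p,q,r))((\varepsilon_i=1)_i)$ survives in $H_{j-1}(\overline{\mathcal{K}}_{n,j})$, which is the asserted homology. The identical argument works for $d^{\prime}(\Theta(p,q,r))$, since Proposition~\ref{presentation spherical cycles}, and therefore the degeneracy proposition, is formulated for $P^{\prime}$ as well.

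For the homotopy-group half I would show that $d^{\prime}(\Theta(p,q,r))((\varepsilon_i=1)_i)\colon S^{j-1}\to\overline{\mathcal{K}}_{j+2,j}$ takes values in the component of $\iota$. Setting $\varepsilon=(1,\dots,1)$ amounts to performing no perturbation, so at the base point of the $S^{j-1}$-family (the ``standard'' position of the moving tube) this map is the canonical lift of the long embedding associated with the ribbon presentation $P^{\prime}(\Theta(p,q,r))$ with its $\star$-crossings kept as genuine crossings. By Proposition~\ref{propcrosschangemove}, $P^{\prime}(\Theta(p,q,r))$ --- which is $P(\Theta(p,q,r))(1,\dots,1)$ after the cross-change moves --- is equivalent to the trivial ribbon presentation, so the underlying embedding is isotopic to $\iota$. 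Tracking the canonical path of immersions of Definition~\ref{path to standard immersion} along the ribbon moves realizing this equivalence then yields a path in $\overline{\mathcal{K}}_{j+2,j}$ from the base-point value of $d^{\prime}(\Theta(p,q,r))((\varepsilon_i=1)_i)$ to $\iota$. Since $S^{j-1}$ is connected, the whole map factors through $(\overline{\mathcal{K}}_{j+2,j})_{\iota}$, and --- composing with this path when a based representative is wanted --- it represents an element of $\pi_{j-1}((\overline{\mathcal{K}}_{j+2,j})_{\iota})$ whose Hurewicz image is the homology class produced in the first half.

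The main obstacle is this last point: promoting the ribbon-presentation equivalence of Proposition~\ref{propcrosschangemove} to a genuine path \emph{inside} $\overline{\mathcal{K}}_{j+2,j}$, i.e.\ checking that every move used there is compatible with the specific choice of path of immersions used to lift embeddings into $\overline{\mathcal{K}}_{n,j}$. For the Habiro--Shima moves this is the same compatibility that makes Definition~\ref{path to standard immersion} and the cycle $\widetilde{c_k}$ well defined, and which was handled in \cite{Yos 1, SW}; what genuinely needs verification here is that the cross-change moves of Figure~\ref{crosschangemove} --- where a copy of a node is created by an $S1$ move and then made to cross a band without any perturbation being performed --- also respect it. Once that is in place, the corollary follows formally.
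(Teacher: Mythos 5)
Your argument is essentially the paper's: the homology statement follows from the preceding degeneracy proposition exactly as you say, and the paper's entire proof of the homotopy-group statement is the one sentence that $d^{\prime}(\Theta(p,q,r))((\varepsilon_i=1)_i)$ lies in the unknot component because $P^{\prime}(\Theta(p,q,r))((\varepsilon_i=1)_i)$ is equivalent to the trivial presentation (Proposition \ref{propcrosschangemove}). The compatibility of the ribbon-presentation moves with the chosen path of immersions, which you flag as the remaining point to verify, is not discussed in the paper's proof either, so your proposal matches (and slightly refines) the intended argument.
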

\begin{proof}
The cycle $d^{\prime}(\Theta(p,q,r))((\varepsilon_i = 1)_i)$ is in the unknot component because $P^{\prime}(\Theta(p,q,r))((\varepsilon_i = 1)_i)$ is equivalent to the trivial presentation. 
\end{proof}

\subsection{The class $\mathcal{H}_{n,j}(g=3)$ of $3$-loop cycles}
\label{3loopcycles}
We write $\ctext{Y}((p_i)_i) = \ctext{Y}(p_1, p_2, p_3, p_4, p_5, p_6)$ for the $3$-loop hairy graph
\[
\begin{tikzpicture}[x=5pt,y=5pt,yscale=0.09,xscale=0.09, baseline=-3pt] 
%ベンツマーク
\draw (0,0) circle (100); 
\draw (0,0) -- (0,100);
\draw (0,0) -- (70,-70);
\draw (0,0) -- (-70, -70);

\draw (-120,0) node {$p_1$};
\draw (20,50) node {$p_6$};
\draw (55,-30) node {$p_4$};
\draw (-55,-30) node {$p_5$};
\draw (0,-120) node {$p_3$};
\draw (120,0) node {$p_2$};

\end{tikzpicture},
\]
whree $p_i$ stands for $p_i$ hairs on the edge. Note that the order of the graph $\ctext{Y}((p_i)_i)$ equals to $k = 2 + \sum_{i=1}^6 p_i$. 

We construct the  $k(n-j-2) + 2(j-1)$-cycles
\[
d(\ctext{Y}((p_i)_i)) : (S^{n-j-2})^k \times (S^{j-1})^2  \rightarrow \overline{\mathcal{K}}_{n,j}, 
\] 
when one of the following conditions is satisfied. 
\begin{itemize}
\item [(1)] $p_1, p_6, p_3, p_4 \geq 1$, 
\item [(2)] $p_1, p_6, p_4 \geq 1$ and $p_2, p_3, p_5 = 0$, 
\item [(3)] $p_1, p_6 \geq 1$ and $p_2, p_3, p_4, p_5 = 0$.
\end{itemize}

First, we give a diagram $D(\ctext{Y}((p_i)_i))$. Suppose condition (1) is satisfied. Orient the $3$-loop graph $\ctext{Y}$ as follows. 
\[
\begin{tikzpicture}[x=5pt,y=5pt,yscale=0.09,xscale=0.09, baseline=-3pt] 

%ベンツマーク
\draw (0,0) circle (100); 

\draw [-Stealth] (-1,-1) -- (0,0); 
\draw [-Stealth] (1,-1) -- (0,0); 
\draw [-Stealth] (-71,-70) -- (-70,-71); 
\draw [-Stealth] (70,-71) -- (71,-70); 
\draw [- Stealth] (0,0) -- (0,100);
\draw [- Stealth] (1,100) -- (0,100);

\draw (0,0) -- (70,-70);
\draw (0,0) -- (-70, -70);

\draw (-120,0) node {$p_1$};
\draw (20,50) node {$p_6$};
\draw (55,-30) node {$p_4$};
\draw (-55,-30) node {$p_5$};
\draw (0,-120) node {$p_3$};
\draw (120,0) node {$p_2$};

\draw (0,130) node {$(I)$};
\draw (0,-40) node {$(I)$};
\draw (-110,-70) node {$(II)$};
\draw (110,-70) node {$(II)$};

\end{tikzpicture}
\]
Note that there are two type-(I) vertices and two type-(II) vertices.
We replace each hair with 
\begin{tikzpicture}[x=1.5pt,y=1.5pt,yscale=0.1,xscale=0.1, baseline=-3pt] 

%solid edges
\draw  [ color = {rgb, 255:red, 0; green, 0; blue, 255 }  ] [-Stealth, line width =1pt] (0,0)--(500,0);

%dashed edges
\draw  [-Stealth, dash pattern = on 2pt off 3 pt, line width =1pt] (100,0)--(100,150);
\draw  [Stealth-, dash pattern = on 2pt off 3 pt, line width =1pt] (300,0)--(300,150);

%black vertices
\draw  [fill={rgb, 255:red, 0; green, 0; blue, 0 }  ,fill opacity=1 ]  (100,0) circle (20);
\draw  [fill={rgb, 255:red, 0; green, 0; blue, 0 }  ,fill opacity=1 ] (300,0) circle (20);
\end{tikzpicture}.
Exceptionally, we change the hairs at the tails of the first and sixth edges to 
\begin{tikzpicture}[x=1.5pt,y=1.5pt,yscale=0.1,xscale=0.1, baseline=-3pt] 

%solid edges
\draw   [color = {rgb, 255:red, 0; green, 0; blue, 255 }]  [-Stealth, line width =1pt] (0,0)--(700,0);

%dashed edges
\draw  [-Stealth, dash pattern = on 2pt off 3 pt, line width =1pt] (100,0)--(100,150);
\draw  [-Stealth, dash pattern = on 2pt off 3 pt, line width =1pt] (300,0)--(300,150);
\draw  [Stealth-, dash pattern = on 2pt off 3 pt, line width =1pt] (500,0)--(500,150);

%black vertices
\draw  [fill={rgb, 255:red, 0; green, 0; blue, 0 }  ,fill opacity=1 ]  (100,0) circle (20);
\draw  [fill={rgb, 255:red, 0; green, 0; blue, 0 }  ,fill opacity=1 ] (300,0) circle (20);
\draw  [fill={rgb, 255:red, 0; green, 0; blue, 0 }  ,fill opacity=1 ] (500,0) circle (20);
\end{tikzpicture}.
We change the hairs at the tails of the fourth and fifth edges to 
\begin{tikzpicture}[x=1.5pt,y=1.5pt,yscale=0.1,xscale=0.1, baseline=-3pt] 

%solid edges
\draw   [color = {rgb, 255:red, 0; green, 0; blue, 255 }]  [-Stealth, line width =1pt] (0,0)--(700,0);

%dashed edges
\draw  [-Stealth, dash pattern = on 2pt off 3 pt, line width =1pt] (100,0)--(100,150);
\draw  [Stealth-, dash pattern = on 2pt off 3 pt, line width =1pt] (300,0)--(300,150);
\draw  [Stealth-, dash pattern = on 2pt off 3 pt, line width =1pt] (500,0)--(500,150);

%black vertices
\draw  [fill={rgb, 255:red, 0; green, 0; blue, 0 }  ,fill opacity=1 ]  (100,0) circle (20);
\draw  [fill={rgb, 255:red, 0; green, 0; blue, 0 }  ,fill opacity=1 ] (300,0) circle (20);
\draw  [fill={rgb, 255:red, 0; green, 0; blue, 0 }  ,fill opacity=1 ] (500,0) circle (20);
\end{tikzpicture}.
Finally, connect ends of chords as expected from the graph $\ctext{Y}((p_i)_i)$. See Figure \ref{diagram2} for an example. 

 \begin{figure}[htpb]
\centering
    \includegraphics [width =10cm] {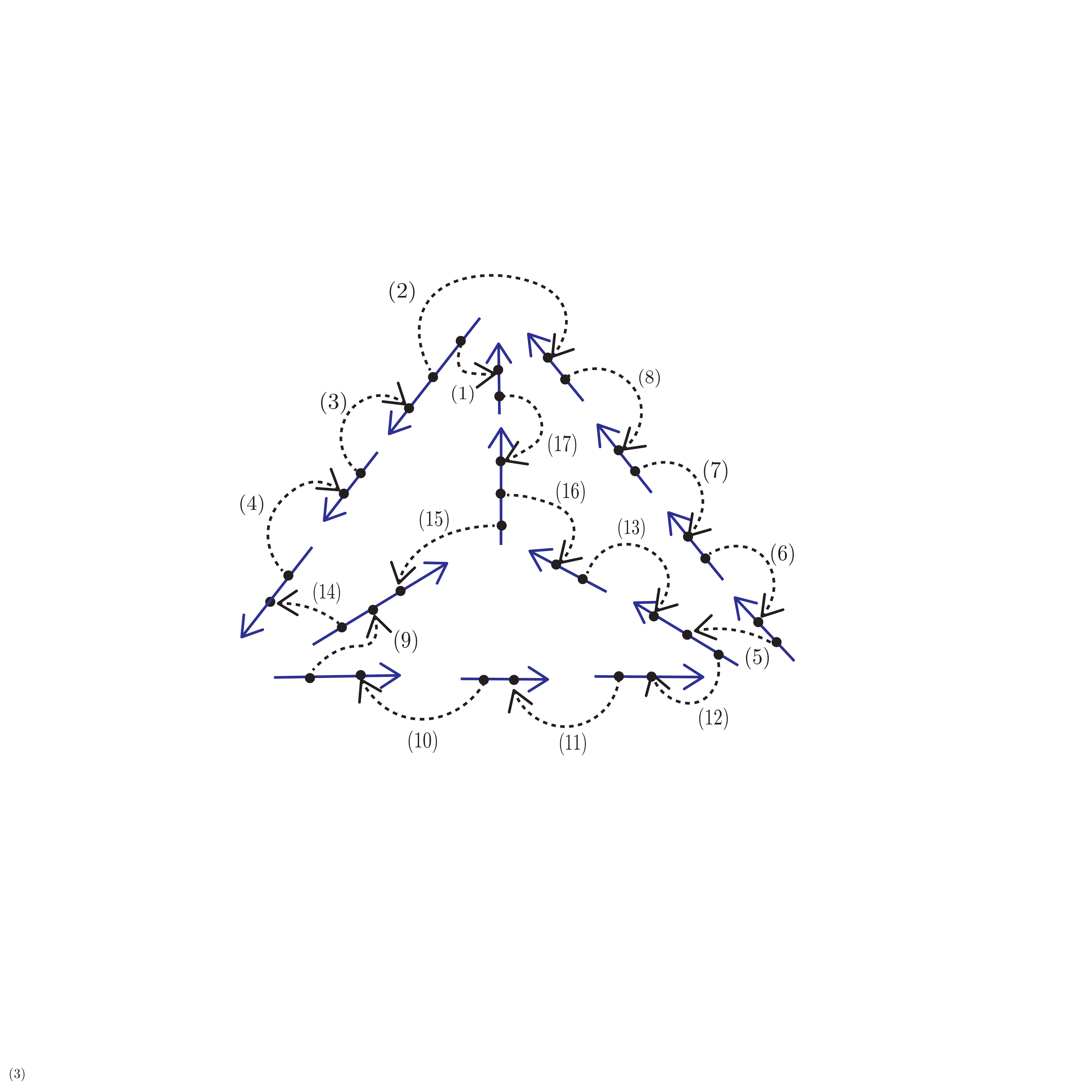}
      \caption{The diagram $D(\ctext{Y}(3, 4, 3, 2, 1, 2))$}
       \label{diagram2}
\end{figure}

 \begin{figure}[htpb]
\centering
    \includegraphics [width =12cm] {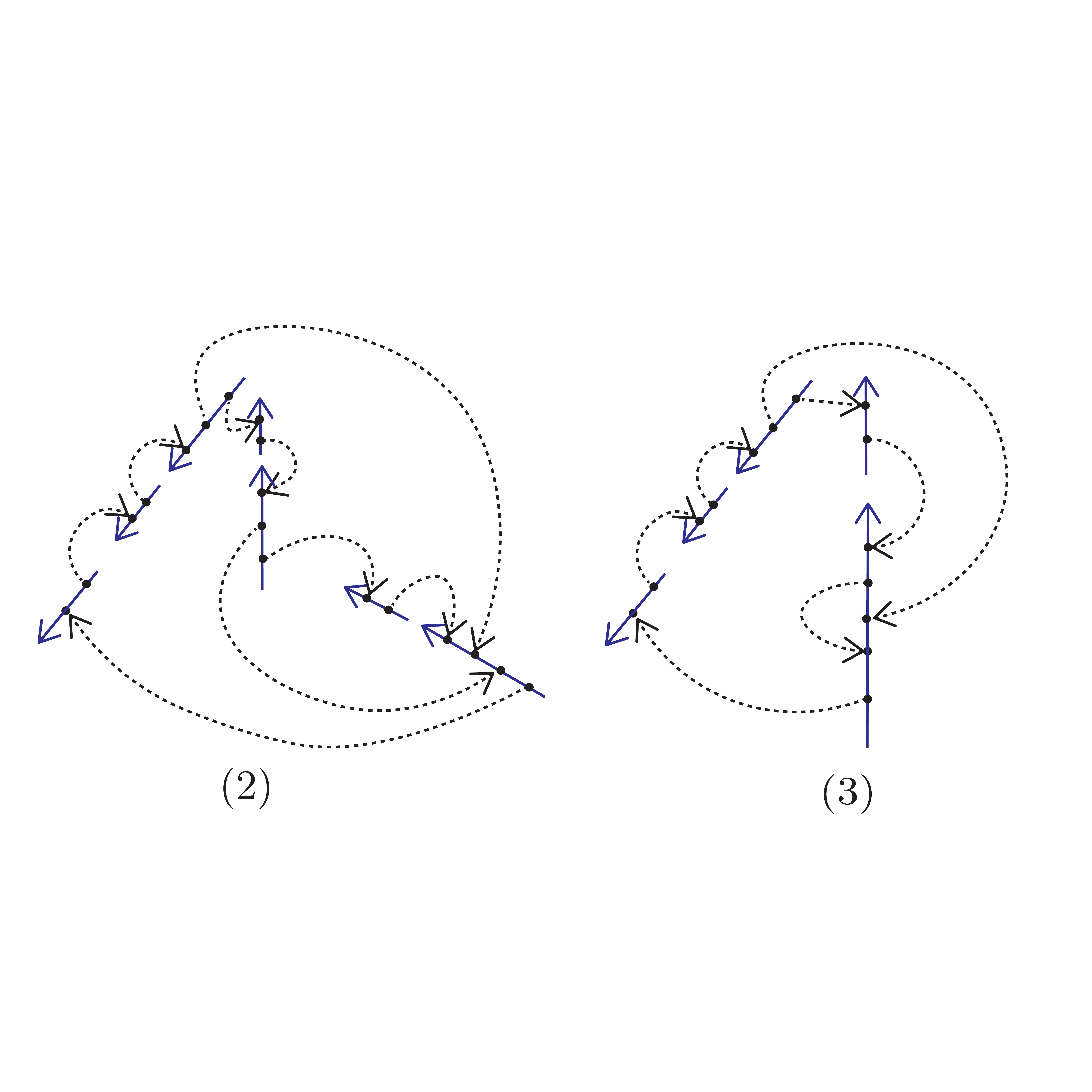}
     \caption{The diagrams $D(\ctext{Y}(3, 0, 0, 2, 0, 2))$ and $D(\ctext{Y}(3, 0, 0, 0, 0, 2))$}
       \label{diagram3}
  \end{figure}

Cases (2) and (3) are exceptional. We give chord diagrams whose oriented lines are along three and two edges of \ctext{Y}, respectively. See Figure \ref{diagram3} .

From the diagram $D(\ctext{Y}((p_i)_i))$, we can construct a ribbon presentation $P(\ctext{Y}((p_i)_i))$. 
This presentation has two nodes and $k$ crossings of bands and disks. See Figure \ref{3loop-presentation}. Hence the corresponding cycle $d(\ctext{Y}((p_i)_i))$ has the parameter space
\[
(S^{n-j-2})^k \times (S^{j-1})^2.
\]

\begin{figure}[htpb]
\centering
    \includegraphics [width =9cm] {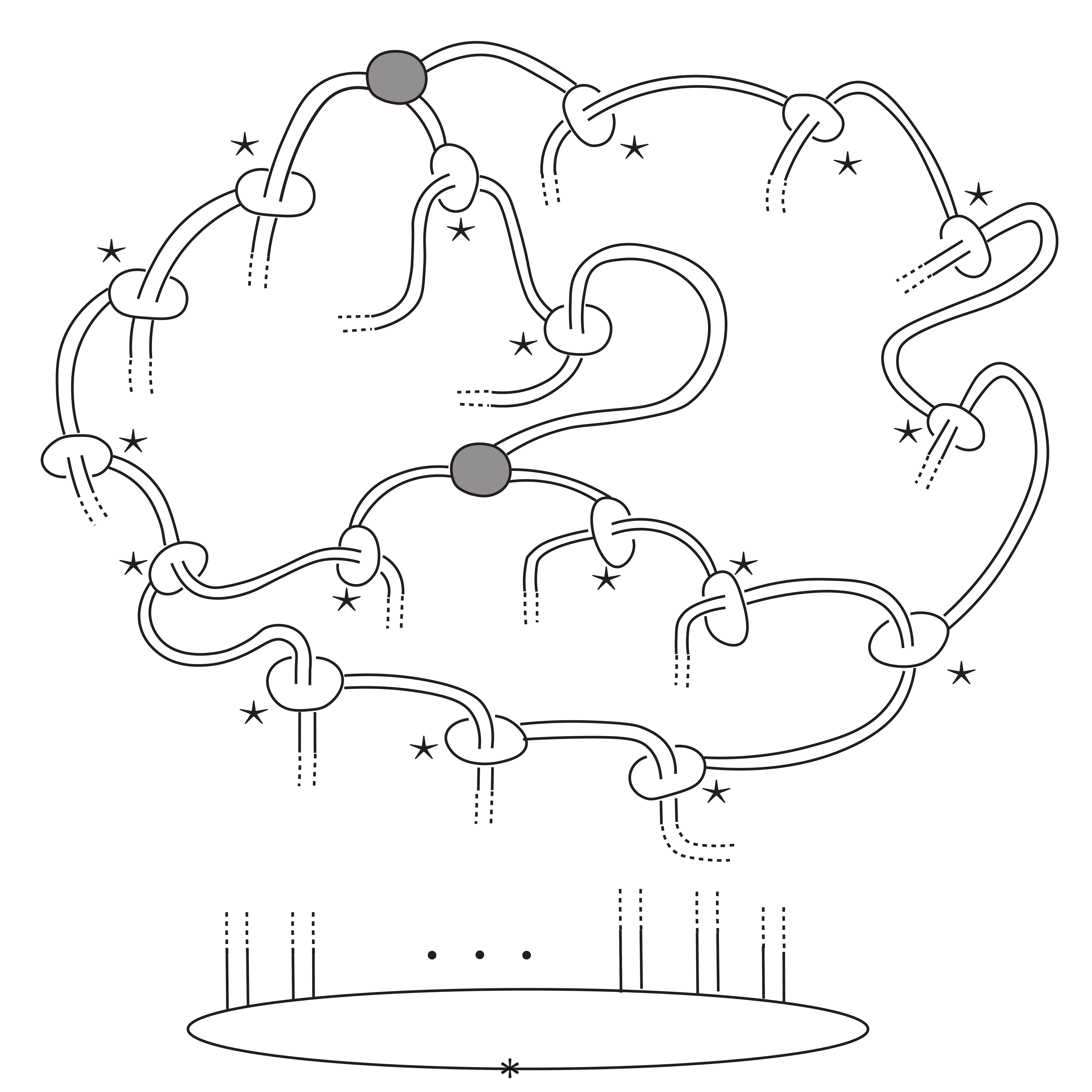}
   \caption{The presentation $P(\ctext{Y}(3, 4, 3, 2, 1, 2))$}
     \label{3loop-presentation}
\end{figure}

By cross-change moves, we can replace $P(\ctext{Y}((p_i)_i))$ with $P^{\prime}(\ctext{Y}((p_i)_i))$ so that $P^{\prime}(\ctext{Y}((p_i)_i)) ((\varepsilon_i =1)_i)$ is equivalent to the trivial presentation. See Figure \ref{3loop-presentation2}.
This ribbon presentation $P^{\prime}(\ctext{Y}((p_i)_i)) ((\varepsilon_i =1)_i)$ gives the cycle $d^{\prime}(\ctext{Y}((p_i)_i))$. 

\begin{figure}[htpb]
\centering
    \includegraphics [width =9cm] {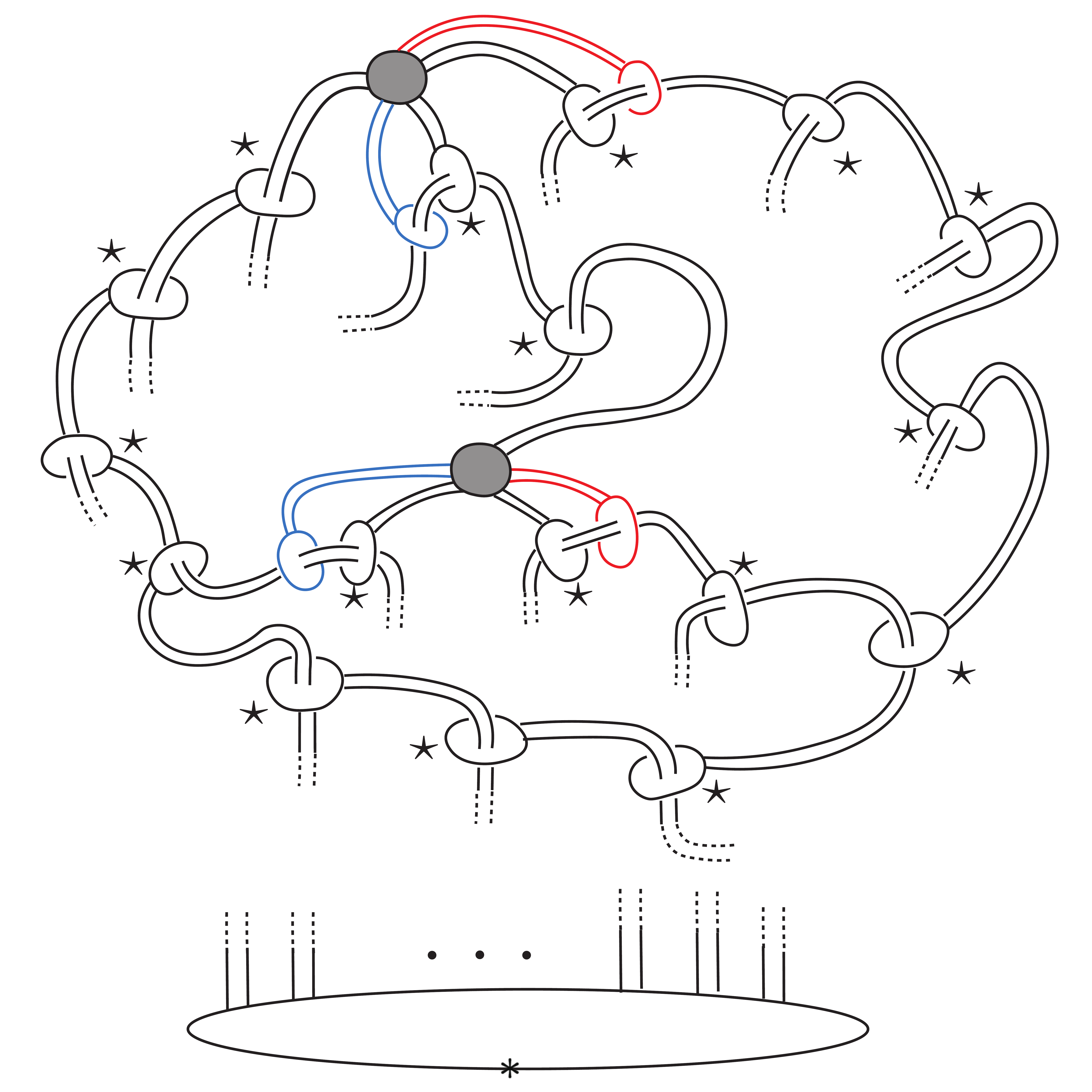}
   \caption{The presentation $P^{\prime}(\ctext{Y}(3, 4, 3, 2, 1, 2))$}
     \label{3loop-presentation2}
\end{figure}

\begin{notation}
We write $\mathcal{H}_{n,j}(k,g=3)$ for the subspace of $H_{k(n-j-2)+2(j-1)}(\overline{\mathcal{K}}_{n,j}, \mathbb{R})$ generated by the cycles $d^{\prime}(\ctext{Y}((p_i)_i)) $ which satisfy 
$k = 2 + \sum_{i=1}^6 p_i$ and one of the conditions (1)(2)(3) on $(p_i)_i$. 
We write $\mathcal{H}_{n,j}(g=3)$ for the subspace of $H_{\ast}(\overline{\mathcal{K}}_{n,j}, \mathbb{R})$  generated by the cycles $d^{\prime}(\ctext{Y}((p_i)_i))$ which satisfy one of the conditions (1)(2)(3) on  $(p_i)_i$ . 
\end{notation}

\begin{prop}
Let $n-j = 2$. 
Then if at least one of $\varepsilon_i$ is $-1$, the cycle 
\[
d(\ctext{Y}((p_i)_i))(\varepsilon_1, \varepsilon_2, \dots, \varepsilon_k): (S^{j-1})^2 \rightarrow \overline{\mathcal{K}}_{n,j}
\]
 is a degenerate cycle. More precisely, the cycle is homotopic to a cycle that is constant with respect to at least one of $S^{j-1}$ of the parameter space. A similar result also holds for $d^{\prime}(\ctext{Y}((p_i)_i))$.
 \end{prop}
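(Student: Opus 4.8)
The plan is to imitate the two-loop argument preceding Corollary~\ref{from homotopy group}: first establish the three-loop analogue of Proposition~\ref{presentation spherical cycles}, and then read the degeneracy of the cycle off the fact that one of the two nodes has lost a crossing. Concretely, the first step is to prove: if $\varepsilon_i = -1$ for some $i$, then $P(\ctext{Y}((p_i)_i))(\varepsilon_1,\dots,\varepsilon_k)$ --- and likewise $P^{\prime}(\ctext{Y}((p_i)_i))(\varepsilon_1,\dots,\varepsilon_k)$ --- is equivalent, by moves of ribbon presentations, to a presentation in which at least one of the white disks adjacent to one of the two nodes of $P(\ctext{Y}((p_i)_i))$ carries no crossing, i.e.\ has become a node (these white disks play the role of the white disks of the piece $Q$ in Proposition~\ref{presentation spherical cycles}; see Figure~\ref{3loop-presentation}). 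The proof copies that of Proposition~\ref{presentation spherical cycles}: let $B_i$ be the band whose disk realizes the $i$-th crossing; since $\varepsilon_i = -1$ that disk has no crossing, so the crossing on the next disk along the edge of $\ctext{Y}$ containing $B_i$ is resolved, and this resolution cascades along the whole edge. Tracking the fixed orientations of $\ctext{Y}$, one checks that the cascade terminates at a white disk adjacent to one of the two nodes. As before there are a handful of cases, according to whether $B_i$ lies on one of the spokes $p_4, p_5, p_6$ or on one of the arcs $p_1, p_2, p_3$ of $\ctext{Y}$; an arc case is reduced to a spoke case by first running the cascade across the arc, through the incident type-(II) vertex, and onto the neighbouring edge, exactly as the ``upper edge'' case was reduced to the ``middle edge'' case in the proof of Proposition~\ref{presentation spherical cycles}. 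The exceptional families (2) and (3), where $D(\ctext{Y}((p_i)_i))$ is carried by only three or two oriented lines (Figure~\ref{diagram3}), are handled in the same way with fewer cases.

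Granting this, one of the two nodes, say $N$, has an adjacent white disk with no crossing. Recall that the two $S^{j-1}$ factors of the parameter space of $d(\ctext{Y}((p_i)_i))$ arise, one at each node, by moving one of the two tubes attached at that node around the other (Figure~\ref{additionalfamily}). When the relevant white disk at $N$ becomes a node, the corresponding tube vanishes, the $S^{j-1}$ family produced at $N$ collapses to a point, and $d(\ctext{Y}((p_i)_i))(\varepsilon_1,\dots,\varepsilon_k)$ becomes homotopic to a cycle that is constant on the $S^{j-1}$ factor attached to $N$. The same argument applies verbatim to $d^{\prime}(\ctext{Y}((p_i)_i))$, since the cross-change moves relating $P$ and $P^{\prime}$ only introduce an extra, unperturbed crossing off a node and hence do not obstruct the cascade.

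The main obstacle is the bookkeeping in the first step: with two nodes, six edges, and four vertices of two types, one must verify that the cascade triggered by $\varepsilon_i = -1$ always terminates at a disk adjacent to one of the nodes --- not at a type-(II) vertex, and without ambiguity between the two nodes --- and that this holds uniformly over the three admissible families (1), (2), (3) of parameters $(p_i)_i$. This is a finite check dictated by the orientation conventions on $\ctext{Y}$ and by the recipe defining $D(\ctext{Y}((p_i)_i))$, but it is where essentially all the work lies.
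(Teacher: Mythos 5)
Your proposal is correct and follows essentially the approach the paper intends: the paper states this proposition without proof, leaving it as the evident analogue of Proposition \ref{presentation spherical cycles} and the corresponding $2$-loop degeneracy statement, and your two-step argument (a cascade of crossing resolutions that empties a leaf adjacent to one of the two nodes, followed by the collapse of the $S^{j-1}$ family produced at that node as in Figure \ref{additionalfamily}) is exactly that argument, including the observation that the cross-change moves introduce only unstarred crossings and so do not obstruct it. The bookkeeping you flag does work out with the paper's orientation conventions: a cascade along an edge incoming to a type-(I) vertex runs directly into a node, while a cascade into a type-(II) vertex transfers onto the outgoing edge whose tail hair hosts that vertex and then reaches a node, uniformly in the three families (1)--(3).
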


Hence, when $n-j = 2$, this cycle $d^{\prime}(\ctext{Y}((p_i)_i))$ is homologous to the cycle 
\[
d^{\prime}(\ctext{Y}((p_i)_i))((\varepsilon_i = 1)_i): (S^{j-1})^2 \rightarrow \overline{\mathcal{K}}_{n,j}.
\]
This cycle is in the unknot component since  $P^{\prime}(\ctext{Y}((p_i)_i)$ is the trivial presentation. Furthermore, we have
\begin{prop}
\label{prop for 3loop from homotopy group}
The cycle $d^{\prime}(\ctext{Y}((p_i)_i))((\varepsilon_i = 1)_i)$ is homotopic to a constant function if it is restricted to $S^{j-1}\times \ast$ or $\ast \times S^{j-1}$ . 
\end{prop}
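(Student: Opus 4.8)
The plan is to prove the statement geometrically, by exhibiting a null-homotopy of the restricted $S^{j-1}$-family at the level of thickened, perturbed ribbon presentations, and then upgrading it to a null-homotopy in $\overline{\mathcal{K}}_{n,j}$ compatible with the canonical path to the standard immersion (Definition \ref{path to standard immersion}). The cycle $d^{\prime}(\ctext{Y}((p_i)_i))((\varepsilon_i=1)_i)$ is built from the \emph{trivial} presentation $P^{\prime}(\ctext{Y}((p_i)_i))((\varepsilon_i=1)_i)$ by performing, independently, the two ``tube-around-tube'' motions of Figure \ref{additionalfamily}, one at each of the two nodes of $P(\ctext{Y}((p_i)_i))$ (coming from the two type-(I) vertices of $\ctext{Y}=K_4$). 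Because $\ctext{Y}=K_4$ is symmetric under interchanging its two type-(I) vertices, and because the labeling choices made in the construction do not affect the conclusion (cf.\ the Remark in Section \ref{Construction of general $2$-loop and $3$-loop cycles}), it suffices to treat the restriction to $S^{j-1}\times\ast$; the case $\ast\times S^{j-1}$ follows by swapping the roles of the two nodes.

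First I would unwind the construction with all $\varepsilon_i=1$ and the second node's parameter fixed at the basepoint $\ast$, so that the movable tube $\hat B_{j_2}$ of the second node sits in its standard position. The $S^{j-1}\times\ast$-family is then just the motion of the first node's movable tube $\hat B_{j_1}$ around its partner tube $\hat B_{i_1}$, all other data frozen. The key claim is: with the second node held standard, the trivializing slides and $S4$/$S7$ moves for $P^{\prime}(\ctext{Y}((p_i)_i))$ (Figure \ref{3loop-presentation2}; together with Figures \ref{diagram2}, \ref{diagram3} for the three cases) isotope $\hat B_{i_1}$ into a standardly embedded, unknotted and unlinked tube contained in a ball $B\subset\mathbb{R}^n$ disjoint from the rest of the presentation. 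The point is that the loop of $K_4$ responsible for the essentiality of $\hat B_{i_1}$ passes through the second type-(I) vertex, so freezing the second node at its standard position ``breaks'' that loop; this is exactly where the hypotheses (1)--(3) enter (they control how short this loop is).

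Once $\hat B_{i_1}$ has been split off into such a ball $B$, the family ``$\hat B_{j_1}$ goes around $\hat B_{i_1}$'' is supported inside $B$ and is the standard local model of a $(j-1)$-sphere of long embeddings obtained by turning one trivial tube around another unknotted, unlinked trivial tube. This model extends over the disk $D^j$ and is therefore null-homotopic through long embeddings staying in the unknot component. To promote this to a null-homotopy in $\overline{\mathcal{K}}_{n,j}$ I would observe that the canonical path to the standard immersion used to define $d^{\prime}(\ctext{Y}((p_i)_i))$ is obtained by the resolutions $(m1),(m2)$, which commute with the above splitting-off and with the local model in $B$; hence the lift to $\overline{\mathcal{K}}_{n,j}$ extends over the contraction. (Alternatively, the residual loop of immersion-paths is itself null, being supported in $B$.)

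The main obstacle is the claim in the second paragraph: one must check that the sequence of disk-slides and S-moves trivializing $P^{\prime}(\ctext{Y}((p_i)_i))$, while keeping the second node standard, can be carried out \emph{uniformly} in the first node's parameter $u\in S^{j-1}$ — so that it genuinely yields a homotopy of maps $S^{j-1}\to\overline{\mathcal{K}}_{n,j}$, not merely a fibrewise triviality statement — and that it is compatible with the chosen parametrization. This is a parametrized-isotopy bookkeeping problem. It is cleanest in cases (2) and (3), where the presentation has the fewest crossings; for case (1) I would first resolve, in a parametrized way, all crossings on the edges of $\ctext{Y}$ not incident to the first node, thereby reducing to the situation of cases (2)--(3).
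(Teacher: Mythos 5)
First, a point of comparison: the paper itself states Proposition \ref{prop for 3loop from homotopy group} without a written proof, so there is no argument of the author's to match; the closest template in the paper is the $2$-loop degenerate-cycle argument, where nullity of the swing at a node is deduced from Proposition \ref{presentation spherical cycles}: after presentation moves, one of the two leaves at the node carries no crossing, so one of the two tubes of Figure \ref{additionalfamily} vanishes and the $S^{j-1}$-family contracts. Your proposal has the same overall shape (disengage one branch at the first node, reduce the swing to a local trivial model, then check compatibility with the lift to $\overline{\mathcal{K}}_{n,j}$), and your final steps (nullity of the local model, compatibility with the resolutions $(m1),(m2)$) are unobjectionable. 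But the central step is exactly the nontrivial content of the proposition, and it is asserted rather than proved. Concretely: in the component $(\varepsilon_i=1)_i$ every $\star$-crossing is present, and fixing the second node's parameter at the basepoint does not change the underlying presentation at all --- it only freezes a motion. So nothing is literally ``broken'': the leaf at the end of $\hat{B}_{i_1}$ is still pierced by a band of the presentation, and a tube whose end-sphere has a band running through it cannot be engulfed in a ball disjoint from the rest without first undoing that crossing by a genuine isotopy. The justification you offer --- that the loop of $K_4$ responsible for the essentiality of $\hat{B}_{i_1}$ passes through the second type-(I) vertex and is therefore broken by freezing that node --- conflates a property of the parametrized family with a property of a single embedding. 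What has to be shown is that, with the second node's branch in standard position, the local picture there is split enough that the obstruction to contracting the first node's swing can be slid along the chain and capped off; this requires an explicit chain-following argument in the style of Proposition \ref{presentation spherical cycles} (or a clasper-type lemma), and none is given. Moreover, the trivializing moves you invoke are the $S4$-cancellations of Figure \ref{crosschangedpresentation2} (cf.\ Proposition \ref{propcrosschangemove}), and these take place at the very leaves whose tubes define the swing; carrying them out uniformly in the swing parameter, without destroying the structure being parametrized, is therefore not mere bookkeeping --- it is a second place where the real work is hidden.

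Two secondary problems. The proposed reduction in case (1), ``first resolve, in a parametrized way, all crossings on the edges not incident to the first node,'' is not available inside the $(\varepsilon_i=1)_i$ component: resolving a crossing means passing to a different $S^0$-component of the cycle, not performing an isotopy, and it is precisely the all-crossings-present family that must be contracted. Likewise, the conditions (1)--(3) on $(p_i)_i$ enter the paper as hypotheses under which the chord diagram $D(\ctext{Y}((p_i)_i))$ and the presentation can be constructed and the counting formula applies; attributing to them the role of ``controlling how short the loop is'' in a null-homotopy is speculation, and the symmetry claim reducing $\ast\times S^{j-1}$ to $S^{j-1}\times\ast$ is also loose, since the two nodes sit at the tails of different edges with different hair counts. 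As it stands, the proposal reformulates the proposition (``the orbited tube can be split off once the other node is frozen'') rather than proving it.
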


\begin{coro}
\label{3loop from homotopy group}
The cycle $d^{\prime}(\ctext{Y}((p_i)_i))((\varepsilon_i = 1)_i)$ gives an element of the homotopy group  $\pi_{2(j-1)} (\overline{\mathcal{K}}_{j+2, j})_{\iota}$. \footnote{Note that Proposition \ref{prop for 3loop from homotopy group} is enough for obtaining Cororally \ref{3loop from homotopy group}. We would need to check compatibility, if the parameter space were the product of more than two spheres. This is the case when $g\geq 4$.}
\end{coro}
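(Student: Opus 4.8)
The plan is to deduce the corollary from Proposition~\ref{prop for 3loop from homotopy group} by the elementary fact that a map out of a product of two spheres which is freely null-homotopic on each of the two axis subspheres factors, up to homotopy, through the collapse onto the top cell $S^{2(j-1)}$. Write $f := d^{\prime}(\ctext{Y}((p_i)_i))((\varepsilon_i = 1)_i) : (S^{j-1})^2 \to \overline{\mathcal{K}}_{j+2,j}$. Since $P^{\prime}(\ctext{Y}((p_i)_i))((\varepsilon_i = 1)_i)$ is the trivial presentation, the image of $f$ lies in the unknot component $X := (\overline{\mathcal{K}}_{j+2,j})_{\iota}$, which is path-connected; put $x_0 := f(\ast,\ast)$ and base $(S^{j-1})^2$ at $(\ast,\ast)$.

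First I would promote Proposition~\ref{prop for 3loop from homotopy group} to a based statement. It tells us that $f|_{S^{j-1}\times\ast}$ and $f|_{\ast\times S^{j-1}}$ are freely null-homotopic; tracing the basepoint along such a null-homotopy and applying the resulting change-of-basepoint isomorphism (which carries the class of a constant map to $0$) shows each represents $0$ in $\pi_{j-1}(X,x_0)$, so each restriction admits a null-homotopy rel $(\ast,\ast)$. These two null-homotopies agree on the common point $(\ast,\ast)$, where both are constant at $x_0$, so they glue to a based null-homotopy of the restriction of $f$ to $S^{j-1}\vee S^{j-1}\subset(S^{j-1})^2$, the $(j-1)$-skeleton in the product CW structure.

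Next, the inclusion $S^{j-1}\vee S^{j-1}\hookrightarrow(S^{j-1})^2$ is a CW pair, hence a cofibration, so the homotopy extension property produces a map $f^{\prime}\simeq f$ whose restriction to $S^{j-1}\vee S^{j-1}$ is the constant map at $x_0$. Then $f^{\prime}$ factors as the collapse $(S^{j-1})^2\to(S^{j-1})^2/(S^{j-1}\vee S^{j-1})=S^{j-1}\wedge S^{j-1}\cong S^{2(j-1)}$ followed by a map $g:S^{2(j-1)}\to X$, which represents a class $[g]\in\pi_{2(j-1)}((\overline{\mathcal{K}}_{j+2,j})_{\iota})$. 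Because the collapse induces an isomorphism on $H_{2(j-1)}$ and $f\simeq f^{\prime}$, the homology class carried by the cycle $d^{\prime}(\ctext{Y}((p_i)_i))((\varepsilon_i=1)_i)$ equals the Hurewicz image of $[g]$; this exhibits the desired element of the homotopy group.

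The one point that needs care is the compatibility of the two axis null-homotopies at the wedge point $(\ast,\ast)$, and this is exactly why the argument succeeds for a product of precisely two spheres: it is harmless here because any free null-homotopy of a sphere in a path-connected space refines to a based one, so both null-homotopies may be taken rel $(\ast,\ast)$ and concatenated along the constant path at $x_0$. For a product of three or more spheres one would instead have to null-homotope $f$ over the whole fat wedge, with coherent choices on the two-fold sub-products; that coherence is the compatibility flagged in the footnote and is not required here.
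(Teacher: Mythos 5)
Your proposal is correct and follows essentially the same route as the paper, which leaves the deduction implicit: the paper simply asserts (via the footnote) that Proposition \ref{prop for 3loop from homotopy group} plus the fact that the cycle lies in the unknot component suffices, and your argument spells out exactly that standard step — basing the two axis null-homotopies, gluing over the wedge, and collapsing $(S^{j-1})^2$ to $S^{2(j-1)}$ via the homotopy extension property. Your closing remark about the fat wedge also matches the paper's footnote on why no extra compatibility is needed for two spheres.
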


%\subsection{Toward relationship with clasper surgery}

\section{Non-triviality of $2$-loop and $3$-loop cycles of $\overline{\mathcal{K}}_{n,j}$}
\label{Pairing for general cases}

In \cite{Yos 3}, we established  a zigzag of cochain maps
\[
HGC_{n,j} \xleftarrow[p_1]{\simeq } PGC^{\prime}_{n,j} \xrightarrow[p_2]{\simeq} PGC_{n,j} \xleftarrow[p_3]{\simeq} DGC_{n,j} \xrightarrow[\overline{I}]{} \Omega^{\ast}_{dR}(\overline{\mathcal{K}}_{n,j}).
\]
that connects the hairy graph complex $HGC_{n,j}$ with the modified graph complex, the \textit{decorated graph complex} $DGC_{n,j}$. The intermediate complexes $PGC^{\prime}_{n,j}$ and $PGC_{n,j}$ are called the \textit{plain graph complexes}. The righmost map is given by the modified configuration integrals, which is a cochain map at least when $j \geq 3$.

In this section, we perform pairing between general $2$-loop cycles $d(\Theta(p,q,r))$ of $\overline{\mathcal{K}}_{n,j}$ in Section \ref{Construction of general $2$-loop and $3$-loop cycles} and cocycles obtained by the $2$-loop part of the modified configuration space integrals $\overline{I}$. The pairing is reduced to pairing between graphs and chord diagrams (counting formula). Theorem \ref{main theorem on general cases} is shown in this section. 

\subsection{Review of the modified configuration space integrals}

\begin{definition}\cite{Yos 3}
\label{plain graphs}
\textit{Plain graphs} have three types of vertices
(white \begin{tikzpicture}[baseline = -3pt]  \draw (0, 0) circle (0.05); \end{tikzpicture},
external black \begin{tikzpicture} [baseline = -3pt]  \draw (0, 0) circle (0.05) [fill={rgb, 255:red, 0; green, 0; blue, 0}, fill opacity =1.0]; \end{tikzpicture},
internal black \begin{tikzpicture}  [baseline = -2pt]   \draw (0, 0) rectangle (0.1, 0.1) [fill={rgb, 255:red, 0; green, 0; blue, 0}, fill opacity =1.0]; \end{tikzpicture})
and two types of edges
(dashed \begin{tikzpicture}[x=0.75pt,y=0.75pt,yscale=0.3,xscale=0.3, baseline=-3pt]  \draw [dash pattern={on 4pt off 3pt}, line width = 1pt]   (0,0)--(90,0); \end{tikzpicture}, 
solid \begin{tikzpicture} \draw [x=0.75pt,y=0.75pt,yscale=0.3,xscale=0.3, baseline=-3pt]  [line width = 1pt](0,0)--(90,0); \end{tikzpicture}). 
White vertices have at least three dashed edges and no solid edge, while internal black vertices have at least three solid edges and no dashed edge. External black vertices have an arbitrary number of solid and dashed edges. We assume that each component has at least one external black vertex. 
Double edges and loop edges are allowed. 

A plain graph is \textit{admissible} if every external black vertex has at least one dashed edge. See Figure \ref {figofplaingraph}.
A plain graph is a \textit{good plain graph} if it has no internal black vertex and the solid componet is a disjoint union of broken lines such as
\begin{tikzpicture}[x=0.5pt,y=0.5pt,yscale=0.5,xscale=0.5, baseline=0pt, line width = 1pt] 
% dashed edge
\draw  [dash pattern = on 3pt off 2pt] (0, 0)-- (0,50);
\draw  [dash pattern = on 3pt off 2pt] (50, 0)-- (50,50);
\draw  [dash pattern = on 3pt off 2pt] (100, 0)-- (100,50) ;
\draw [dash pattern = on 3pt off 2pt]  (150, 0)--(150,50);
\draw  [dash pattern = on 3pt off 2pt] (200, 0)--(200,50);
\draw [dash pattern = on 3pt off 2pt]  (250, 0)--(250,50);

% solid edge
\draw  (0, 0)-- (100,0);
\draw  (150, 0)-- (200,0);

%black vertex
\draw [fill = black] (0, 0) circle (7);
\draw [fill = black] (50, 0) circle (7);
\draw [fill = black]  (100, 0) circle (7);

\draw [fill = black] (150, 0) circle (7);
\draw [fill = black](200, 0) circle (7);

\draw [fill = black]  (250, 0) circle (7);
\end{tikzpicture}. 

 \begin{figure}[htpb]
   \centering
    \includegraphics [width =7cm] {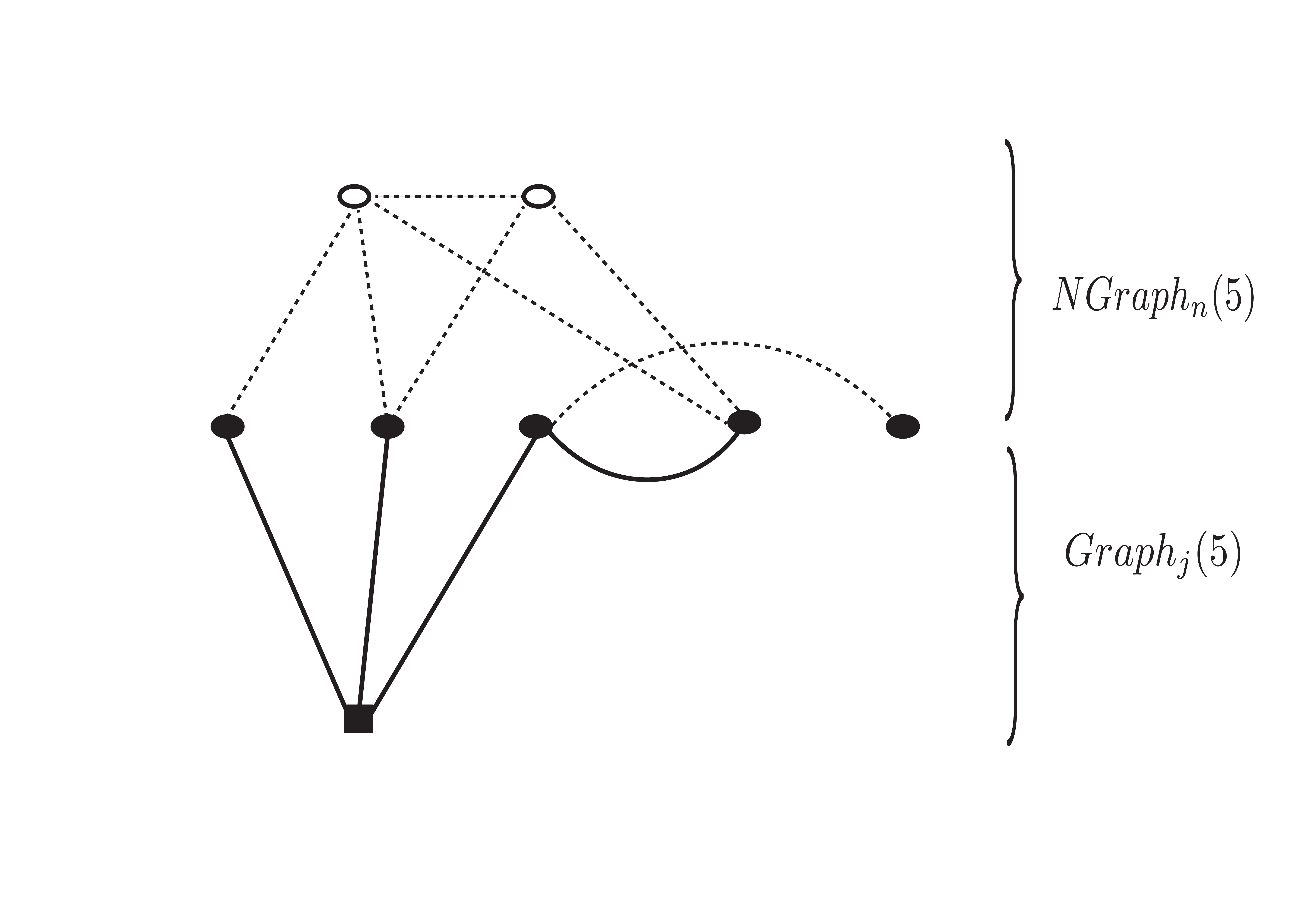}
    \caption{Example of an admissible (but not good) plain graph}
    \label{figofplaingraph}
\end{figure}
\end{definition}

\begin{notation}
Write $E(\Gamma) (= E_{\theta}(\Gamma) \cup E_{\eta}(\Gamma))$, $E_{\theta}(\Gamma)$, $E_{\eta}(\Gamma)$ for the sets of edges, dashed edges and solid edges.
Write $V(\Gamma) (= W(\Gamma) \cup B(\Gamma))$, $W(\Gamma)$, $B(\Gamma) (= B_i(\Gamma) \cup B_e(\Gamma))$, $B_i(\Gamma)$ and $B_e(\Gamma)$ for the set of vertices, white vertices, black vertices, internal black vertices and external black vertices. 
\end{notation}

 \begin{definition}\cite{Yos 3}
 A label of a plain graph consists of a choice of an ordering of the set
 \[
 o(\Gamma) = E(\Gamma) \cup V(\Gamma)
 \]
 and a choice of orientations of edges. Each label gives an orientation of the underlying graph.
   \end{definition}
   
Admissible plain graphs generate  the plain graph complexes $PGC_{n,j}$ and  $PGC^{\prime}_{n,j}$.  The latter complex $PGC^{\prime}_{n,j}$ has a relation that graphs with double or loop edges vanish.
The \textit{order}, or \textit{complextity}, of plain grahs is defined by  $k(\Gamma) = |E_{\theta}(\Gamma)| - |W(\Gamma)|$. The plain graph complexes have a decomposition with respect to the order and the first Betti number. 
   
\begin{definition} \cite{Yos 3}
A \textit{decorated graph} is a plain graph whose external black vertices are decorated by an element of the acyclic bar complex $Z$ of an algebra called the hidden face dg algebra \cite{Yos 3}. A decorated graph is labeled if its plain part is labeled. Let  $\Gamma$ be a labeled decorated graph. We write the plain part as $P(\Gamma)$ and write the decorated part as $D(\Gamma) = z_1 \otimes \cdots \otimes z_l$, if $\Gamma$ has $l$ external black vertices and the $i$-th external vertex is decorated by $z_i \in Z$. We write $\Gamma = P(\Gamma) \otimes D(\Gamma)$. See Figure \ref{figofdecoratedgraph}. 

A decoration $z$ of an external black vertex is called \textit{trivial} if $z$ is obtained by scalar multiplication of the unit of $A_{n,j}$. A decorated graph is admissible if any external black vertex of it has either a dashed edge or a non-trivial decoration. 
\end{definition}

Admissible decorated graphs generate the decorated graph complex $DGC_{n,j}$. (See \cite{Yos 3} for the definition of $g$ and $k$ of decorated graphs.)

\begin{figure}[htpb]
   \centering
    \includegraphics [width =7cm] {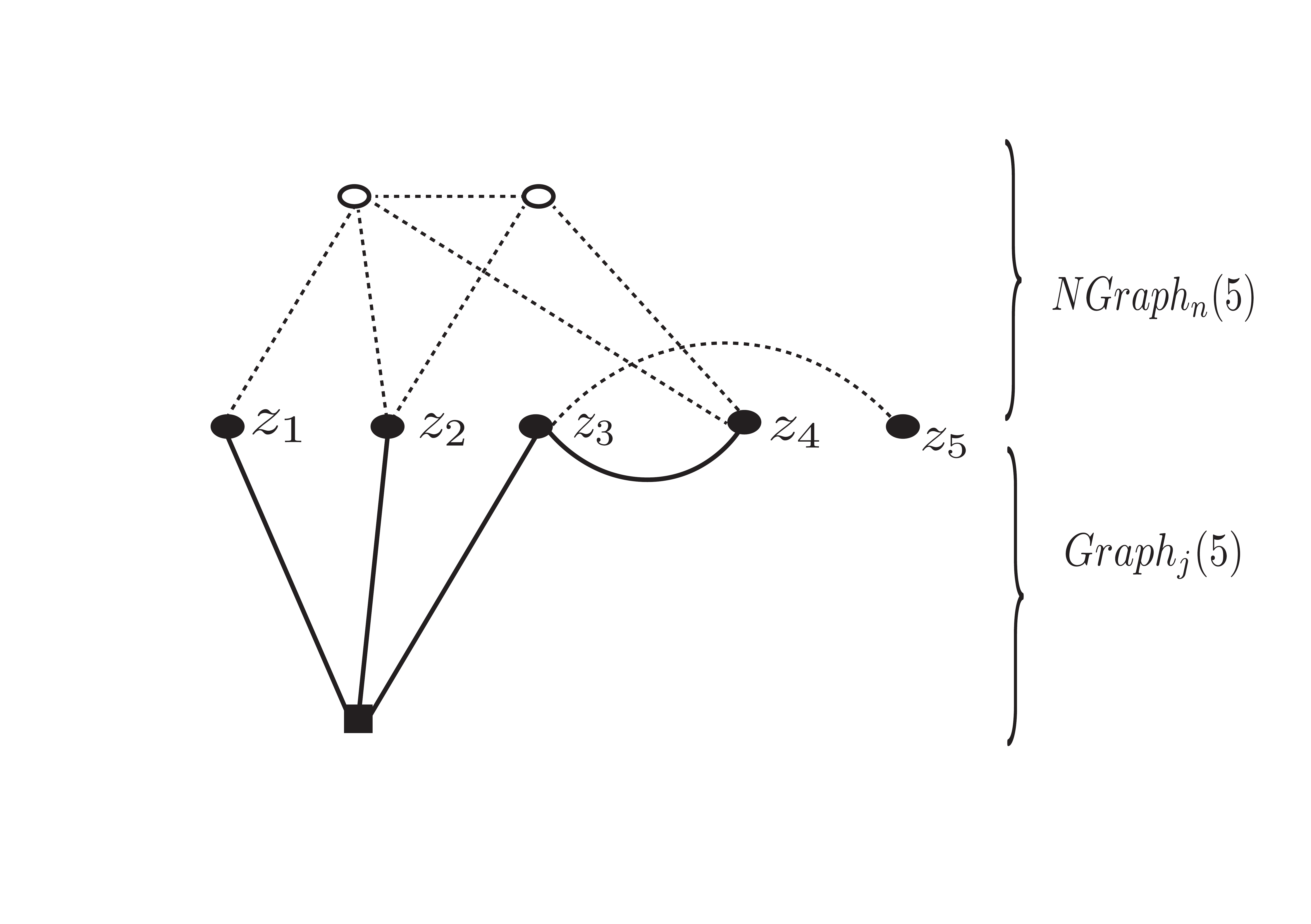}
    \caption{Example of an admissible decorated graph}
    \label{figofdecoratedgraph}
\end{figure}

The following is Main Theorem of \cite{Yos 3}.
\begin{theorem}\cite{Yos 3}
\label{main theorem 1 on hidden faces}
When $n-j \geq 2$ and $j \geq 3$, there  exists a cochain map
\[
\overline{I}: DGC_{n,j} \rightarrow \Omega_{dR} (\overline{\mathcal{K}}_{n,j}). 
\] 
It holds even when $j = 2$, if $\overline{I}$ is restricted to the subspace of $DGC_{n,j}$ generated by decorated graphs with the first Betti number $g \leq 3$. 
\end{theorem}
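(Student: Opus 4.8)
The plan is to construct $\overline{I}$ as a fibrewise integration (pushforward) along configuration–space bundles over $\overline{\mathcal{K}}_{n,j}$, and to prove the cochain property by the generalized Stokes theorem on manifolds with corners; the genuinely new ingredient is that the decorations of the external black vertices are engineered to absorb precisely those boundary contributions which obstruct the classical Bott--Taubes construction. First I would recall that a point of $\overline{\mathcal{K}}_{n,j}$ is a one–parameter family $\{\overline{K}_t\}_{t\in[0,1]}$ of long immersions with $\overline{K}_0$ an embedding and $\overline{K}_1=\iota$, and for each admissible decorated graph $\Gamma$ with plain part $P(\Gamma)$ fix a bundle $\pi_\Gamma\colon \mathrm{Conf}_\Gamma\to\overline{\mathcal{K}}_{n,j}$ whose fibre is a Fulton--MacPherson--Axelrod--Singer compactification of the space of $|B_e(\Gamma)|$ points on the source $\mathbb{R}^j$, transported along the immersion homotopy $\{\overline{K}_t\}$, together with $|W(\Gamma)|+|B_i(\Gamma)|$ points in $\mathbb{R}^n$, all standard outside a ball. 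To each dashed edge one assigns the pullback of the normalized volume form on $S^{n-1}$ under the appropriate Gauss-type direction map, to each solid edge the pullback of the analogous form built from the derivative of the immersion (the propagators of the BCR formalism), and to each external black vertex the fibrewise form obtained by evaluating its decoration $z_i\in Z$ against the hidden-face data of the family; their product is $\omega(\Gamma)$, and $\overline{I}(\Gamma):=(\pi_\Gamma)_*\,\omega(\Gamma)$, extended linearly. One then checks well-definedness up to chain homotopy (independence of propagators) and that $\deg\overline{I}(\Gamma)$ matches $\deg\Gamma$ by a dimension count on the fibre.

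The second step is to compute, via the Stokes formula for fibre integration over a manifold with corners,
\[
d\,(\pi_\Gamma)_*\omega(\Gamma)\;=\;(\pi_\Gamma)_*\,d\omega(\Gamma)\;\pm\;(\pi_{\partial\Gamma})_*\,\omega(\Gamma).
\]
Since the propagators are closed, $d\omega(\Gamma)$ reduces to the terms produced by the internal (bar) differential of the decorations, which match the decoration-changing part of the differential $\delta$ on $DGC_{n,j}$. The boundary term is a sum over the codimension-one faces of the compactified fibre, of three kinds: \emph{principal faces}, where two vertices collide away from the source or one vertex limits onto the source --- these reproduce the edge-contraction part of $\delta P(\Gamma)$; \emph{faces at infinity}, where vertices escape to infinity --- these vanish because the family is standard outside a ball; and \emph{hidden faces}, where three or more vertices collide or a positive-dimensional subconfiguration degenerates. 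The classical vanishing lemmas (antipodal symmetry of the propagator, dimensional defect of the collapsed locus, fixed-point-free involutions on the collapsing subconfiguration) kill most hidden faces.

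The crux is the third step, the surviving hidden faces: these are exactly the ones along which a cluster of vertices limits onto an external black vertex, and the whole design of $DGC_{n,j}$ is that they are organized by the hidden-face dg algebra $A_{n,j}$. Here I would (i) identify the product of $A_{n,j}$ with the operation of merging two collapsing clusters, (ii) prove that the reduced augmented bar complex $Z$ of $A_{n,j}$ is acyclic --- which is precisely what makes decorations always ``integrable'' and the whole construction consistent --- and (iii) match, face by face, the fibre integral over each surviving hidden face with a term of the bar differential of a decoration tensored with a propagator integral over the complementary configuration. Combining steps two and three gives the desired identity $d\,\overline{I}(\Gamma)=\overline{I}(\delta\Gamma)$ on admissible decorated graphs, and linearity then finishes the proof.

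I expect step three to be the main obstacle: the delicate compatibility of the compactification combinatorics with the bar construction, and the acyclicity of $Z$, are the technical heart of the whole argument. This is also where the hypothesis enters. When $j\geq 3$, every relevant anomalous face has enough codimension to be treated uniformly by the vanishing lemmas above; when $j=2$, extra low-codimension degenerations of configurations on the source appear, and the face-matching argument only closes when the first Betti number satisfies $g\leq 3$, since this bounds the number of independent cycles that can pass through a collapsing cluster and hence the complexity of the degenerations that can occur. Finally, although only the existence of the cochain map $\overline{I}$ is asserted here, it is this map that, together with the already-constructed quasi-isomorphisms $PGC'_{n,j}\to HGC_{n,j}$, $PGC'_{n,j}\to PGC_{n,j}$ and $DGC_{n,j}\to PGC_{n,j}$, yields the pairing between $HGC_{n,j}$ and $H_*(\overline{\mathcal{K}}_{n,j})$ used in the sequel.
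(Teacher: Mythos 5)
This statement is not proved in the paper at all: it is quoted verbatim as the Main Theorem of \cite{Yos 3} (``The following is Main Theorem of \cite{Yos 3}''), so there is no in-paper argument to measure your proposal against. What you have written is a plausible reconstruction of the architecture that \cite{Yos 3} is built on --- configuration space integrals via pushforward along compactified configuration bundles, Stokes' theorem on the compactification, principal faces reproducing the graph differential, vanishing lemmas for most hidden faces, and decorations drawn from the acyclic bar complex of the hidden face dg algebra engineered to cancel the surviving anomalous faces --- and at that level of description it is consistent with the cited construction and with the way the present paper uses $\overline{I}$ (e.g.\ Proposition \ref{keypropforpairing} and the zigzag through $PGC_{n,j}$, $PGC'_{n,j}$).

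However, as a proof it has a genuine gap, which you yourself flag: everything that makes the theorem a theorem is deferred. Your step three items (i)--(iii) --- the identification of the product of $A_{n,j}$ with merging of collapsing clusters, the acyclicity of $Z$, and the face-by-face matching of surviving hidden-face integrals with the bar differential of the decorations --- are precisely the content of \cite{Yos 3}, and none of them is carried out or even reduced to a checkable statement here. In particular, your explanation of the hypothesis (``when $j=2$ extra low-codimension degenerations appear, and $g\leq 3$ bounds the number of independent cycles through a collapsing cluster'') is a heuristic guess, not an argument: you do not exhibit which faces fail the vanishing lemmas when $j=2$, nor why loop order $\leq 3$ restores the cancellation, and nothing in your setup forces that specific numerical threshold. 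There are also unverified analytic points glossed over (that the fibrewise forms attached to decorations exist and have the right degree, that the pushforward converges on the corners, and that $\deg\overline{I}(\Gamma)=\deg\Gamma$), so the proposal should be read as a roadmap to the proof in \cite{Yos 3} rather than an independent proof of the statement.
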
   
   
 \begin{prop}\cite{Yos 3}
\label{keypropforpairing}
Let $\Gamma$ be a decorated graph of order $\leq k$.
Assume that the plain part $P(\Gamma)$ has at least $2k$ external vertices with dashed edges. 
Then $\Gamma$ is a plain graph without white vertices. Moreover, $\Gamma$ has exactly $k$ dashed edges and $2k$ external black vertices. 
\end{prop}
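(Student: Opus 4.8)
The plan is to run an elementary double-counting argument on the endpoints of the dashed edges and then feed the resulting equalities into the bookkeeping of the order (complexity) of a decorated graph from \cite{Yos 3}.

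First I would count dashed-edge endpoints in two ways. Each of the $|E_{\theta}(\Gamma)|$ dashed edges contributes two endpoints, so the total number of dashed endpoints is $2|E_{\theta}(\Gamma)|$ (a dashed loop counted with multiplicity $2$). On the other hand, a dashed endpoint sits either on a white vertex or on an external black vertex, never on an internal black vertex. Since every white vertex carries at least three dashed edges, and, by hypothesis, at least $2k$ external black vertices carry at least one dashed edge, this yields
\[
2|E_{\theta}(\Gamma)| \;\geq\; 3\,|W(\Gamma)| + 2k .
\]
At the same time, the order of $\Gamma$ is at least the order of its plain part, which is $|E_{\theta}(\Gamma)| - |W(\Gamma)|$; combined with the hypothesis that $\Gamma$ has order $\leq k$ this gives $|E_{\theta}(\Gamma)| \leq k + |W(\Gamma)|$. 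Substituting the two inequalities into one another produces $2k + 2|W(\Gamma)| \geq 3|W(\Gamma)| + 2k$, hence $|W(\Gamma)| \leq 0$, so $\Gamma$ has no white vertex. (Internal black vertices play no role here, and the statement does not claim there are none.)

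With $W(\Gamma) = \emptyset$, the order of the plain part equals $|E_{\theta}(\Gamma)|$, so $|E_{\theta}(\Gamma)| \leq k$; and the endpoint inequality now reads $2|E_{\theta}(\Gamma)| \geq 2k$, i.e.\ $|E_{\theta}(\Gamma)| \geq k$. Therefore $|E_{\theta}(\Gamma)| = k$: there are exactly $k$ dashed edges and hence exactly $2k$ dashed endpoints, all lying on external black vertices. Since at least $2k$ external black vertices carry a dashed edge while only $2k$ dashed endpoints are available, equality is forced throughout: exactly $2k$ external black vertices carry a dashed edge, each carrying precisely one (so no double or loop dashed edges occur among them). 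Finally, $|E_{\theta}(\Gamma)| = k$ saturates the order bound, so the decoration of $\Gamma$ contributes $0$ to the order; by the definition of the order of a decorated graph in \cite{Yos 3} this forces all decorations to be trivial, i.e.\ $\Gamma$ is a plain graph. Admissibility of $\Gamma$ then requires every external black vertex to carry a dashed edge or a non-trivial decoration, and since the decorations are trivial, every external black vertex carries a dashed edge; hence the number of external black vertices is exactly $2k$.

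I expect the only real subtlety to be the last step — translating ``$|E_{\theta}(\Gamma)|$ saturates the order bound'' into ``all decorations are trivial,'' which requires unwinding the precise definition of the complexity of a decorated graph (and of the grading on the bar complex $Z$) from \cite{Yos 3}. Everything else is the elementary inequality manipulation above, so the heart of the argument is genuinely the dashed-endpoint count.
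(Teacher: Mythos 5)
The paper does not actually prove this proposition --- it is imported wholesale from \cite{Yos 3} --- so there is no in-paper argument to compare against; judged on its own terms, your double count is correct and is surely the intended mechanism. The two inequalities $2|E_{\theta}(\Gamma)| \geq 3|W(\Gamma)| + 2k$ (endpoint count, using that white vertices carry at least three dashed edges and internal black vertices carry none, per Definition \ref{plain graphs}) and $|E_{\theta}(\Gamma)| - |W(\Gamma)| \leq k$ (order bound on the plain part) do force $W(\Gamma) = \emptyset$ and then $|E_{\theta}(\Gamma)| = k$, exactly as you argue. Two points, however, rest on input this paper does not supply. First, the step ``the order bound is saturated, hence all decorations are trivial'' needs the precise definition of the order of a decorated graph from \cite{Yos 3}: you must know that it equals the plain-part order plus a decoration contribution that is nonnegative and vanishes only for trivial decorations (equivalently, that non-trivial elements of $Z$ have strictly positive order). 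You flag this yourself, and it is genuinely the only non-formal ingredient --- the proposition would be false without some such property, since otherwise one could attach order-zero non-trivial decorations to a plain graph satisfying the hypotheses --- but it cannot be verified from the present paper, only cited. Second, your conclusion ``exactly $2k$ external black vertices'' uses admissibility of $\Gamma$ (an external black vertex with no dashed edge and trivial decoration is forbidden); the statement says only ``decorated graph,'' but since $DGC_{n,j}$ is generated by admissible decorated graphs this is clearly the intended reading, and you should state that you are assuming it. With those two caveats made explicit, your argument is complete and self-contained modulo the definitions in \cite{Yos 3}.
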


\subsection {The counting formula of configuration space integrals}

We recall the counting formula established in \cite{Yos 1}. In \cite{Yos 1}, this formula is stated in terms of BCR graphs, but it is applicable to plain graphs. First, for convenience, we recall some notation to state the counting formula.

\begin{definition}[Graph-chord pairing \cite{Yos 1}]
\label{Graph-chord pairing}
Let $C$ be a chord diagram on $s$ oriented lines of order $k$.  We write $V(C)$ and $E(C)$ for the set of vertices and the set of chords, respectively. Let $\Gamma$ be a labeled plain graph of order $k$.
Then the pairing $<\Gamma, C>$ is defined by counting as follows. First, we only count when
\begin{itemize}
\item [(I)] $ |B(\Gamma)| (= |V(C)|)   = 2k $.
\end{itemize}
We count permutations $\sigma : |B(\Gamma)| \rightarrow  |V(C)| $ which satisfies (I) and all the followings.
\begin{itemize}
\item [(II)] $ \sigma$ induces the map  $ \overline{\sigma} : E_{\theta}(\Gamma) \rightarrow E(C)$.
\item [(III)] If two black vertices $v_1$ and $v_2$ are in the same solid component, $\sigma(v_1)$ and $\sigma(v_2)$ are on the same oriented line.
\item [(IV)] If a vertex $w$ of $V(C)$ is not on the $x$-axis, $w$ has an ingoing solid edge: there exists a vertex $w^{\prime}$ lower on the same oriented line such that $\sigma^{-1}(w)$ and $\sigma^{-1}(w^{\prime})$ are connected by some solid edge of $\Gamma$.
\end{itemize}
The sign of the counting is $+1$ if and only if the orientation of $\Gamma$ coincides with the induced orientation determined by the induced label from $\sigma$. (See Definition~\ref{Induced color on BCR diagrams}.)
\end{definition}

\begin{definition}[Induced label on graphs \cite{Yos 1}]
\label{Induced color on BCR diagrams}
Let $\sigma: B(\Gamma) \rightarrow V(C) $ satisfy all the conditions (I, II, III, IV) of Definition \ref{Graph-chord pairing}.
Then $\sigma$ defines an induced label on the underlying graph $\overline{\Gamma}$:
we order black vertices using $\sigma$ and the ordering of $V(C)$. 
We orient solid edges so that the orientations are compatible with oriented lines.
We orient dashed edges so that the orientations are compatible with chords.
We order edges in the following order.
\begin{itemize}
\item [(1)] Ingoing solid edge (if it exists) from the initial vertex of the $i$th chord ($i = 1, \dots, k$). There are $(g-1)$ solid edges ordered by this first step, where $g-1=k-s$.
\item [(2)] The $\overline{\sigma}^{-1}(i)$th dashed edge and the ingoing solid edge from the target point of the $i$th chord ($i = 1, \dots, k$).
\end{itemize}
\end{definition}

\begin{notation}
Let $C = (\{t_i\}_{i = 1, \dots, s}, \{p_i\}_{i=1, \dots, k})$ be a chord diagram on oriented lines. 
Let $G(C)$ be the set of admissible plain graphs (without a label) mapped on $C$ satisfying all the conditions of Definition \ref{Graph-chord pairing}. 
Let $G^{\prime}(C)$ be the set of good plain graphs (without a label) mapped on $C$ satisfying all the conditions of Definition \ref{Graph-chord pairing}. 
The set $G(C)$ consists of $\sum_{i=1}^s  2^{t_i -1}$ graphs while $G^{\prime}(C)$ consists of $\sum_{i=1}^s  (t_i)!$ graphs. 
\end{notation}

\begin{theorem}[Counting formula \cite{Yos 1}]
\label{original counting formula}
Let $H = \sum \frac{w(\Gamma_i)}{|\text{Aut}(\Gamma_i)|} \Gamma_i$ be a linear combination of admissible plain graphs without internal black vertices,  of order $\leq k$ and of top degree. 
Let $C$ be a chord diagram on oriented lines of order $k$, with $r(C)$ chords negative sign. Let $\psi$ be cycle constructed from $C$.
Then 
\begin{equation*}
< I(H), \psi> = (-1)^{r(C)} \sum_{\overline{\Gamma} \in G(C)}  w(\Gamma)  s(\Gamma, \overline{\Gamma}).
\end{equation*}
Here $\Gamma$ is a labeled graph in $H$ such that the underlying graph of $\Gamma$ is $\overline{\Gamma}$.
The sign $s(\Gamma, \overline{\Gamma}) \in \{+1, -1\}$ is $+1$ if and only if the orientation of $\Gamma$ coincides with that of $\overline{\Gamma}$ with the induced label.
Similarly, if $ \sum \frac{w(\Gamma_i)}{|\text{Aut}(\Gamma_i)|} \Gamma_i$ is a linear combination of good plain graphs of order $\leq k$ and of top degree, then 
\begin{equation*}
 <I(H), \psi > = (-1)^{r(C)} \sum_{\overline{\Gamma} \in G^{\prime}(C)}  w(\Gamma)   s(\Gamma, \overline{\Gamma}).
\end{equation*}
\end{theorem}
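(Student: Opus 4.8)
The plan is to adapt the proof of the BCR-graph counting formula in \cite{Yos 1}, checking that replacing BCR graphs by plain graphs changes nothing essential. Expand
\[
< I(H), \psi > \;=\; \sum_i \frac{w(\Gamma_i)}{|\mathrm{Aut}(\Gamma_i)|}\int_{\psi^{\ast}\mathrm{Conf}_{\Gamma_i}} \omega(\Gamma_i),
\]
where $\psi^{\ast}\mathrm{Conf}_{\Gamma}$ is the bundle over the parameter space of $\psi$ (a product of copies of $S^{n-j-2}$ and $S^{j-1}$) whose fibre consists of configurations on the embedded copy $\psi(b)\subset\mathbb{R}^n$ labelled by the black vertices of $\Gamma$, together with auxiliary unit vectors for the white vertices, and $\omega(\Gamma) = \bigwedge_{e\in E(\Gamma)}\phi_e^{\ast}(\mathrm{vol})$ is the product of pullbacks of normalized volume forms under the direction maps of the dashed and solid edges. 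Because $H$ is of top degree, each $\omega(\Gamma_i)$ is a form of top degree on the compact manifold with corners $\psi^{\ast}\mathrm{Conf}_{\Gamma_i}$, so --- once the boundary and hidden faces are shown to contribute nothing --- the integral equals the degree of the combined evaluation map $\Phi_{\Gamma_i}$ into the product of spheres, i.e.\ a signed count of $\Phi_{\Gamma_i}^{-1}(x)$ for a regular value $x$.

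First I would pick the regular value $x$ adapted to the geometry of the ribbon presentation $P$ underlying $\psi$: since $P_{\mathbf{v}}$ is a thin neighbourhood of a configuration of tubes and nodes laid out along the oriented lines of $C$, choosing each target direction close to the core direction of the corresponding tube forces every black vertex of a preimage to lie near a crossing or a node. A preimage then determines, and is determined by, a bijection $\sigma\colon B(\Gamma)\to V(C)$, and the realizability of $\sigma$ is exactly conditions (I)--(IV) of Definition \ref{Graph-chord pairing}: (I) the vertex counts match, $|B(\Gamma)| = |V(C)| = 2k$; (II) a dashed edge, whose direction is pinned near a tube--disk crossing, maps to a chord of $C$; (III) a solid edge, confined to one tube, keeps its endpoints on one oriented line; (IV) a vertex off the $x$-axis must receive an incoming solid edge, as that is how the construction threads a tube through a node. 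The absence of internal black vertices in $H$ removes any freedom from interior configuration points, so every vertex is placed at a crossing or node. This is the localization step of \cite{Yos 1}; plain graphs carry the same two edge types as BCR graphs (dashed $\leftrightarrow$ short propagator, solid $\leftrightarrow$ long propagator), so the argument transfers directly. For the good-plain-graph case the tubes are honestly embedded, so preimages correspond instead to the $\sum_i(t_i)!$ ordered placements counted by $G^{\prime}(C)$, giving the second displayed identity.

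Next I would compute the sign of each preimage, from three sources: the orientations of the parameter space and of the configuration fibre against the chosen product orientations; the local degree of $\Phi_{\Gamma}$ at the preimage; and the orientation of the labelled graph $\Gamma$ against the canonical one. Tracking these as in \cite{Yos 1}, each negatively-signed chord of $C$ reverses one propagator direction and hence contributes an overall $-1$, producing the prefactor $(-1)^{r(C)}$, while the residual sign is precisely $s(\Gamma,\overline{\Gamma})$, the comparison of the orientation of $\Gamma$ with the induced label of Definition \ref{Induced color on BCR diagrams}. Grouping the preimages by their underlying graph $\overline{\Gamma}\in G(C)$ (resp.\ $G^{\prime}(C)$) and absorbing $|\mathrm{Aut}(\Gamma)|$ yields the formula.

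The main obstacle is ruling out spurious contributions. First, one must show that the faces of the compactified configuration spaces (principal, hidden, anomalous, and ``at infinity'') contribute nothing to the top-degree integral; here I would invoke the vanishing statements already underlying the cochain-map property of $\overline{I}$ in \cite{Yos 3} --- admissibility kills the hidden faces --- together with the top-degree hypothesis and the explicit product structure of $\psi$ to dispose of the remaining faces, as in \cite{Yos 1}. Second, one must verify that no preimage of the regular value $x$ lies away from the neighbourhoods of the crossings and nodes, a transversality/genericity argument in the spirit of \cite{Yos 1} that uses that the tubes of $P$ can be made arbitrarily thin. Since the combinatorial heart of the statement is a straight translation of \cite{Yos 1} from BCR to plain graphs, the genuinely new work is confined to confirming that these vanishing and genericity inputs persist in the plain/decorated setting.
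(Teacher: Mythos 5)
The paper offers no proof of Theorem \ref{original counting formula} here: it is imported from \cite{Yos 1} with the remark that the BCR-graph argument applies verbatim to plain graphs, and your sketch follows essentially that same route --- rewrite $<I(H),\psi>$ as a top-degree integral over the pulled-back configuration bundle, localize it near the crossings and nodes of the (thin, slightly perturbed) ribbon presentation, identify preimages of a well-chosen regular value with the bijections $\sigma$ satisfying conditions (I)--(IV) of Definition \ref{Graph-chord pairing}, and bookkeep signs to produce the factor $(-1)^{r(C)}s(\Gamma,\overline{\Gamma})$, grouping preimages by underlying graph to absorb $|\text{Aut}(\Gamma)|$. The one misdirected point is the appeal to the hidden-face/cochain-map results of \cite{Yos 3}, which are not what is needed (the theorem concerns the unmodified $I$ applied to an arbitrary top-degree combination, not necessarily a cocycle, and no closedness is used); the required input is precisely the Localizing and Pairing lemmas of \cite{Yos 1}, i.e.\ the genericity/thin-tube localization that you also invoke, so this is a redundancy rather than a gap.
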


\subsection{The counting formula for $2$-loop (co)cycles}

Suppose $k \geq 1$ is an integer and suppose $p, q, r$ are integers which satisfy $p+q+r+1 = k$ and $p,r \geq 1$, $q \geq 0$. Let $H$ be a $2$-loop graph cocycle of order $\leq k$ and of top degree, expressed as
\[
H = \sum_i \frac{w(\Gamma_i)}{|\text{Aut}(\Gamma_i)|} \Gamma_i. 
\]
where $\Gamma_i$ are admissible plain graphs without internal black vertices. $w(\Gamma_i)$ is the coefficient of a graph $\Gamma_i$, divided by the number of automorphisms $|\text{Aut}(\Gamma_i)|$.
We always assume each $\Gamma_i$ has no orientation-reversing automorphisms. % and no double edges.

For our $2$-loop cycles $d^{\prime}(\Theta(p,q,r))$, the counting formula \ref{original counting formula} becomes more simple:

\begin{theorem}[Counting formula for $2$-loop (co)cycles]
\label{keyprop}
\[
<I(H), d^{\prime}(\Theta(p,q,r))> = <I(H), d(\Theta(p,q,r))> = \pm w(\Theta(p,q,r)).
\]
where $\pm$ depends only on the orientation of  $\Theta(p,q,r)$ in the graph cocycle $H$. \footnote{We can show that a similar result holds even when we allow a plain graph with a internal black vertex. Refer to poof of Theorem \ref{keyprop2}}
\end{theorem}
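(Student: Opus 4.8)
The plan is to route both pairings through the counting formula (Theorem~\ref{original counting formula}) applied to the single chord diagram $C:=D(\Theta(p,q,r))$, and then to read off the signed count as $\pm w(\Theta(p,q,r))$ by a combinatorial analysis of the graphs mapping onto $C$. First I would dispose of the equality $\langle I(H), d'(\Theta(p,q,r))\rangle=\langle I(H), d(\Theta(p,q,r))\rangle$: the ribbon presentation $P'(\Theta(p,q,r))$ is obtained from $P(\Theta(p,q,r))$ by cross-change moves, which only insert crossings carrying no $\star$-label and hence no perturbation parameter, so $d(\Theta(p,q,r))$ and $d'(\Theta(p,q,r))$ are cycles built from one and the same chord diagram $C$. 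Since the right-hand side of the counting formula depends only on $C$ (and the signs on its chords), the two values agree — this is the precise content of the remark in Subsection~\ref{Properties of ribbon presentation} that cross-change moves do not affect the pairing. It therefore suffices to compute $\langle I(H), d(\Theta(p,q,r))\rangle$, and since $H$ is a top-degree $2$-loop cocycle of order $\le k$ built from admissible plain graphs with no internal black vertex while $C$ has exactly $2k$ vertices, Theorem~\ref{original counting formula} yields
\[
\langle I(H), d(\Theta(p,q,r))\rangle \;=\; (-1)^{r(C)}\!\!\sum_{\overline{\Gamma}\in G(C)} w(\Gamma)\, s(\Gamma,\overline{\Gamma}).
\]

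All the work is now in this signed sum. I would begin by enumerating $G(C)$: the diagram $C$ has $p+q+r$ oriented lines, of which exactly two (from the leftmost hair of the $p$-edge and the rightmost hair of the $r$-edge) carry three vertices and the remaining $p+q+r-2$ carry two, and by condition~(IV) of Definition~\ref{Graph-chord pairing} the broken-line solid structure is rigid on every two-vertex line and admits precisely the two choices $0\to1\to2$ and $0\to1,\,0\to2$ on each three-vertex line, so $G(C)$ is finite. One distinguished element is the plain graph $\Gamma(\Theta(p,q,r))$ from which $C$ was constructed; since $C$ was built exactly so as to pair with it, its class under the quasi-isomorphism $PGC_{n,j}\simeq HGC_{n,j}$ of \cite{Yos 3} is the hairy graph $\Theta(p,q,r)$, and its contribution is $w(\Gamma(\Theta(p,q,r)))\,s(\Gamma(\Theta(p,q,r)),\overline{\Gamma(\Theta(p,q,r))})=\pm w(\Theta(p,q,r))$, the sign being the comparison in Definition~\ref{Induced color on BCR diagrams} of the orientation of $\Theta(p,q,r)$ in $H$ with the induced one. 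For the remaining elements of $G(C)$ — obtained by switching one or both three-vertex lines to the broken line $0\to1,\,0\to2$ — I would show the net contribution is zero: such a graph either carries an orientation-reversing automorphism (hence cannot occur in $H$, so $w(\Gamma)=0$), or is identified under $PGC_{n,j}\to HGC_{n,j}$ with a graph that is degenerate in the top-degree part, or cancels against another element of $G(C)$ with opposite induced sign. This is the step that generalizes the computation for $\Theta(1,0,1)$ already carried out in \cite{Yos 1}. Combining, $\langle I(H), d(\Theta(p,q,r))\rangle=\pm w(\Theta(p,q,r))$ with a sign depending only on the orientation of $\Theta(p,q,r)$ in $H$.

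The main obstacle I anticipate is exactly this last step: for general $(p,q,r)$ one must describe $G(C)$ explicitly, certify that every non-principal element contributes zero (rather than, say, a further copy of $w$ of some other hairy graph), and keep the global sign under control through the bookkeeping of Definition~\ref{Induced color on BCR diagrams} — in particular verifying independence from $(p,q,r)$ and from the labelling of $\Theta$ chosen in the construction, as flagged in the remark following the construction of $D(\Theta(p,q,r))$. The extension announced in the footnote would be handled by the same template: enlarge $G(C)$ by the graphs in which a trivalent solid vertex is taken to be internal black rather than external, and check these again contribute either $0$ or a further copy of $\pm w(\Theta(p,q,r))$, which is the content of Theorem~\ref{keyprop2}.
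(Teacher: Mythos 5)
Your reduction to the counting formula is the right frame, and your treatment of the first equality (the cross-changed crossings carry no $\star$, hence no perturbation, so they are invisible to the localized integrals) matches the paper's lemma, which justifies this via the Localizing and Pairing Lemmas of \cite{Yos 1}. Your enumeration of $G(C)$ is also essentially correct: the two three-vertex lines each admit exactly two solid configurations, so four plain graphs are counted. But the evaluation of the signed sum is where your argument breaks down, and it breaks down in an essential way. You posit a single ``distinguished'' element of $G(C)$ whose coefficient equals $w(\Theta(p,q,r))$ and claim the remaining elements contribute zero (via orientation-reversing automorphisms, degeneracy, or cancellation). Neither half of this is true. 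First, the hairy graph $\Theta(p,q,r)$ itself is not an element of $G(C)$ at all: its two trivalent vertices would be white vertices of a plain graph, whereas every counted graph has all $2k$ vertices black and lying on the oriented lines (this is exactly what Proposition~\ref{keypropforpairing} and the degree count enforce). So there is no element of $G(C)$ whose cochain-level coefficient is $w(\Theta(p,q,r))$, and identifying a coefficient ``under the quasi-isomorphism $PGC_{n,j}\simeq HGC_{n,j}$'' conflates the coefficient in the cochain $H$ (which is what enters Theorem~\ref{original counting formula}) with data of the cohomology class. Second, the other elements of $G(C)$ do not vanish and do not cancel: in general all four counted graphs have nonzero coefficients in $H$.

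The paper's actual mechanism is different: the four counted graphs are precisely the graphs $D_1,D_2,D_3,D_4$ of Figure~\ref{D1D2D3D4}, obtained by performing STU resolutions at the two trivalent vertices of $\Theta(p,q,r)$, and with compatible labelings they all carry the same sign $s(D_i,\overline{D}_i)$ (this is Lemma~\ref{keylemmaforpairing}). The conclusion then follows not from vanishing of three terms but from the relation
\begin{equation*}
w(\Theta(p,q,r)) \;=\; w(D_1)+w(D_2)+w(D_3)+w(D_4),
\end{equation*}
which holds in the cocycle $H$ by the STU relation of \cite{Yos 2}, so that $<I(H), d(\Theta(p,q,r))> = \pm\bigl(w(D_1)+\cdots+w(D_4)\bigr) = \pm w(\Theta(p,q,r))$. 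To repair your proof you would need to replace the ``one principal term, rest zero'' step by this STU bookkeeping, including the verification that the induced labels give all four resolutions the same sign; as written, the proposed vanishing claims would fail and the asserted value of the ``principal'' term is unjustified.
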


We proceed to show Theorem \ref{keyprop}. First, the following lemma is a consequence of Localizing lemma and Pairing Lemma in \cite{Yos 1}, which say a crossing is detected when it is connected by a dashed edge. %This lemma is analogous to the fact that Vassiliev invariants of order $\leq k$ vanish for singular knots with $(k+1)$ singular points.

\begin{lemma}
\[
<I(H), d^{\prime}(\Theta(p,q,r)) - d(\Theta(p,q,r))> = 0. 
\]
\end{lemma}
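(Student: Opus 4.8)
The plan is as follows. When $n-j\geq 3$ no cross-change moves are needed (the cycles $d(\Theta(p,q,r))$ already lie in the unknot component), so one sets $d'(\Theta(p,q,r))=d(\Theta(p,q,r))$ and there is nothing to prove; assume therefore $n-j=2$. By construction $P'(\Theta(p,q,r))$ is obtained from $P(\Theta(p,q,r))$ by finitely many cross-change moves, each of which creates a copy of the node disk together with a single new crossing of that copy with a band, and — crucially — these new crossings do \emph{not} carry the label $\star$: no perturbation is performed there. In the dictionary of Section~\ref{Preliminaries for construction of cycles} a chord corresponds precisely to a $\star$-crossing, so $P(\Theta(p,q,r))$ and $P'(\Theta(p,q,r))$ determine the \emph{same} chord diagram $C=D(\Theta(p,q,r))$. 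It therefore suffices to show $\langle I(\Gamma),d(\Theta(p,q,r))\rangle=\langle I(\Gamma),d'(\Theta(p,q,r))\rangle$ for each admissible plain graph $\Gamma$ occurring in $H$, and then to sum over the terms of $H$.

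First I would recall from the proof of the counting formula (Theorem~\ref{original counting formula}) in \cite{Yos 1} that $\langle I(\Gamma),\psi\rangle$ for a ribbon-presentation cycle $\psi$ is computed by a fibre integral over a bundle with compactified configuration space as fibre, the integrand being a product of propagators pulled back along the edges of $\Gamma$; the Localizing Lemma forces every external black vertex to localize at (the thickening of) a $\star$-crossing or at the node, with the solid subgraph carried by the oriented lines of $C$, and the Pairing Lemma records that such a site contributes only when it receives an endpoint of a dashed edge (supplying a factor $S^{n-j-2}$, the node supplying $S^{j-1}$). Running the same analysis for $d'$, the only new localization sites are the non-$\star$ crossings and the copy node disks; but over such a region the family $d'$ depends on no coordinate of $(S^{n-j-2})^k\times S^{j-1}$, so the propagators pulled back there are pulled back from a point of the parameter space, and any term of the combinatorial sum placing a black vertex at such a site has integrand of degree strictly below $k(n-j-2)+(j-1)$ and contributes $0$. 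Hence the non-vanishing contributions to $\langle I(\Gamma),d'\rangle$ coincide, with the same values and signs, with those to $\langle I(\Gamma),d\rangle$ — both reduce to the sum over $G(C)$ in the counting formula. Summing over the terms of $H$ yields $\langle I(H),d'(\Theta(p,q,r))-d(\Theta(p,q,r))\rangle=0$.

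The main obstacle is this degree-counting (``invisibility'') step: the Localizing Lemma of \cite{Yos 1} is phrased for cycles built directly from chord diagrams, whereas $d'$ comes from a ribbon presentation carrying extra non-$\star$ crossings and extra copy node disks, so one must re-examine that lemma to confirm that configurations localizing at these new features are either forbidden or vanish for dimensional reasons. Concretely, I would exhibit a local model near each non-$\star$ crossing and each copy node disk in which $d'$ is a parameter-\emph{independent} perturbation of the trivial embedding, so that the corresponding local configuration integral cannot reach the top degree; once this local model is in place, the remaining orientation and sign bookkeeping is exactly as in \cite{Yos 1}.
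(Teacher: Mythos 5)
Your argument is essentially the paper's own: the paper disposes of this lemma in one line by citing the Localizing and Pairing Lemmas of \cite{Yos 1}, which say that only crossings joined by dashed edges (i.e.\ the $\star$-crossings recorded in the chord diagram) are detected, and since the cross-change moves add only unstarred crossings and copy node disks, the two pairings agree. Your proposal spells out the same localization/degree-counting reasoning in more detail, so it is correct and follows the same route.
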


Then, we show the second equality of Theorem \ref{keyprop}. 
Consider the four graphs $D_1$, $D_2$, $D_3$, $D_4$ in Figure \ref{D1D2D3D4}, which are obtained by performing $STU$ relations \cite{Yos 2} to the oriented hairy graph $\Theta(p,q,r)$. Note that some of $D_1$, $D_2$, $D_3$, $D_4$ might be the same graph.   We assume that the labels of the four graphs are the same if we contract the two labeled edges of each graph in Figure \ref{D1D2D3D4}. 
We can take such labels of these graphs so that the relation 
\[
w(\Theta(p,q,r)) =w(D_1) + w(D_2) +  w(D_3) + w(D_4).
\]
is satisfied in the graph cocycle $H = \sum_i \frac{w(\Gamma_i)}{|\text{Aut}(\Gamma_i)|} \Gamma_i $. 
We show the integrals $I(\Gamma_i)$ which do not vanish on the cycle $d(\Theta(p,q,r))$, are only the integrals when $\Gamma_i$  are either of the graphs $D_1$, $D_2$, $D_3$, $D_4$. 

\begin{figure}[htpb]
\centering
     \includegraphics [width =10cm] {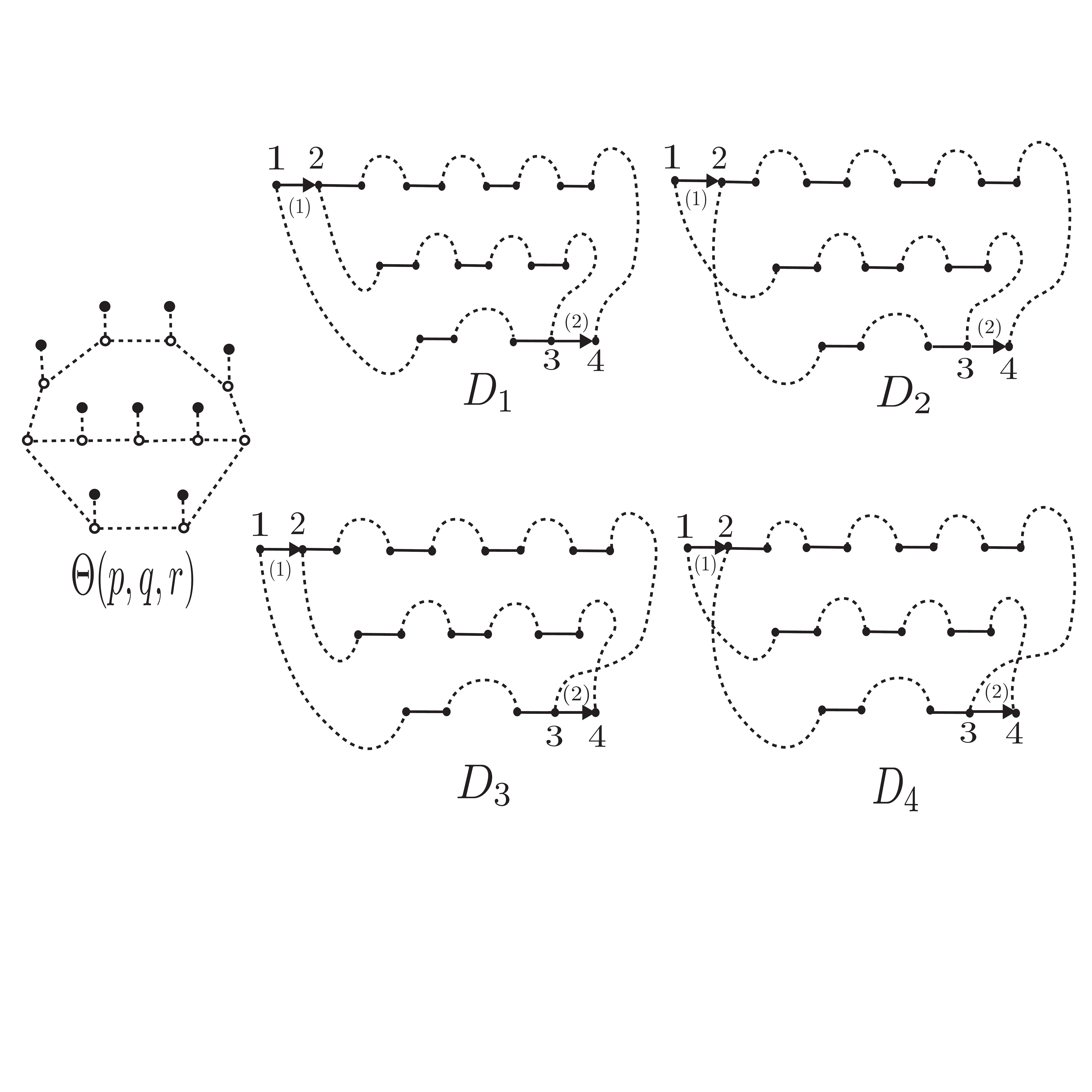}
    \caption{Graphs $D_1$, $D_2$, $D_3$ and $D_4$}
    \label{D1D2D3D4}
\end{figure}

\begin{notation}
Suppose $\Gamma_i$ has no orientation reversing automorphism. Define the paring of $\Gamma_i$ and $D_j$ by
\begin{equation*}
<\Gamma_i, D_j> =
\begin{cases}
0 & (\text{if $\Gamma_i$ is not isomorphic to $ D_j$}) \\
\pm1 & (\text{if $\Gamma_i$ is isomorphic to $D_j$}) \\
\end{cases} 
\end{equation*}
The sign is positive (resp. negative) if the isomorphism preserves (resp. reserves) the orientation.
\end{notation}

The counting formula Theorem \ref{keyprop} for the $2$-loop cycles $d(\Theta(p,q,r))$ is a consequence of the following. 
\begin{lemma}
\label{keylemmaforpairing}
If the order of a graph $\Gamma_i$ is less than or equal to $k$, we have
\[
 <I(\Gamma_i), d(\Theta(p,q,r))> = \pm \sum_{j = 1,2,3,4} |\text{Aut}(D_j)|<\Gamma_i, D_j>,
\]
where $\pm$ depends only on the orientation of  $\Theta(p,q,r)$ in the graph cocycle $H$.
\end{lemma}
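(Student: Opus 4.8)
The plan is to feed the single graph $\Gamma_i$ into the counting formula (Theorem~\ref{original counting formula}) as the one–term combination $H=\Gamma_i$, with chord diagram $C=D(\Theta(p,q,r))$ and cycle $\psi=d(\Theta(p,q,r))$, and then to identify the index set $G(C)$ appearing there with labelled representatives of the four graphs $D_1,D_2,D_3,D_4$ of Figure~\ref{D1D2D3D4}.

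First I would discard the graphs that contribute nothing. Suppose $\Gamma_i$ pairs nontrivially with $d(\Theta(p,q,r))$; then $\Gamma_i$ is mapped onto $C$ in the sense of Definition~\ref{Graph-chord pairing}, so condition (I) forces $|B(\Gamma_i)|=2k=|V(C)|$, and since every vertex of $C$ is an endpoint of a chord, every black vertex of $\Gamma_i$ then carries a dashed edge; in particular $\Gamma_i$ has no internal black vertex, and Proposition~\ref{keypropforpairing} shows in addition that $\Gamma_i$ has no white vertex, has exactly $k$ dashed edges and exactly $2k$ external black vertices. Thus I may assume $\Gamma_i$ has this shape, which is exactly the shape of each $D_j$; if $\Gamma_i$ does not have it, both sides of the claimed identity vanish (the right side because then $\Gamma_i\not\cong D_j$ for all $j$).

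Next I would compute $G(C)$. The diagram $D(\Theta(p,q,r))$ has $s=p+q+r$ oriented lines, of which exactly two — the ones produced by the exceptional ``three--dot'' replacement of the tail hairs, which encode the two trivalent vertices of $\Theta$ — have $t_i=2$, while the remaining $p+q+r-2$ lines have $t_i=1$. On a line with $t_i=1$ the solid structure compatible with conditions (III) and (IV) is uniquely determined, whereas on a line with $t_i=2$ there are precisely $2^{2-1}=2$ admissible solid structures (the ``path along the line'' and the ``two edges out of the leaf''), matching the count $|G(C)|=\sum_i 2^{t_i-1}=4$. Running these four combinations of choices through the recipe that produced $D(\Theta(p,q,r))$ from $\Theta(p,q,r)$ — with the dashed edges of the output pinned down by the chords of $C$ — one checks directly that the four admissible plain graphs so obtained are exactly the underlying graphs, carrying their induced orientations, of $D_1,D_2,D_3,D_4$ (some possibly equal), i.e.\ the graphs obtained by resolving the two trivalent vertices of $\Theta(p,q,r)$ by the $STU$ relation. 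Hence the counting formula collapses to $< I(\Gamma_i),d(\Theta(p,q,r))> = (-1)^{r(C)}\sum_{j=1}^{4} w(\Gamma_i)\,s(\Gamma_i,\overline{D_j})$, where the $j$-th term is nonzero exactly when $\Gamma_i\cong D_j$, in which case $w(\Gamma_i)=|\mathrm{Aut}(\Gamma_i)|=|\mathrm{Aut}(D_j)|$.

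The remaining, and genuinely technical, step is the sign bookkeeping. I would carry the ordering–of–vertices and orientation–of–edges rules of the induced label (Definition~\ref{Induced color on BCR diagrams}) through the $STU$ resolution of the oriented hairy graph $\Theta(p,q,r)$, compare them with the labels chosen on $D_1,\dots,D_4$ so that $w(\Theta(p,q,r))=w(D_1)+w(D_2)+w(D_3)+w(D_4)$ holds, and show that the product $(-1)^{r(C)}s(\Gamma_i,\overline{D_j})$, once the orientation–comparison sign implicit in $<\Gamma_i,D_j>$ is factored out, reduces to a single constant depending only on $r(C)$ and on the chosen label of $\Theta(p,q,r)$ in $H$, and not on $\Gamma_i$ or on $j$. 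This yields $< I(\Gamma_i),d(\Theta(p,q,r))> = \pm\sum_{j}|\mathrm{Aut}(D_j)|<\Gamma_i,D_j>$ with the asserted dependence of the sign. The main obstacle is precisely this compatibility of orientations; by contrast, the identification of $G(C)$ with $\{D_1,\dots,D_4\}$ and the elimination of the degenerate graphs are routine once Proposition~\ref{keypropforpairing} and the counting formula are in hand.
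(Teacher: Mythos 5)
Your proposal is correct and follows essentially the same route as the paper: apply the counting formula to the diagram $D(\Theta(p,q,r))$, observe that conditions (III)--(IV) leave a unique solid structure on each two-vertex line and exactly two on each of the two three-vertex lines, so that $G(C)$ consists precisely of the four $STU$-resolutions $D_1,\dots,D_4$, and conclude with the sign consistency coming from the compatible labels (the paper performs your preliminary elimination via Proposition \ref{keypropforpairing} only later, in the proof of Proposition \ref{keyprop2}, but this is the same idea). The only quibble is notational: the count of $G(C)$ is really the product $\prod_i 2^{t_i-1}=4$ of the per-line choices (as your own line-by-line argument shows), not the sum as written.
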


\begin{proof}[Proof of Theorem \ref{keyprop}]
Assuming Lemma \ref{keylemmaforpairing}, we have 
\begin{align*}
<I(H), d(\Theta(p,q,r))> &= \pm \sum_i \frac{w(\Gamma_i)}{|\text{Aut}(\Gamma_i)|}  \sum_{j = 1,2,3,4} |\text{Aut}(D_j)|<\Gamma_i, D_j> \\
& = \pm (w(D_1) + w(D_2) +w(D_3) + w(D_4)) \\
& = \pm w(\Theta(p,q,r)).
\end{align*}
\end{proof}

\begin{proof}[Proof of Lemma \ref{keylemmaforpairing}]
By the counting formula \ref{original counting formula}, the pairing $ <I(\Gamma_i), d(\Theta(p,q,r))>$ is equal to counting graphs on the diagram $D = D(\Theta(p,q,r))$. On the segment \segmenta, only
\begin{tikzpicture}[x=1pt,y=1pt,yscale=0.15,xscale=0.15, baseline=-3pt] 

%solid edges
\draw  [line width =1pt] (100,0)--(300,0);

%dashed edges
\draw  [dash pattern = on 2pt off 3 pt, line width =1pt] (100,0)--(100,150);
\draw  [dash pattern = on 2pt off 3 pt, line width =1pt] (300,0)--(300,150);

%black vertices
\draw  [fill={rgb, 255:red, 0; green, 0; blue, 0 }  ,fill opacity=1 ]  (100,0) circle (20);
\draw  [fill={rgb, 255:red, 0; green, 0; blue, 0 }  ,fill opacity=1 ] (300,0) circle (20);
\end{tikzpicture}
is counted.
On the segment \segmentb, only
\begin{tikzpicture}[x=1pt,y=1pt,yscale=0.15,xscale=0.15, baseline=-3pt] 

%solid edges
\draw  [line width =1pt] (100,0)..controls (200, 50)..(300,0);
\draw  [line width =1pt] (300,0)..controls (400, 50)..(500,0);

%dashed edges
\draw  [dash pattern = on 2pt off 3 pt, line width =1pt] (100,0)--(100,150);
\draw  [dash pattern = on 2pt off 3 pt, line width =1pt] (300,0)--(300,150);
\draw  [dash pattern = on 2pt off 3 pt, line width =1pt] (500,0)--(500,150);

%black vertices
\draw  [fill={rgb, 255:red, 0; green, 0; blue, 0 }  ,fill opacity=1 ]  (100,0) circle (20);
\draw  [fill={rgb, 255:red, 0; green, 0; blue, 0 }  ,fill opacity=1 ] (300,0) circle (20);
\draw  [fill={rgb, 255:red, 0; green, 0; blue, 0 }  ,fill opacity=1 ] (500,0) circle (20);

\end{tikzpicture}
and
\begin{tikzpicture}[x=1pt,y=1pt,yscale=0.15,xscale=0.15, baseline=-3pt] 

%solid edges
\draw  [line width =1pt] (100,0)..controls (200, 50)..(300,0);
\draw  [line width =1pt] (100,0)..controls (250, 100)..(500,0);

%dashed edges
\draw  [dash pattern = on 2pt off 3 pt, line width =1pt] (100,0)--(100,150);
\draw  [dash pattern = on 2pt off 3 pt, line width =1pt] (300,0)--(300,150);
\draw  [dash pattern = on 2pt off 3 pt, line width =1pt] (500,0)--(500,150);

%black vertices
\draw  [fill={rgb, 255:red, 0; green, 0; blue, 0 }  ,fill opacity=1 ]  (100,0) circle (20);
\draw  [fill={rgb, 255:red, 0; green, 0; blue, 0 }  ,fill opacity=1 ] (300,0) circle (20);
\draw  [fill={rgb, 255:red, 0; green, 0; blue, 0 }  ,fill opacity=1 ] (500,0) circle (20);

\end{tikzpicture}
are allowed, and 
\begin{tikzpicture}[x=1pt,y=1pt,yscale=0.15,xscale=0.15, baseline=-3pt] 

%solid edges
\draw  [line width =1pt] (300,0)..controls (400, 50)..(500,0);
\draw  [line width =1pt] (100,0)..controls (350, 100)..(500,0);

%dashed edges
\draw  [dash pattern = on 2pt off 3 pt, line width =1pt] (100,0)--(100,150);
\draw  [dash pattern = on 2pt off 3 pt, line width =1pt] (300,0)--(300,150);
\draw  [dash pattern = on 2pt off 3 pt, line width =1pt] (500,0)--(500,150);

%black vertices
\draw  [fill={rgb, 255:red, 0; green, 0; blue, 0 }  ,fill opacity=1 ]  (100,0) circle (20);
\draw  [fill={rgb, 255:red, 0; green, 0; blue, 0 }  ,fill opacity=1 ] (300,0) circle (20);
\draw  [fill={rgb, 255:red, 0; green, 0; blue, 0 }  ,fill opacity=1 ] (500,0) circle (20);

\end{tikzpicture}
is not counted.  Then, there are four plain graphs counted, which are $D_1, \dots, D_4$. Figure \ref{D2iscounted} shows how the graph $D_2$ (drawn in red) is counted on the diagram $D(\Theta(p,q,r))$. 
So we have
\begin{align*}
I(H)(d(\Theta(p,q,r))) = (-1)^{r(D(\Theta(p.q.r))} \sum_{\overline{\Gamma_i} \in G(D(\Theta(p,q,r))}  w(\Gamma_i) s(\Gamma_i, \overline{\Gamma})
=  \sum_{i=1}^4 w(D_i) s(D_i, \overline{D}_i)
\end{align*}
On the other hand, the sign $s(D_i, \overline{D}_i)$ is the same for all $i = 1, 2, 3, 4$ because of the way of labeling in Figure \ref{D1D2D3D4}. 

\begin{figure}[htpb]
\begin{center}
     \includegraphics [width =7.5cm] {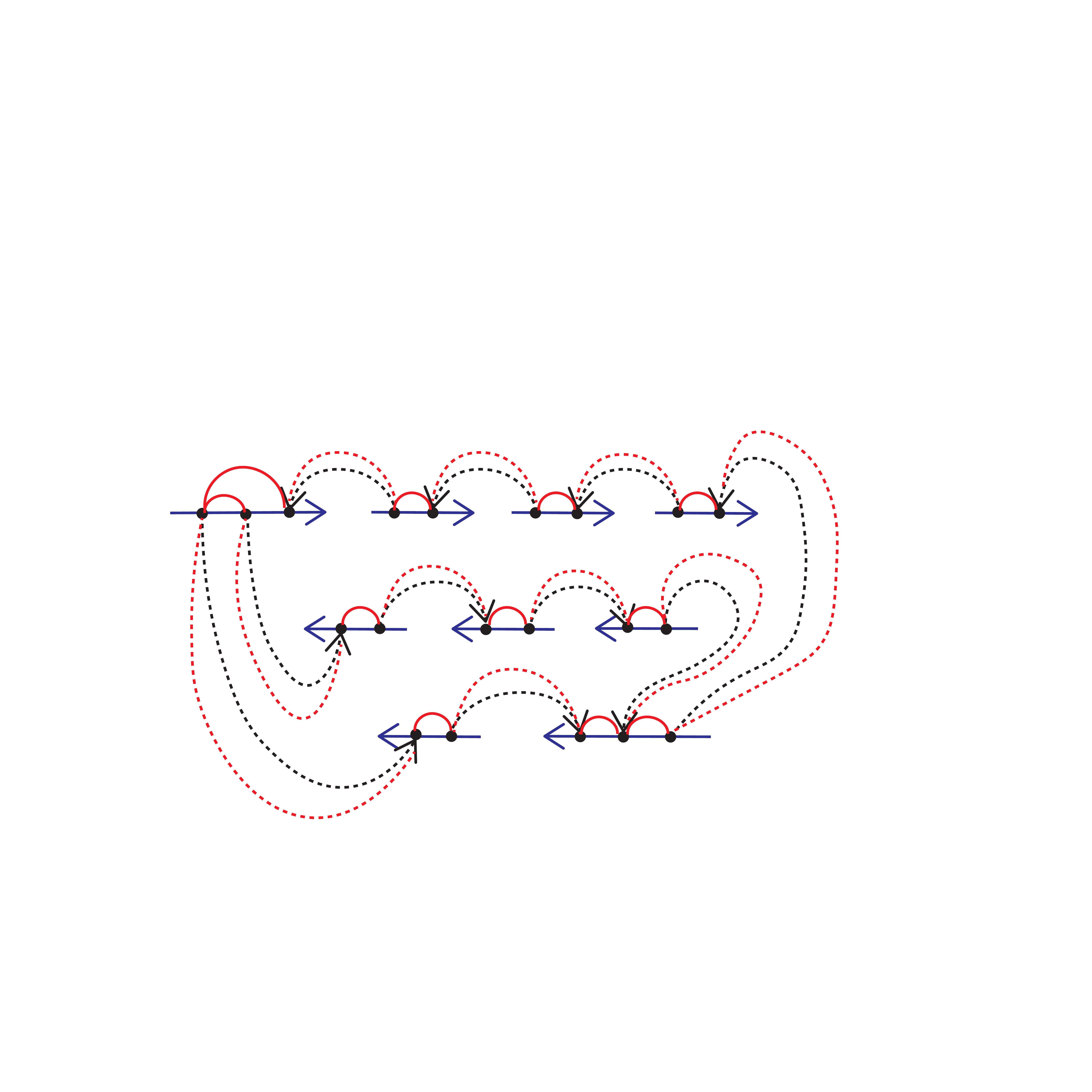}
    \caption{Graph $D_2$ (drawn in red) is counted on the diagram $D(\Theta(p,q,r))$}
    \label{D2iscounted}
\end{center}
\end{figure}

\end{proof}

\subsection{Non-triviality of $2$-loop cycles}
Configuration integrals of graph cocycles of plain graphs do not necessarily give cocycles of $\overline{\mathcal{K}}_{n,j}$. 
So, for the remainder of this section, we use the decorated graph complex and the modified configuration space integrals. 
Fortunately, we can show added graphs do not affect pairing argument.

\begin{theorem}[Theorem \ref{main theorem on general cases}]
\label{nontrivialityof2loopcycles}
Let $\mathcal{H}_{n,j}(g=2)$ be the class of cycles defined in Section \ref{the class of 2loop cycles}. 
Then, the pairing map 
\[
<, >: H^{\ast}(DGC_{n,j}(g=2))\otimes H_{\ast}(\overline{\mathcal{K}}_{n,j}, \mathbb{R}) \rightarrow \mathbb{R}, \quad (H, c) \longmapsto <\overline{I}(H), c)>%which is well defined by our theorem 
\]
induces the map
\[
Pair: H^{top}(HGC_{n,j}(g=2)) \otimes \mathcal{H}_{n,j}(g=2) \rightarrow \mathbb{R}
\]
which is non-degenerate with respect to the first term. That is, we can give an injective map from the top hairy graph cohomology to the dual of this class of cycles.
\end{theorem}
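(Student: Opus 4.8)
The plan is to reduce the non-degeneracy of $Pair$ in its first variable to the counting formula for $2$-loop cocycles (Theorem~\ref{keyprop}), after transporting a given top hairy graph cohomology class into the decorated graph complex, where the modified integral lives. First I would make the pairing well defined at the level of (co)homology: since $\overline{I}\colon DGC_{n,j}\to\Omega_{dR}(\overline{\mathcal{K}}_{n,j})$ is a cochain map in the relevant range (Theorem~\ref{main theorem 1 on hidden faces}; the case $n-j=2$, $j=2$ is covered because $g=2\le 3$), the assignment $(H,c)\mapsto\langle\overline{I}(H),c\rangle$ descends to a bilinear map $H^{\ast}(DGC_{n,j}(g=2))\otimes H_{\ast}(\overline{\mathcal{K}}_{n,j},\mathbb{R})\to\mathbb{R}$. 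Using the zigzag $HGC_{n,j}\xleftarrow{\simeq}PGC'_{n,j}\xrightarrow{\simeq}PGC_{n,j}\xleftarrow{\simeq}DGC_{n,j}$ of \cite{Yos 3} recalled at the start of Section~\ref{Pairing for general cases}, every class $[H]\in H^{top}(HGC_{n,j}(g=2))$ corresponds to a class $[\widetilde H]\in H^{top}(DGC_{n,j}(g=2))$, and $Pair([H],[c])$ is by construction $\langle\overline{I}(\widetilde H),c\rangle$, independent of the chosen representatives.

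Next I would pick a convenient representative of $[H]$. Decomposing by order (which the differential preserves), I may assume $[H]$ is homogeneous, say of order $k$. By the description of the top $2$-loop hairy cohomology in \cite{Nak, CCTW}, $H^{top}(HGC_{n,j}(g=2))$ is generated by cocycles that are linear combinations of uni-trivalent hairy graphs, and more precisely it is spanned by the classes $[\Theta(p,q,r)]$ with $p,r\ge 1$, $q\ge 0$ and $p+q+r+1=k$. Hence there is a representative $H_0=\sum_{p,q,r}c_{p,q,r}\,\Theta(p,q,r)$ cohomologous to $H$, and, since $[H]\ne 0$, with $c_{p_0,q_0,r_0}\ne 0$ for at least one admissible triple $(p_0,q_0,r_0)$. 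I would then push $H_0$ through the zigzag to a plain, hence decorated, graph cocycle $\widetilde H_0=\sum_i\frac{w(\Gamma_i)}{|\mathrm{Aut}(\Gamma_i)|}\Gamma_i$ cohomologous to $\widetilde H$, in which the plain graph representing $\Theta(p_0,q_0,r_0)$ still carries coefficient $c_{p_0,q_0,r_0}\ne 0$, the quasi-isomorphisms modifying $H_0$ only by terms with internal black vertices, white vertices, or nontrivial bar decorations.

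Then I would evaluate $\widetilde H_0$ against the cycle $d'(\Theta(p_0,q_0,r_0))\in\mathcal{H}_{n,j}(g=2)$, whose dimension $k(n-j-2)+(j-1)$ (with $k=p_0+q_0+r_0+1$) is exactly the degree of the order-$k$ part of $H^{top}$. By Proposition~\ref{keypropforpairing}, any decorated summand of $\widetilde H_0$ of order $\le k$ that can contribute to the pairing with the chord diagram $D(\Theta(p_0,q_0,r_0))$, which has $2k$ vertices, is forced to be a plain graph with no white vertex, exactly $k$ dashed edges and $2k$ external black vertices; therefore the extra terms introduced in the previous step contribute nothing, $\overline{I}$ agrees with the plain configuration space integral $I$ on the remaining summands, and Theorems~\ref{original counting formula} and~\ref{keyprop} together with Lemma~\ref{keylemmaforpairing} apply and give $\langle\overline{I}(\widetilde H_0),d'(\Theta(p_0,q_0,r_0))\rangle=\pm\, c_{p_0,q_0,r_0}\ne 0$, the sign depending only on the fixed orientation of $\Theta(p_0,q_0,r_0)$. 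Thus for every $0\ne[H]$ the functional $c\mapsto Pair([H],c)$ is nonzero on $\mathcal{H}_{n,j}(g=2)$, i.e.\ $[H]\mapsto Pair([H],-)$ is an injection $H^{top}(HGC_{n,j}(g=2))\hookrightarrow\mathrm{Hom}(\mathcal{H}_{n,j}(g=2),\mathbb{R})$, which is precisely the asserted non-degeneracy. The $3$-loop statement (Theorem~\ref{nontrivialityof3loopcycles}) would follow by the same scheme, using the diagrams $D(\ctext{Y}((p_i)_i))$ and the $3$-loop analogue of the counting formula.

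The hard part will be controlling the non-plain contributions produced at the chain level by the quasi-isomorphisms of \cite{Yos 3}: one must verify that pushing a uni-trivalent hairy cocycle through them changes it only by graphs that are annihilated by the pairing with $d'(\Theta(p,q,r))$ — this is exactly where Proposition~\ref{keypropforpairing} and the principle that ``added graphs do not affect the pairing'' are doing the real work — and one must invoke \cite{Nak, CCTW} to know that $H^{top}(HGC_{n,j}(g=2))$ is spanned by the classes $[\Theta(p,q,r)]$ with $p,r\ge1$, so that a nonvanishing coefficient $c_{p_0,q_0,r_0}$ is guaranteed to exist. Granting these two inputs, the remainder is a direct application of the counting formula already established in this section.
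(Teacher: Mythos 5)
Your proposal follows essentially the same route as the paper: lift the top hairy graph cocycle to the decorated graph complex, use Proposition \ref{keypropforpairing} together with the counting formula (Proposition \ref{keyprop2}, resting on Theorem \ref{original counting formula} and Lemma \ref{keylemmaforpairing}) to see that only the coefficient of $\Theta(p,q,r)$ survives the pairing with $d^{\prime}(\Theta(p,q,r))$, and conclude injectivity from the known structure of $H^{top}(HGC_{n,j}(g=2))$. The paper merely phrases this contrapositively --- if $\overline{I}(H)$ vanishes on all cycles $d^{\prime}(\Theta(p,q,r))$ then every coefficient $w(\Theta(p,q,r))$ vanishes, hence $h=0$ --- which is the same argument as your direct evaluation at a representative with a nonzero coefficient $c_{p_0,q_0,r_0}$.
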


In Cororally \ref{from homotopy group}, we showed that $\mathcal{H}_{n,j}(g=2)$ lies in the homotopy group  $\pi_{j-1} (\overline{\mathcal{K}}_{j+2, j})_{\iota}$. 
Since $H^{top}(HGC_{n,j}(g=2))$ is known to be infinite-dimensional \cite{CCTW}, we have the following. This result was first given by Budney--Gabai \cite {BG} and Watanabe \cite{Wat 5} by studying the diffeomorphism group
$\text{Diff}_{\partial}(D^{j+1} \times S^1)$. 
\footnote{$\text{Diff}_{\partial}(D^{j+1} \times S^1)$ and $ \text{Emb}_{\partial}(D^j, D^{j+2})$ are related by the map $\text{Diff}_{\partial} (D^{j+1} \times S^1) \xrightarrow{ev} \text{Emb}_{\partial}(D^{j+1}, D^{j+1} \times S^1) \xrightarrow[\simeq]{scan}\ \Omega \text{Emb}_{\partial}(D^j, D^{j+2})$, where the first map is the evaluation and the second map is scanning. }

\begin{coro}
\label{non-finite generation}
$\pi_{j-1} (\overline{\mathcal{K}}_{j+2, j})_{\iota} \otimes \mathbb{Q}$ is infinite-dimensional. Hence, $\pi_{j-1} (\mathcal{K}_{j+2, j})_{\iota} \otimes \mathbb{Q}$ is infinte-dimensional. 
\end{coro}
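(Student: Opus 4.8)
The plan is to deduce the corollary from Theorem~\ref{nontrivialityof2loopcycles}, the known infinite-dimensionality of $H^{top}(HGC_{j+2,j}(g=2))$, and the homotopy refinement recorded in Corollary~\ref{from homotopy group}. First I would specialize to codimension two, $n=j+2$. Theorem~\ref{nontrivialityof2loopcycles} supplies a pairing
\[
Pair : H^{top}(HGC_{j+2,j}(g=2)) \otimes \mathcal{H}_{j+2,j}(g=2) \longrightarrow \mathbb{R}
\]
that is non-degenerate in the first variable, hence an injection $H^{top}(HGC_{j+2,j}(g=2)) \hookrightarrow \mathcal{H}_{j+2,j}(g=2)^{\vee}$. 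A vector space with finite-dimensional dual is finite-dimensional, so $\dim_{\mathbb{R}} \mathcal{H}_{j+2,j}(g=2) \ge \dim_{\mathbb{R}} H^{top}(HGC_{j+2,j}(g=2))$. In codimension two the space $H^{top}(HGC_{j+2,j}(g=2))$ is concentrated in cohomological degree $j-1$ (the footnote to Theorem~\ref{main theorem on general cases}) and is infinite-dimensional by \cite{CCTW, Nak}. Thus $\mathcal{H}_{j+2,j}(g=2)$ is an infinite-dimensional subspace of $H_{j-1}(\overline{\mathcal{K}}_{j+2,j};\mathbb{R})$.

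Next I would pass from homology to homotopy. The space $\mathcal{H}_{j+2,j}(g=2)$ is spanned by the classes of the cycles $d^{\prime}(\Theta(p,q,r))$, and by Corollary~\ref{from homotopy group} each of these is, for $n-j=2$, homologous to $d^{\prime}(\Theta(p,q,r))((\varepsilon_i=1)_i)$, the image under the rational Hurewicz homomorphism
\[
h : \pi_{j-1}\big((\overline{\mathcal{K}}_{j+2,j})_{\iota}\big)\otimes\mathbb{Q} \longrightarrow H_{j-1}(\overline{\mathcal{K}}_{j+2,j};\mathbb{Q})
\]
of an honest element of $\pi_{j-1}\big((\overline{\mathcal{K}}_{j+2,j})_{\iota}\big)$, namely the sphere map $S^{j-1}\to(\overline{\mathcal{K}}_{j+2,j})_{\iota}$ defining the cycle. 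Choosing, for each member of a basis of the infinite-dimensional space $\mathcal{H}_{j+2,j}(g=2)\subseteq\operatorname{im} h$, a homotopy class mapping to it under the linear map $h$ yields an infinite linearly independent subset of $\pi_{j-1}\big((\overline{\mathcal{K}}_{j+2,j})_{\iota}\big)\otimes\mathbb{Q}$. This is the first bullet of the corollary.

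Finally I would transfer the conclusion to $\mathcal{K}_{j+2,j}$ using the defining fibration sequence $\overline{\mathcal{K}}_{j+2,j}\to\mathcal{K}_{j+2,j}\to\mathrm{Imm}(\mathbb{R}^j,\mathbb{R}^{j+2})$. By Smale--Hirsch immersion theory $\mathrm{Imm}(\mathbb{R}^j,\mathbb{R}^{j+2})\simeq\Omega^j V_j(\mathbb{R}^{j+2})$, whose rational homotopy groups are finite-dimensional in every degree since $V_j(\mathbb{R}^{j+2})$ is a compact manifold. Rationalizing (exact, as $\mathbb{Q}$ is flat) the long exact homotopy sequence at $\iota$, the kernel of $\pi_{j-1}\big((\overline{\mathcal{K}}_{j+2,j})_{\iota}\big)\otimes\mathbb{Q}\to\pi_{j-1}\big((\mathcal{K}_{j+2,j})_{\iota}\big)\otimes\mathbb{Q}$ is a quotient of the finite-dimensional group $\pi_j(\mathrm{Imm}(\mathbb{R}^j,\mathbb{R}^{j+2}))\otimes\mathbb{Q}$; hence the image of that map, and a fortiori $\pi_{j-1}\big((\mathcal{K}_{j+2,j})_{\iota}\big)\otimes\mathbb{Q}$, is infinite-dimensional.

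I expect the only genuine subtlety to be in the middle step: one must ensure that classes known merely to be \emph{linearly independent in homology} remain \emph{linearly independent as rational homotopy classes}, and that the Hurewicz map, rationalization, and the component and basepoint conventions for the homotopy fiber $\overline{\mathcal{K}}_{j+2,j}$ are all mutually compatible (so that, in particular, the $S^{j-1}$-parametrized families of Corollary~\ref{from homotopy group} really do land in the component $(\overline{\mathcal{K}}_{j+2,j})_{\iota}$). Given Theorem~\ref{nontrivialityof2loopcycles} this is essentially formal; the substantive input --- infinite-dimensionality of $H^{top}(HGC_{j+2,j}(g=2))$ --- is imported wholesale from \cite{CCTW, Nak}.
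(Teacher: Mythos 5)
Your proposal is correct and follows essentially the same route as the paper: non-degeneracy of the pairing from Theorem \ref{nontrivialityof2loopcycles} together with the infinite-dimensionality of $H^{top}(HGC_{j+2,j}(g=2))$ from \cite{CCTW} forces $\mathcal{H}_{j+2,j}(g=2)$ to be infinite-dimensional, Corollary \ref{from homotopy group} identifies its generators as Hurewicz images of classes in $\pi_{j-1}((\overline{\mathcal{K}}_{j+2,j})_{\iota})$, and the passage to $\mathcal{K}_{j+2,j}$ uses the defining fibration over the immersion space with its finite-dimensional rational homotopy. Your write-up merely makes explicit the standard linear-algebra, Hurewicz, and Smale--Hirsch steps that the paper leaves implicit.
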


Suppose $k \geq 1$ is an integer and suppose $p, q, r$ are integers which satisfy $p+q+r+1 = k$, $p,r \geq 1$, $q \geq 0$. Let $H$ be a $2$-loop, top graph cocycle of order $\leq k$, expressed as
\[
H = \sum_i \frac{w(\Gamma_i)}{|\text{Aut}(P(\Gamma_i))|} \Gamma_i. 
\]
where $\Gamma_i$ are decorated graphs with different plain parts. $w(\Gamma_i)$ is the coefficient of a graph $\Gamma_i$, divided by the number of automorphisms $|\text{Aut}(P(\Gamma_i))|$ of the plain part. The following is the key proposition to show Theorem \ref{nontrivialityof2loopcycles} and is analogous to Proposition \ref{keyprop}. 

\begin{prop}[Counting formula for $2$-loop (co)cycles]
\label{keyprop2}
\[
<\overline{I}(H), d^{\prime}(\Theta(p,q,r))>  = <\overline{I}(H), d(\Theta(p,q,r))> = \pm w(\Theta(p,q,r)).
\]
where $\pm$ depends depends only on the orientation of  $\Theta(p,q,r)$ in the graph cocycle $H$.
\end{prop}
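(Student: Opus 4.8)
The plan is to reduce Proposition \ref{keyprop2} to the plain-graph counting formula already established, namely Theorem \ref{keyprop} (and behind it Theorem \ref{original counting formula}), by showing that among the decorated graphs $\Gamma_i$ appearing in $H$ only honest plain graphs of a very restricted shape can pair non-trivially with $d(\Theta(p,q,r))$. As a first, purely formal step, exactly as in the proof of Theorem \ref{keyprop} one invokes the Localizing Lemma and Pairing Lemma of \cite{Yos 1} to get $\langle \overline{I}(H), d^{\prime}(\Theta(p,q,r)) - d(\Theta(p,q,r))\rangle = 0$, so that it suffices to compute $\langle \overline{I}(H), d(\Theta(p,q,r))\rangle$.

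The heart of the argument is then a dimension/localization count identifying the contributing $\Gamma_i$. The cycle $d(\Theta(p,q,r))$ is obtained by perturbing the ribbon presentation $P(\Theta(p,q,r))$, which has exactly $k$ crossings, and its parameter space has dimension $k(n-j-2)+(j-1)$, matching the top degree of a $2$-loop order-$k$ cocycle. By the Localizing Lemma, $\overline{I}(\Gamma_i)$ can be non-zero on this cycle only if every one of the $k$ crossings is \emph{detected}: the two configuration points sitting on the disk and on the band segment of each crossing must be joined to the rest of the graph by dashed edges. Hence the plain part $P(\Gamma_i)$ must have at least $2k$ external black vertices incident to a dashed edge. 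Since $H$ has order $\leq k$, Proposition \ref{keypropforpairing} applies and forces $\Gamma_i$ to have trivial decorations (so it is an honest plain graph), no white vertices, exactly $k$ dashed edges, and exactly $2k$ external black vertices; in particular $|B(\Gamma_i)| = 2k$, so there is no room for internal black vertices either, which disposes of the case flagged in the footnote to Theorem \ref{keyprop}. On such graphs $\overline{I}$ agrees with the configuration space integral $I$ of Theorem \ref{original counting formula}, and all hypotheses of that theorem (admissible plain graph, no internal black vertices, order $k$, top degree) are met.

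From this point the computation is the same as in the proof of Theorem \ref{keyprop}. By Theorem \ref{original counting formula}, $\langle \overline{I}(\Gamma_i), d(\Theta(p,q,r))\rangle$ equals, up to the global sign $(-1)^{r(D(\Theta(p,q,r)))}$, the signed count of admissible plain graphs mapped onto the chord diagram $D(\Theta(p,q,r))$ satisfying conditions (I)--(IV) of Definition \ref{Graph-chord pairing}. Inspecting the segments $\segmenta$ and $\segmentb$ composing $D(\Theta(p,q,r))$, one checks that the only admissible solid configurations are those yielding the four graphs $D_1, D_2, D_3, D_4$ of Figure \ref{D1D2D3D4}, and that the labels chosen there make the four induced signs $s(D_i, \overline{D}_i)$ coincide. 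Combining with the $STU$ relation $w(\Theta(p,q,r)) = w(D_1)+w(D_2)+w(D_3)+w(D_4)$ valid in the cocycle $H$, we obtain
\[
\langle \overline{I}(H), d(\Theta(p,q,r))\rangle = \pm\bigl(w(D_1)+w(D_2)+w(D_3)+w(D_4)\bigr) = \pm\, w(\Theta(p,q,r)),
\]
with $\pm$ depending only on the orientation of $\Theta(p,q,r)$ in $H$, as claimed.

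I expect the main obstacle to be the localization step: one must verify that the genuinely decorated terms introduced in $DGC_{n,j}$ and in the construction of $\overline{I}$ truly cannot be seen by $d(\Theta(p,q,r))$, i.e.\ that the hypothesis of Proposition \ref{keypropforpairing} (at least $2k$ external black vertices with dashed edges) really does hold for every graph with non-vanishing pairing. This forces a careful re-run of the Localizing and Pairing Lemmas of \cite{Yos 1} in the decorated setting, tracking the $k$-crossings-versus-$(n-j-2)k+(j-1)$-dimensional-parameter-space bookkeeping; once that control is in place, everything else is the combinatorics already carried out for Theorem \ref{keyprop}.
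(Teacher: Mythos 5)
Your proposal follows essentially the same route as the paper's proof: reduce to $d(\Theta(p,q,r))$ via the Localizing and Pairing Lemmas of \cite{Yos 1} in the decorated setting, invoke Proposition \ref{keypropforpairing} to force any contributing $\Gamma_i$ to be an undecorated plain graph with no white vertices, exactly $k$ dashed edges and $2k$ external black vertices, and then run the $D_1,\dots,D_4$ counting and STU argument already carried out for Theorem \ref{keyprop}. The one small discrepancy is the exclusion of internal black vertices: Proposition \ref{keypropforpairing} only controls \emph{external} black vertices, so your claim that $|B(\Gamma_i)|=2k$ leaves ``no room'' for internal ones does not follow from it; the paper instead rules them out by a degree computation (an internal black vertex would shift the form degree away from the top degree paired with the cycle), a one-line repair that leaves the rest of your argument intact.
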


\begin{proof}[Poof of Theorem \ref{nontrivialityof2loopcycles}]
Let $h$ be a top graph cocycle of $HGC_{n,j}(g=2)$. Let  $H $ be a lift of $h$ to $DGC_{n,j}(g=2)$. \footnote{We call a graph cocycle $H \in DGC_{n,j}(g=2)$ is a lift of a graph cocycle $h \in HGC_{n,j}(g=2)$ if they represent the same cohomology class of $H^{\ast}( DGC_{n,j}(g=2)) \cong H^{\ast} (HGC_{n,j}(g=2))$ and the coefficients of hairy graphs without double edges or loop edges coincide. }
Assume $\overline{I}(H)$ vanishes on any cycle $d^{\prime}(\Theta(p,q,r))$. Then any coefficient of $\Theta(p,q,r)$ in $h$ must vanish by Proposition \ref{keyprop2}. This means $h= 0$ in $H^{top}(HGC_{n,j}(g=2))$. 
\end{proof}

\begin{proof}[Poof of Theorem \ref{keyprop2}]
We only have to show Lemma \ref{keylemmaforpairing} holds for decorated graphs. 
First, observe that Localizing lemma and Pairing Lemma in \cite{Yos 1}  holds even for a decorated graph.
Thus, if the pairing $<\overline{I}(\Gamma_i), d(\Theta(p,q,r))>$ is non-trivial, the graph $\Gamma_i$ must have at least $2k$ external vertices which have dashed edges. 
On the other hand, since the graph $\Gamma_i$ has order $\leq k$, we can show that $\Gamma_i$ has no decoration and no white vertices by Proposition \ref{keypropforpairing}. Moreover, we can see that $\Gamma_i$ has exactly $2k$ external vertices and exactly $k$ chords. By computing the degree, internal black vertices are not allowed. Then, only plain graphs without internal black vertices can survive. Hence, we can apply the counting formula \ref{original counting formula}.
\end{proof}

\begin{rem}
In \cite{Wat 5}, Watanabe constrructed equivalent Kontsevich characteristic classes of $\text{BDiff}_{\partial}(D^{j+1} \times S^1)$ through configuration space integrals. 
These invariants take values in the space $\mathcal{A}_{\Theta}( \mathbb{R}[t^{\pm 1}])$ whose elenments are described as
\[
 \begin{tikzpicture}[x=2.5pt,y=2.5pt,yscale=0.15,xscale=0.15, baseline=-3pt] 
%Thetaマーク
\draw (0,0) circle (100); 

\draw [] (100,1) -- (100,0); 
\draw [] (-100,-1) -- (-100,0); 
\draw [] (100,0) -- (-100,0); 

\draw (0,120) node {$f(t_1)$};
\draw (0,20) node {$g(t_2)$};
\draw (0,-80) node {$h(t_3)$};
\end{tikzpicture}
\quad (f(t), g(t), h(t) \in  \mathbb{R}[t^{\pm 1}]),
\]
which have relations 
\[
(f(t_1), g(t_2), h(t_3)) \sim (t_1f(t_1), t_2g(t_2),t_3 h(t_3)).
\]
It is interesting to compare our invariants with these invariants by substitution $t = e^x$.
\end{rem}

\subsection{Non-triviality of $3$-loop cycles}

\begin{theorem}[Theorem \ref{main theorem on general cases}]
\label{nontrivialityof3loopcycles}
Let $\mathcal{H}_{n,j}(g=3)$ be the class of $3$-loop cycles defined in Section \ref{3loopcycles}. 
Then, the pairing map 
\[
<, >: H^{\ast}(DGC_{n,j}(g=3))\otimes H_{\ast}(\overline{\mathcal{K}}_{n,j}, \mathbb{R}) \rightarrow \mathbb{R} \quad (H, c) \longmapsto <\overline{I}(H), c)>%which is well defined by our theorem 
\]
induces
\[
Pair: H^{top}(HGC_{n,j}(g=3)) \otimes \mathcal{H}_{n,j}(g=3) \rightarrow \mathbb{R} ,
\]
which is non-degenerate with respect to the first term. That is, we can give an injective map from the top hairy graph cohomology $H^{top}(HGC_{n,j}(g=3))$ to the dual of the class $\mathcal{H}_{n,j}(g=3)$ of cycles.
\end{theorem}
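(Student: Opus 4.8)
The plan is to reduce Theorem~\ref{nontrivialityof3loopcycles} to a $3$-loop counting formula --- the exact analogue of Proposition~\ref{keyprop2} --- and then repeat the short deduction used for Theorem~\ref{nontrivialityof2loopcycles}. Concretely, I would first establish that, for every top $3$-loop decorated cocycle $H=\sum_i \frac{w(\Gamma_i)}{|\text{Aut}(P(\Gamma_i))|}\,\Gamma_i$ of order $\leq k$ and every tuple $(p_i)_i$ satisfying one of the conditions (1), (2), (3) of Section~\ref{3loopcycles},
\[
\langle \overline{I}(H),\, d^{\prime}(\ctext{Y}((p_i)_i))\rangle \;=\; \langle \overline{I}(H),\, d(\ctext{Y}((p_i)_i))\rangle \;=\; \pm\, w(\ctext{Y}((p_i)_i)),
\]
where the sign depends only on the orientation assigned to $\ctext{Y}((p_i)_i)$ in $H$. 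Granting this, the theorem follows exactly as in the proof of Theorem~\ref{nontrivialityof2loopcycles}: given a nonzero class $h\in H^{top}(HGC_{n,j}(g=3))$, lift it through Theorem~\ref{main theorem 1 on hidden faces} and the quasi-isomorphism $DGC_{n,j}\simeq HGC_{n,j}$ to a cocycle $H\in DGC_{n,j}(g=3)$; were $\overline{I}(H)$ to pair to zero with every $d^{\prime}(\ctext{Y}((p_i)_i))$, the formula would force every coefficient $w(\ctext{Y}((p_i)_i))$ to vanish in $h$, hence $h=0$ since $H^{top}(HGC_{n,j}(g=3))$ is generated by cocycles supported on uni-trivalent hairy graphs (see the last paragraph for this point). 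Thus $h\mapsto Pair(h,-)$ is the asserted injection and $Pair$ is non-degenerate in the first variable.

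To prove the counting formula I would mimic, step by step, the proofs of Lemma~\ref{keylemmaforpairing}, Proposition~\ref{keyprop} and Proposition~\ref{keyprop2}. The identity $\langle \overline{I}(H), d^{\prime}(\ctext{Y}((p_i)_i))\rangle=\langle \overline{I}(H), d(\ctext{Y}((p_i)_i))\rangle$ is immediate from the Localizing Lemma and Pairing Lemma of \cite{Yos 1}: the crossings produced by the cross-change moves carry no $\star$ label, so no dashed edge of a contributing graph can land on them, and they do not affect the pairing. For the second equality, since $d(\ctext{Y}((p_i)_i))$ is built from a chord diagram $D(\ctext{Y}((p_i)_i))$ of order $k$, the same two lemmas show that any $\Gamma_i$ with $\langle \overline{I}(\Gamma_i), d(\ctext{Y}((p_i)_i))\rangle\neq 0$ must have at least $2k$ external vertices carrying dashed edges; Proposition~\ref{keypropforpairing} then forces $\Gamma_i$ to carry no decoration and no white vertex and to have exactly $k$ dashed edges and $2k$ external black vertices, while a degree count excludes internal black vertices. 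Hence only admissible plain graphs without internal black vertices survive and the original counting formula, Theorem~\ref{original counting formula}, applies on $D(\ctext{Y}((p_i)_i))$.

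It remains to carry out the combinatorics on $D(\ctext{Y}((p_i)_i))$, whose core is the complete graph $K_4$ with hairs. On each two-point segment \segmenta only the straight solid pairing is compatible with conditions (III) and (IV) of Definition~\ref{Graph-chord pairing}; on each three-point segment \segmentb the admissible connection patterns are explicitly enumerable, the one leaving a non-initial vertex without an incoming solid edge being excluded by condition (IV). Enumerating these choices at the segments attached to the two core vertices that become nodes produces a finite list of plain graphs $D_1,\dots,D_N$, each obtained from the oriented hairy graph $\ctext{Y}((p_i)_i)$ by an $STU$-type resolution \cite{Yos 2} at those vertices. Choosing the labels of the $D_\ell$ compatibly, as in Figure~\ref{D1D2D3D4}, so that $w(\ctext{Y}((p_i)_i))=\sum_\ell w(D_\ell)$ holds in $H$ and all signs $s(D_\ell,\overline{D}_\ell)$ agree, Theorem~\ref{original counting formula} gives $\langle I(H), d(\ctext{Y}((p_i)_i))\rangle=\sum_\ell w(D_\ell)=\pm\, w(\ctext{Y}((p_i)_i))$; the degenerate cases (2) and (3), where some $p_i=0$ and $D(\ctext{Y}((p_i)_i))$ uses fewer oriented lines, are treated identically with a shorter list.

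The main obstacle is twofold. The lighter part is sign bookkeeping: with two nodes there are two independent $STU$ resolutions, so one must track orientations through the induced-label rule of Definition~\ref{Induced color on BCR diagrams} and check that no binary choice at one node creates a graph whose solid component mixes the two nodes' oriented lines --- this is precisely where conditions (1), (2), (3) keep the node structure rigid, and it generalizes the $\Theta$-case verbatim. The heavier part is the reduction used in the last step of the first paragraph: one must check that the cocycles built from $\ctext{Y}((p_i)_i)$ with $(p_i)_i$ subject to (1), (2), (3) already span $H^{top}(HGC_{n,j}(g=3))$, i.e.\ that no top class is supported only on trivalent $3$-loop hairy graphs outside this family (those with a perfect-matching hair pattern, and any other trivalent $3$-loop core). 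I would settle this using the $\text{Aut}(K_4)\cong S_4$ action permuting the six hair parameters together, combined with the vanishing of graphs admitting an orientation-reversing automorphism, to reduce every relevant generator to one satisfying (1), (2) or (3); I expect these details, to be imported from the analysis of the top hairy cohomology in \cite{Yos 3, MO}, to be the genuine technical crux.
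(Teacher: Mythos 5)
Your proposal follows essentially the same route as the paper: you reduce the theorem to the $3$-loop counting formula (the paper's Proposition \ref{keyprop3}), prove it by the Localizing/Pairing lemmas, Proposition \ref{keypropforpairing} and a degree count to restrict to plain graphs, then enumerate the graphs counted on $D(\ctext{Y}((p_i)_i))$ and sum their weights via $STU$ relations, and finally run the same deduction as in the $2$-loop case. Two minor remarks: in cases (2) and (3) the per-line enumeration is larger, not shorter (six arborescences on a four-point line, resp.\ twenty-four on a five-point line, with the internal-black-vertex terms of the iterated $STU$ relations cancelling in pairs), and the step you single out as the crux --- that vanishing of all $w(\ctext{Y}((p_i)_i))$ for $(p_i)_i$ in the family (1),(2),(3) forces $h=0$ in $H^{top}(HGC_{n,j}(g=3))$ --- is asserted without further argument in the paper too, so your plan to justify it via the $S_4$-symmetry and the results of \cite{MO, Yos 3} is consistent with, and if anything more explicit than, the paper's treatment.
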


In Cororally \ref{3loop from homotopy group}, we showed that $\mathcal{H}_{n,j}(g=3)$ lies in the homotopy group  $\pi_{2(j-1)} (\overline{\mathcal{K}}_{j+2, j})_{\iota}$. 
Since $H^{top}(HGC_{n,j}(g=3))$ is known to be infinite-dimensional (see \cite{MO,Yos 3}), we have the following. 
This result is about a higher range than Budney--Gabai \cite {BG} and Watanabe \cite{Wat 5}, though their approach is very likely to be extended to this range. 

\begin{coro}
\label{3loop non-finite generation}
$\pi_{2(j-1)} (\overline{\mathcal{K}}_{j+2, j})_{\iota} \otimes \mathbb{Q}$ is infinite-dimensional. Hence, $\pi_{2(j-1)} (\mathcal{K}_{j+2, j})_{\iota} \otimes \mathbb{Q}$ is infinte-dimensional. 
\end{coro}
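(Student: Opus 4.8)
The plan is to deduce the statement from Theorem~\ref{nontrivialityof3loopcycles}, the known infinite-dimensionality of $H^{top}(HGC_{j+2,j}(g=3))$ \cite{MO, Yos 3}, and Corollary~\ref{3loop from homotopy group}, following verbatim the pattern used for the $g=2$ case in Corollary~\ref{non-finite generation}. First I would note that, since $n-j=2$, the top $g=3$ cohomology $H^{top}(HGC_{j+2,j}(g=3))$ is concentrated in degree $2(j-1)$ for every order $k$ (cf.\ the footnote to Theorem~\ref{main theorem on general cases}), so the infinitely many linearly independent classes supplied by \cite{MO, Yos 3} all sit in that single degree. Theorem~\ref{nontrivialityof3loopcycles} provides a pairing $Pair\colon H^{top}(HGC_{j+2,j}(g=3)) \otimes \mathcal{H}_{j+2,j}(g=3) \to \mathbb{R}$ that is non-degenerate in the first variable, hence an injection $H^{top}(HGC_{j+2,j}(g=3)) \hookrightarrow \operatorname{Hom}_{\mathbb{R}}(\mathcal{H}_{j+2,j}(g=3), \mathbb{R})$. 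An infinite-dimensional vector space cannot embed into the dual of a finite-dimensional one, so $\mathcal{H}_{j+2,j}(g=3)\subseteq H_{2(j-1)}(\overline{\mathcal{K}}_{j+2,j};\mathbb{R})$ is infinite-dimensional.

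Next I would promote this to a statement about homotopy groups. By Corollary~\ref{3loop from homotopy group}, each generator $d^{\prime}(\ctext{Y}((p_i)_i))((\varepsilon_i=1)_i)$ of $\mathcal{H}_{j+2,j}(g=3)$ represents an element of $\pi_{2(j-1)}((\overline{\mathcal{K}}_{j+2,j})_\iota)$; since $2(j-1)\ge 2$ this group is abelian, and the Hurewicz homomorphism factors as $\pi_{2(j-1)}\to\pi_{2(j-1)}\otimes\mathbb{Q}\to H_{2(j-1)}(-;\mathbb{Q})\to H_{2(j-1)}(-;\mathbb{R})$, the last map being an isomorphism onto the $\mathbb{Q}$-image after extension of scalars. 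As the images of these elements span the infinite-dimensional subspace $\mathcal{H}_{j+2,j}(g=3)$, the elements themselves span an infinite-dimensional subspace of $\pi_{2(j-1)}((\overline{\mathcal{K}}_{j+2,j})_\iota)\otimes\mathbb{Q}$, which is the first assertion.

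Finally, to transfer the conclusion to $\mathcal{K}_{j+2,j}$, I would invoke Smale--Hirsch: $\mathrm{Imm}(\mathbb{R}^j,\mathbb{R}^{j+2})\simeq\Omega^j V_2(\mathbb{R}^{j+2})$, whose homotopy groups are finitely generated in every degree, so all $\pi_m(\mathrm{Imm}(\mathbb{R}^j,\mathbb{R}^{j+2}))\otimes\mathbb{Q}$ are finite-dimensional. Restricting the homotopy fiber sequence $\overline{\mathcal{K}}_{j+2,j}\to\mathcal{K}_{j+2,j}\to\mathrm{Imm}(\mathbb{R}^j,\mathbb{R}^{j+2})$ to the unknot component and tensoring the resulting long exact sequence with $\mathbb{Q}$ gives exactness of
\[
\pi_{2(j-1)+1}(\mathrm{Imm}(\mathbb{R}^j,\mathbb{R}^{j+2}))\otimes\mathbb{Q}\longrightarrow\pi_{2(j-1)}((\overline{\mathcal{K}}_{j+2,j})_\iota)\otimes\mathbb{Q}\longrightarrow\pi_{2(j-1)}((\mathcal{K}_{j+2,j})_\iota)\otimes\mathbb{Q}.
\]
The left term being finite-dimensional and the middle term infinite-dimensional forces the right term to be infinite-dimensional.

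The genuinely hard inputs are already in place: the non-triviality of the pairing (Theorem~\ref{nontrivialityof3loopcycles}) and the infinite-dimensionality of $H^{top}(HGC_{j+2,j}(g=3))$ \cite{MO, Yos 3}. The only points requiring care within the corollary itself are (i) checking that the infinite supply of cocycles from \cite{MO, Yos 3} is carried by uni-trivalent $\ctext{Y}$-type hairy graphs of the shape our cycles in Section~\ref{3loopcycles} detect --- i.e.\ that imposing conditions (1), (2), (3) on $(p_i)_i$ does not reduce the relevant family to a finite set --- and (ii) the basepoint/component bookkeeping in the last fiber-sequence step; neither is expected to be a serious obstacle.
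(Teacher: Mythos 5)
Your argument is correct and is essentially the paper's own route: Theorem \ref{nontrivialityof3loopcycles} plus the infinite-dimensionality of $H^{top}(HGC_{j+2,j}(g=3))$ from \cite{MO, Yos 3} forces $\mathcal{H}_{j+2,j}(g=3)$ to be infinite-dimensional, Corollary \ref{3loop from homotopy group} (with the Hurewicz factorization you spell out) lifts this to $\pi_{2(j-1)}((\overline{\mathcal{K}}_{j+2,j})_{\iota})\otimes\mathbb{Q}$, and the ``Hence'' follows from the fibration over the immersion space exactly as you say; your worry (i) is already absorbed into the statement of Theorem \ref{nontrivialityof3loopcycles}, so it needs no separate check here. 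The only slip is immaterial: Smale--Hirsch gives $\mathrm{Imm}(\mathbb{R}^j,\mathbb{R}^{j+2})\simeq\Omega^j V_j(\mathbb{R}^{j+2})$ (the space of $j$-frames, not $2$-frames), but either Stiefel manifold has finitely generated homotopy groups, so your finiteness argument stands.
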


Suppose $k \geq 1$ is an integer and suppose $p_1, p_2, \dots, p_6$ are integers which satisfy $2 + \sum_i p_i = k$ and either of the following conditions:
\begin{itemize}
\item [(1)] $p_1, p_6, p_3, p_4 \geq 1$, 
\item [(2)] $p_1, p_6, p_4 \geq 1$ and $p_2, p_3, p_5 = 0$, 
\item [(3)] $p_1, p_6 \geq 1$ and $p_2, p_3, p_4, p_5 = 0$.
\end{itemize}

Let $H$ be a $3$-loop graph cocycle of order $\leq k$, of top degree, expressed as
\[
H = \sum_i \frac{w(\Gamma_i)}{|\text{Aut}(P(\Gamma_i))|} \Gamma_i. 
\]
where $\Gamma_i$ are decorated graphs with different plain parts. 
Theorem \ref{nontrivialityof3loopcycles} is shown by establishing the $3$-loop analogue of Proposition \ref{keyprop2}.

\begin{prop}[Counting formula for $3$-loop (co)cycles]
\label{keyprop3}
\[
<\overline{I}(H), d(\ctext{Y}((p_i)_i)> = \pm w(\ctext{Y}((p_i)_i)), 
\]
where $\pm$ depends depends only on the orientation of $\ctext{Y}((p_i)_i)$ in the graph cocycle $H$.
\end{prop}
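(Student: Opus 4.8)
The plan is to run, essentially verbatim, the blueprint of the proof of Proposition~\ref{keyprop2}, with the chord diagram $D(\ctext{Y}((p_i)_i))$ playing the role of $D(\Theta(p,q,r))$. First I would carry out the reduction to the counting formula. As in the $g=2$ case, the Localizing Lemma and Pairing Lemma of \cite{Yos 1} remain valid for decorated graphs, so a summand $\Gamma_i$ of $H$ can give a nonzero contribution $\langle\overline{I}(\Gamma_i),d(\ctext{Y}((p_i)_i))\rangle$ only if $P(\Gamma_i)$ has at least $2k$ external black vertices carrying a dashed edge. Since $\Gamma_i$ has order $\le k$, Proposition~\ref{keypropforpairing} then forces $\Gamma_i$ to be an undecorated plain graph with no white vertex, exactly $2k$ external black vertices and exactly $k$ dashed edges, and a degree count rules out internal black vertices; hence only plain graphs without internal black vertices survive and Theorem~\ref{original counting formula} applies to each of them. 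The same remark as in the $g=2$ case — the crossings introduced by the cross-change moves of $P^{\prime}(\ctext{Y}((p_i)_i))$ carry no $\star$, hence no perturbation — gives $\langle\overline{I}(H),d^{\prime}(\ctext{Y}((p_i)_i))\rangle=\langle\overline{I}(H),d(\ctext{Y}((p_i)_i))\rangle$, so it suffices to treat $d$.

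Next I would apply the counting formula to $C=D(\ctext{Y}((p_i)_i))$. On each segment $\segmenta$ and $\segmentb$ of an oriented line of $C$, conditions (III)--(IV) of Definition~\ref{Graph-chord pairing} permit only the broken-line configurations — in particular the long chord over a length-two piece of $\segmentb$ is never counted — exactly as in the proof of Lemma~\ref{keylemmaforpairing}. Consequently the count localizes at the two nodes and the two type-(II) vertices of $P(\ctext{Y}((p_i)_i))$, that is, at the four trivalent core vertices of $\ctext{Y}$, and the surviving plain graphs are precisely the graphs $D_\alpha$ obtained from the oriented hairy graph $\ctext{Y}((p_i)_i)$ by $STU$ relations \cite{Yos 2} at those vertices (with the usual identifications, and with some $D_\alpha$ coinciding or disappearing in the degenerate cases (2) and (3), where $C$ has fewer oriented lines). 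As in Figure~\ref{D1D2D3D4}, I would fix the labels of the $D_\alpha$ so that contracting the marked edges of each produces one common labelled graph; then $w(\ctext{Y}((p_i)_i))=\sum_\alpha w(D_\alpha)$ holds in the cocycle $H$ — this is simply the translation, under the quasi-isomorphism $HGC_{n,j}\simeq PGC_{n,j}$, of the iterated $STU$ relation at the core vertices — and, by the compatible labelling, the induced-label sign $s(D_\alpha,\overline{D}_\alpha)$ of Theorem~\ref{original counting formula} is independent of $\alpha$. Combining these, $\langle\overline{I}(H),d(\ctext{Y}((p_i)_i))\rangle=(-1)^{r(C)}\sum_{\overline{\Gamma}\in G(C)}w(\Gamma)\,s(\Gamma,\overline{\Gamma})=\pm\sum_\alpha w(D_\alpha)=\pm\,w(\ctext{Y}((p_i)_i))$, with the sign depending only on the orientation of $\ctext{Y}((p_i)_i)$ in $H$.

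The main obstacle is the localization step. With two nodes instead of the single node of the $\Theta$ case, one must verify that the local analyses at the distinct trivalent core vertices are genuinely independent: after the broken-line reduction the relevant portions of $C$ lie on disjoint oriented lines and in disjoint solid components of the counted graph, so the $STU$ relations can be applied one vertex at a time and their effects multiply cleanly. Checking this, together with the orientation bookkeeping of Definition~\ref{Induced color on BCR diagrams} — step~(1) of the ordering now involves two ingoing solid edges, one from each type-(I) vertex — is the bulk of the work. A further, more routine, obstacle is the separate treatment of the degenerate cases (2) and (3): there $D(\ctext{Y}((p_i)_i))$ has three or two oriented lines rather than the generic number, so some $D_\alpha$ merge or drop out, and one must confirm that both the relation $w(\ctext{Y}((p_i)_i))=\sum_\alpha w(D_\alpha)$ and the sign computation survive these degenerations.
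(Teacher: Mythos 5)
Your overall strategy---reduce to the counting formula via the Localizing and Pairing Lemmas together with Proposition~\ref{keypropforpairing}, then identify the graphs counted on $D(\ctext{Y}((p_i)_i))$ with STU-resolutions of $\ctext{Y}((p_i)_i)$---is the paper's strategy, and in case (1) your argument essentially reproduces the paper's proof (on each of the four three-vertex oriented lines exactly two configurations are counted, giving $2^4=16$ graphs whose weights sum, by STU, to $w(\ctext{Y}((p_i)_i))$). However, two steps are genuinely gapped. First, your assertion that ``a degree count rules out internal black vertices'' is the $g=2$ argument and fails for $g=3$: top-degree $3$-loop graphs of order $\le k$ with internal black vertices (and, more generally, with solid part not a disjoint union of broken lines) do exist --- this is precisely why the paper's alternative proof needs the extra hypothesis (A), and why weights of graphs containing an internal black vertex enter the paper's computation in case (2). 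Their harmlessness is not a degree phenomenon; it comes from the extension of the counting formula alluded to in the footnote to Theorem~\ref{keyprop} (such a graph has $|B(\Gamma)|>2k=|V(C)|$, so condition (I) of Definition~\ref{Graph-chord pairing} fails and it is not counted), and this is what has to be invoked.

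Second, cases (2) and (3) are not routine degenerations in which ``some $D_\alpha$ merge or drop out.'' When hairless edges force two trivalent core vertices of $\ctext{Y}$ onto one oriented line, the diagram acquires a line with four (case (2)) or five (case (3)) vertices, and on such a line six (resp.\ more) solid-edge configurations are counted --- including ones, such as the star with all solid edges issuing from the leftmost vertex, that are not obtained by independently STU-resolving the two core vertices. The identification of the sum of the counted weights with $w(\ctext{Y}((p_i)_i))$ then uses STU relations in which graphs with an internal black vertex occur twice with opposite signs and cancel; this cancellation is the substantive new point of the $3$-loop proof and is absent from your proposal. (Relatedly, your claim that only ``broken-line configurations'' survive on a segment \segmentb{} is imprecise: both the consecutive configuration and the fan with two solid edges leaving the first vertex are counted --- whence the factor $2$ per line in case (1) --- and the excluded configuration is ruled out by the ingoing-solid-edge condition (IV), not by failing to be a broken line.)
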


\begin{proof}[Poof of Theorem \ref{nontrivialityof3loopcycles}]
Let $h$ be a top graph cocycle of $HGC_{n,j}(g=3)$. Then, there exists a lift $H$ of $h$ to $DGC_{n,j}(g=3)$. Assume $\overline{I}(H)$ vanishes on any cycle  $d(\ctext{Y}((p_i)_i))$ . Then, by Proposition \ref{keyprop3}, any coefficient of $\ctext{Y}((p_i)_i)$ in $h$ must vanish, for $p_1, p_2, \dots, p_6$ satisfying either of (1)(2)(3). This means $h= 0$ in $H^{\ast}(DGC_{n,j}(g=3))$. 
\end{proof}

\begin{proof}[Proof of Proposition \ref{keyprop3}] 
First, suppose condition (1) is satisfied. In this case, the proof is similar to the proof of Proposition \ref{keyprop} and  \ref{keyprop2}. 
Recall that $D(\ctext{Y}((p_i)_i)$ has four oriented lines with three vertices, \segmentb{}. For this part, two  subgraphs 
\begin{tikzpicture}[x=1pt,y=1pt,yscale=0.15,xscale=0.15, baseline=-3pt] 

%solid edges
\draw  [line width =1pt] (100,0)..controls (200, 50)..(300,0);
\draw  [line width =1pt] (300,0)..controls (400, 50)..(500,0);

%dashed edges
\draw  [dash pattern = on 2pt off 3 pt, line width =1pt] (100,0)--(100,150);
\draw  [dash pattern = on 2pt off 3 pt, line width =1pt] (300,0)--(300,150);
\draw  [dash pattern = on 2pt off 3 pt, line width =1pt] (500,0)--(500,150);

%black vertices
\draw  [fill={rgb, 255:red, 0; green, 0; blue, 0 }  ,fill opacity=1 ]  (100,0) circle (20);
\draw  [fill={rgb, 255:red, 0; green, 0; blue, 0 }  ,fill opacity=1 ] (300,0) circle (20);
\draw  [fill={rgb, 255:red, 0; green, 0; blue, 0 }  ,fill opacity=1 ] (500,0) circle (20);

\end{tikzpicture}
and 
\begin{tikzpicture}[x=1pt,y=1pt,yscale=0.15,xscale=0.15, baseline=-3pt] 

%solid edges
\draw  [line width =1pt] (100,0)..controls (200, 50)..(300,0);
\draw  [line width =1pt] (100,0)..controls (250, 100)..(500,0);

%dashed edges
\draw  [dash pattern = on 2pt off 3 pt, line width =1pt] (100,0)--(100,150);
\draw  [dash pattern = on 2pt off 3 pt, line width =1pt] (300,0)--(300,150);
\draw  [dash pattern = on 2pt off 3 pt, line width =1pt] (500,0)--(500,150);

%black vertices
\draw  [fill={rgb, 255:red, 0; green, 0; blue, 0 }  ,fill opacity=1 ]  (100,0) circle (20);
\draw  [fill={rgb, 255:red, 0; green, 0; blue, 0 }  ,fill opacity=1 ] (300,0) circle (20);
\draw  [fill={rgb, 255:red, 0; green, 0; blue, 0 }  ,fill opacity=1 ] (500,0) circle (20);

\end{tikzpicture}
are counted. On the other hand, by STU relation, we have 
\[
w(
\begin{tikzpicture}[x=1pt,y=1pt,yscale=0.15,xscale=0.15, baseline=-3pt] 

%solid edges
\draw  [line width =1pt] (100,0)..controls (200, 50)..(300,0);
\draw  [line width =1pt] (300,0)..controls (400, 50)..(500,0);

%dashed edges
\draw  [dash pattern = on 2pt off 3 pt, line width =1pt] (100,0)--(100,150);
\draw  [dash pattern = on 2pt off 3 pt, line width =1pt] (300,0)--(300,150);
\draw  [dash pattern = on 2pt off 3 pt, line width =1pt] (500,0)--(500,150);

%black vertices
\draw  [fill={rgb, 255:red, 0; green, 0; blue, 0 }  ,fill opacity=1 ]  (100,0) circle (20);
\draw  [fill={rgb, 255:red, 0; green, 0; blue, 0 }  ,fill opacity=1 ] (300,0) circle (20);
\draw  [fill={rgb, 255:red, 0; green, 0; blue, 0 }  ,fill opacity=1 ] (500,0) circle (20);

\end{tikzpicture})
+ 
w(
\begin{tikzpicture}[x=1pt,y=1pt,yscale=0.15,xscale=0.15, baseline=-3pt] 

%solid edges
\draw  [line width =1pt] (100,0)..controls (200, 50)..(300,0);
\draw  [line width =1pt] (100,0)..controls (250, 100)..(500,0);

%dashed edges
\draw  [dash pattern = on 2pt off 3 pt, line width =1pt] (100,0)--(100,150);
\draw  [dash pattern = on 2pt off 3 pt, line width =1pt] (300,0)--(300,150);
\draw  [dash pattern = on 2pt off 3 pt, line width =1pt] (500,0)--(500,150);

%black vertices
\draw  [fill={rgb, 255:red, 0; green, 0; blue, 0 }  ,fill opacity=1 ]  (100,0) circle (20);
\draw  [fill={rgb, 255:red, 0; green, 0; blue, 0 }  ,fill opacity=1 ] (300,0) circle (20);
\draw  [fill={rgb, 255:red, 0; green, 0; blue, 0 }  ,fill opacity=1 ] (500,0) circle (20);

\end{tikzpicture})
= 
w(
\begin{tikzpicture}[x=1pt,y=1pt,yscale=0.15,xscale=0.15, baseline=-3pt] 
%solid edges
\draw  [line width =1pt] (100,0)..controls (200, 50)..(300,0);

%dashed edges
\draw  [dash pattern = on 2pt off 3 pt, line width =1pt] (100,170)--(60,300);
\draw  [dash pattern = on 2pt off 3 pt, line width =1pt] (100,170)--(140,300);
\draw  [dash pattern = on 2pt off 3 pt, line width =1pt] (100,130)--(100,0);
\draw  [dash pattern = on 2pt off 3 pt, line width =1pt] (300,0)--(300,150);

%black vertices
\draw  [fill={rgb, 255:red, 0; green, 0; blue, 0 }  ,fill opacity=1 ]  (100,0) circle (20);
\draw  [fill={rgb, 255:red, 0; green, 0; blue, 0 }  ,fill opacity=1 ] (300,0) circle (20);
%whitevertices
\draw  (100,150) circle (20);
\end{tikzpicture}).
\]
Other oriented lines of $D(\ctext{Y}((p_i)_i)$ are the ones with two vertices, \segmenta{}. And we have, 
\[
w(
\begin{tikzpicture} [x=1pt,y=1pt,yscale=0.15,xscale=0.15, baseline=-3pt] 
%solid edges
\draw  [line width =1pt] (100,0)--(300,0);
%dashed edges
\draw  [dash pattern = on 2pt off 3 pt, line width =1pt] (100,0)--(100,150);
\draw  [dash pattern = on 2pt off 3 pt, line width =1pt] (300,0)--(300,150);
%black vertices
\draw  [fill={rgb, 255:red, 0; green, 0; blue, 0 }  ,fill opacity=1 ]  (100,0) circle (20);
\draw  [fill={rgb, 255:red, 0; green, 0; blue, 0 }  ,fill opacity=1 ] (300,0) circle (20);
\end{tikzpicture})
=
w(
\begin{tikzpicture}[x=1pt,y=1pt,yscale=0.15,xscale=0.15, baseline=-3pt] 
%solid edges

%dashed edges
\draw  [dash pattern = on 2pt off 3 pt, line width =1pt] (100,170)--(60,300);
\draw  [dash pattern = on 2pt off 3 pt, line width =1pt] (100,170)--(140,300);
\draw  [dash pattern = on 2pt off 3 pt, line width =1pt] (100,130)--(100,0);

%black vertices
\draw  [fill={rgb, 255:red, 0; green, 0; blue, 0 }  ,fill opacity=1 ]  (100,0) circle (20);
%whitevertices
\draw  (100,150) circle (20);
\end{tikzpicture}).
\]
Then, we can show  $<\overline{I}(H), d(\ctext{Y}((p_i)_i)>$ is equal to the sum of the weights of $16 = 2^4$ graphs which are obtained by performing STU relations to $\ctext{Y}((p_i)_i)$. So we have the desired result.

Suppose the condition (2) is satisfied. Recall that in this case the diagram $D(\ctext{Y}((p_i)_i)$ has the part 
\begin{tikzpicture}[x=1pt,y=1pt,yscale=0.15,xscale=0.15, baseline=-3pt] 
%solid edges
\draw  [-Stealth, line width =1pt, color={rgb, 255:red, 0; green, 0; blue, 255 } ] (0,0)--(900,0);
%dashed edges
\draw  [dash pattern = on 2pt off 3 pt, line width =1pt] (100,0)--(100,150);
\draw  [dash pattern = on 2pt off 3 pt, line width =1pt] (300,0)--(300,150);
\draw  [dash pattern = on 2pt off 3 pt, line width =1pt] (500,0)--(500,150);
\draw  [dash pattern = on 2pt off 3 pt, line width =1pt] (700,0)--(700,150);
%black vertices
\draw  [fill={rgb, 255:red, 0; green, 0; blue, 0 }  ,fill opacity=1 ]  (100,0) circle (20);
\draw  [fill={rgb, 255:red, 0; green, 0; blue, 0 }  ,fill opacity=1 ] (300,0) circle (20);
\draw  [fill={rgb, 255:red, 0; green, 0; blue, 0 }  ,fill opacity=1 ] (500,0) circle (20);
\draw  [fill={rgb, 255:red, 0; green, 0; blue, 0 }  ,fill opacity=1 ] (700,0) circle (20);
\end{tikzpicture}.
For this part, six subgraphs 
\begin{tikzpicture}[x=1pt,y=1pt,yscale=0.15,xscale=0.15, baseline=-3pt]  %D_1
%solid edges
\draw  [line width =1pt] (100,0)..controls (200, 50)..(300,0);
\draw  [line width =1pt] (300,0)..controls (400, 50)..(500,0);
\draw  [line width =1pt] (500,0)..controls (600, 50)..(700,0);
%dashed edges
\draw  [dash pattern = on 2pt off 3 pt, line width =1pt] (100,0)--(100,150);
\draw  [dash pattern = on 2pt off 3 pt, line width =1pt] (300,0)--(300,150);
\draw  [dash pattern = on 2pt off 3 pt, line width =1pt] (500,0)--(500,150);
\draw  [dash pattern = on 2pt off 3 pt, line width =1pt] (700,0)--(700,150);
%black vertices
\draw  [fill={rgb, 255:red, 0; green, 0; blue, 0 }  ,fill opacity=1 ]  (100,0) circle (20);
\draw  [fill={rgb, 255:red, 0; green, 0; blue, 0 }  ,fill opacity=1 ] (300,0) circle (20);
\draw  [fill={rgb, 255:red, 0; green, 0; blue, 0 }  ,fill opacity=1 ] (500,0) circle (20);
\draw  [fill={rgb, 255:red, 0; green, 0; blue, 0 }  ,fill opacity=1 ] (700,0) circle (20);
\end{tikzpicture},
\begin{tikzpicture}[x=1pt,y=1pt,yscale=0.15,xscale=0.15, baseline=-3pt] %D_2
%solid edges
\draw  [line width =1pt] (100,0)..controls (200, 50)..(300,0);
\draw  [line width =1pt] (100,0)..controls (300, 100)..(500,0);
\draw  [line width =1pt] (500,0)..controls (600, 50)..(700,0);
%dashed edges
\draw  [dash pattern = on 2pt off 3 pt, line width =1pt] (100,0)--(100,150);
\draw  [dash pattern = on 2pt off 3 pt, line width =1pt] (300,0)--(300,150);
\draw  [dash pattern = on 2pt off 3 pt, line width =1pt] (500,0)--(500,150);
\draw  [dash pattern = on 2pt off 3 pt, line width =1pt] (700,0)--(700,150);
%black vertices
\draw  [fill={rgb, 255:red, 0; green, 0; blue, 0 }  ,fill opacity=1 ]  (100,0) circle (20);
\draw  [fill={rgb, 255:red, 0; green, 0; blue, 0 }  ,fill opacity=1 ] (300,0) circle (20);
\draw  [fill={rgb, 255:red, 0; green, 0; blue, 0 }  ,fill opacity=1 ] (500,0) circle (20);
\draw  [fill={rgb, 255:red, 0; green, 0; blue, 0 }  ,fill opacity=1 ] (700,0) circle (20);
\end{tikzpicture},
\begin{tikzpicture}[x=1pt,y=1pt,yscale=0.15,xscale=0.15, baseline=-3pt]  %D_3
%solid edges
\draw  [line width =1pt] (100,0)..controls (200, 50)..(300,0);
\draw  [line width =1pt] (300,0)..controls (400, 50)..(500,0);
\draw  [line width =1pt] (100,0)..controls (400, 150)..(700,0);
%dashed edges
\draw  [dash pattern = on 2pt off 3 pt, line width =1pt] (100,0)--(100,150);
\draw  [dash pattern = on 2pt off 3 pt, line width =1pt] (300,0)--(300,150);
\draw  [dash pattern = on 2pt off 3 pt, line width =1pt] (500,0)--(500,150);
\draw  [dash pattern = on 2pt off 3 pt, line width =1pt] (700,0)--(700,150);
%black vertices
\draw  [fill={rgb, 255:red, 0; green, 0; blue, 0 }  ,fill opacity=1 ]  (100,0) circle (20);
\draw  [fill={rgb, 255:red, 0; green, 0; blue, 0 }  ,fill opacity=1 ] (300,0) circle (20);
\draw  [fill={rgb, 255:red, 0; green, 0; blue, 0 }  ,fill opacity=1 ] (500,0) circle (20);
\draw  [fill={rgb, 255:red, 0; green, 0; blue, 0 }  ,fill opacity=1 ] (700,0) circle (20);
\end{tikzpicture},
\begin{tikzpicture}[x=1pt,y=1pt,yscale=0.15,xscale=0.15, baseline=-3pt] %D_4
%solid edges
\draw  [line width =1pt] (100,0)..controls (200, 50)..(300,0);
\draw  [line width =1pt] (100,0)..controls (300, 100)..(500,0);
\draw  [line width =1pt] (300,0)..controls (500, 100)..(700,0);
%dashed edges
\draw  [dash pattern = on 2pt off 3 pt, line width =1pt] (100,0)--(100,150);
\draw  [dash pattern = on 2pt off 3 pt, line width =1pt] (300,0)--(300,150);
\draw  [dash pattern = on 2pt off 3 pt, line width =1pt] (500,0)--(500,150);
\draw  [dash pattern = on 2pt off 3 pt, line width =1pt] (700,0)--(700,150);
%black vertices
\draw  [fill={rgb, 255:red, 0; green, 0; blue, 0 }  ,fill opacity=1 ]  (100,0) circle (20);
\draw  [fill={rgb, 255:red, 0; green, 0; blue, 0 }  ,fill opacity=1 ] (300,0) circle (20);
\draw  [fill={rgb, 255:red, 0; green, 0; blue, 0 }  ,fill opacity=1 ] (500,0) circle (20);
\draw  [fill={rgb, 255:red, 0; green, 0; blue, 0 }  ,fill opacity=1 ] (700,0) circle (20);
\end{tikzpicture},
\begin{tikzpicture}[x=1pt,y=1pt,yscale=0.15,xscale=0.15, baseline=-3pt] %D_5
%solid edges
\draw  [line width =1pt] (100,0)..controls (200, 50)..(300,0);
\draw  [line width =1pt] (100,0)..controls (300, 100)..(500,0);
\draw  [line width =1pt] (100,0)..controls (300, 150)..(700,0);
%dashed edges
\draw  [dash pattern = on 2pt off 3 pt, line width =1pt] (100,0)--(100,150);
\draw  [dash pattern = on 2pt off 3 pt, line width =1pt] (300,0)--(300,150);
\draw  [dash pattern = on 2pt off 3 pt, line width =1pt] (500,0)--(500,150);
\draw  [dash pattern = on 2pt off 3 pt, line width =1pt] (700,0)--(700,150);
%black vertices
\draw  [fill={rgb, 255:red, 0; green, 0; blue, 0 }  ,fill opacity=1 ]  (100,0) circle (20);
\draw  [fill={rgb, 255:red, 0; green, 0; blue, 0 }  ,fill opacity=1 ] (300,0) circle (20);
\draw  [fill={rgb, 255:red, 0; green, 0; blue, 0 }  ,fill opacity=1 ] (500,0) circle (20);
\draw  [fill={rgb, 255:red, 0; green, 0; blue, 0 }  ,fill opacity=1 ] (700,0) circle (20);
\end{tikzpicture},
\begin{tikzpicture}[x=1pt,y=1pt,yscale=0.15,xscale=0.15, baseline=-3pt] %D_6
%solid edges
\draw  [line width =1pt] (100,0)..controls (200, 50)..(300,0);
\draw  [line width =1pt] (300,0)..controls (400, 50)..(500,0);
\draw  [line width =1pt] (300,0)..controls (500, 100)..(700,0);
%dashed edges
\draw  [dash pattern = on 2pt off 3 pt, line width =1pt] (100,0)--(100,150);
\draw  [dash pattern = on 2pt off 3 pt, line width =1pt] (300,0)--(300,150);
\draw  [dash pattern = on 2pt off 3 pt, line width =1pt] (500,0)--(500,150);
\draw  [dash pattern = on 2pt off 3 pt, line width =1pt] (700,0)--(700,150);
%black vertices
\draw  [fill={rgb, 255:red, 0; green, 0; blue, 0 }  ,fill opacity=1 ]  (100,0) circle (20);
\draw  [fill={rgb, 255:red, 0; green, 0; blue, 0 }  ,fill opacity=1 ] (300,0) circle (20);
\draw  [fill={rgb, 255:red, 0; green, 0; blue, 0 }  ,fill opacity=1 ] (500,0) circle (20);
\draw  [fill={rgb, 255:red, 0; green, 0; blue, 0 }  ,fill opacity=1 ] (700,0) circle (20);
\end{tikzpicture}
are counted. On the other hand, the sum of the weights of these six graphs
coincides with 
\[
w(
\begin{tikzpicture}[x=1pt,y=1pt,yscale=0.15,xscale=0.15, baseline=-3pt] %D_1
%solid edges
\draw  [line width =1pt] (100,0)..controls (200, 50)..(300,0);
\draw  [line width =1pt] (300,0)..controls (400, 50)..(500,0);
%dashed edges
\draw  [dash pattern = on 2pt off 3 pt, line width =1pt] (100,170)--(60,300);
\draw  [dash pattern = on 2pt off 3 pt, line width =1pt] (100,170)--(140,300);
\draw  [dash pattern = on 2pt off 3 pt, line width =1pt] (100,130)--(100,0);
\draw  [dash pattern = on 2pt off 3 pt, line width =1pt] (300,0)--(300,150);
\draw  [dash pattern = on 2pt off 3 pt, line width =1pt] (500,0)--(500,150);
%black vertices
\draw  [fill={rgb, 255:red, 0; green, 0; blue, 0 }  ,fill opacity=1 ]  (100,0) circle (20);
\draw  [fill={rgb, 255:red, 0; green, 0; blue, 0 }  ,fill opacity=1 ] (300,0) circle (20);
\draw  [fill={rgb, 255:red, 0; green, 0; blue, 0 }  ,fill opacity=1 ] (500,0) circle (20);
%whitevertices
\draw  (100,150) circle (20);
\end{tikzpicture}
)+
w(
\begin{tikzpicture}[x=1pt,y=1pt,yscale=0.15,xscale=0.15, baseline=-3pt] %D_1
%solid edges
\draw  [line width =1pt] (100,0)..controls (200, 50)..(300,0);
\draw  [line width =1pt] (100,0)..controls (300, 100)..(500,0);
%dashed edges
\draw  [dash pattern = on 2pt off 3 pt, line width =1pt] (100,170)--(60,300);
\draw  [dash pattern = on 2pt off 3 pt, line width =1pt] (100,170)--(140,300);
\draw  [dash pattern = on 2pt off 3 pt, line width =1pt] (100,130)--(100,0);
\draw  [dash pattern = on 2pt off 3 pt, line width =1pt] (300,0)--(300,150);
\draw  [dash pattern = on 2pt off 3 pt, line width =1pt] (500,0)--(500,150);
%black vertices
\draw  [fill={rgb, 255:red, 0; green, 0; blue, 0 }  ,fill opacity=1 ]  (100,0) circle (20);
\draw  [fill={rgb, 255:red, 0; green, 0; blue, 0 }  ,fill opacity=1 ] (300,0) circle (20);
\draw  [fill={rgb, 255:red, 0; green, 0; blue, 0 }  ,fill opacity=1 ] (500,0) circle (20);
%whitevertices
\draw  (100,150) circle (20);
\end{tikzpicture})
+
w(
\begin{tikzpicture}[x=1pt,y=1pt,yscale=0.15,xscale=0.15, baseline=-3pt] 
%solid edges
\draw  [line width =1pt] (100,0)--(300,-150);
\draw  [line width =1pt] (300,0)--(300,-150);
\draw  [line width =1pt] (500,0)--(300,-150);
%dashed edges
\draw  [dash pattern = on 2pt off 3 pt, line width =1pt] (100,0)--(100,150);
\draw  [dash pattern = on 2pt off 3 pt, line width =1pt] (300,0)--(300,150);
\draw  [dash pattern = on 2pt off 3 pt, line width =1pt] (500,0)--(500,150);
%black vertices
\draw  [fill={rgb, 255:red, 0; green, 0; blue, 0 }  ,fill opacity=1 ]  (100,0) circle (20);
\draw  [fill={rgb, 255:red, 0; green, 0; blue, 0 }  ,fill opacity=1 ] (300,0) circle (20);
\draw  [fill={rgb, 255:red, 0; green, 0; blue, 0 }  ,fill opacity=1 ] (500,0) circle (20);
\draw  (280,-130) rectangle (320,-170) [fill={rgb, 255:red, 0; green, 0; blue, 0 }  ,fill opacity=1 ] ;
\end{tikzpicture}
)-
w(
\begin{tikzpicture}[x=1pt,y=1pt,yscale=0.15,xscale=0.15, baseline=-3pt] 
%solid edges
\draw  [line width =1pt] (100,0)--(300,-150);
\draw  [line width =1pt] (300,0)--(300,-150);
\draw  [line width =1pt] (500,0)--(300,-150);
%dashed edges
\draw  [dash pattern = on 2pt off 3 pt, line width =1pt] (100,0)--(100,150);
\draw  [dash pattern = on 2pt off 3 pt, line width =1pt] (300,0)--(300,150);
\draw  [dash pattern = on 2pt off 3 pt, line width =1pt] (500,0)--(500,150);
%black vertices
\draw  [fill={rgb, 255:red, 0; green, 0; blue, 0 }  ,fill opacity=1 ]  (100,0) circle (20);
\draw  [fill={rgb, 255:red, 0; green, 0; blue, 0 }  ,fill opacity=1 ] (300,0) circle (20);
\draw  [fill={rgb, 255:red, 0; green, 0; blue, 0 }  ,fill opacity=1 ] (500,0) circle (20);
\draw  (280,-130) rectangle (320,-170) [fill={rgb, 255:red, 0; green, 0; blue, 0 }  ,fill opacity=1 ] ;
\end{tikzpicture}
) = 
w(
\begin{tikzpicture}[x=1pt,y=1pt,yscale=0.15,xscale=0.15, baseline=-3pt] %D_1
%solid edges
\draw  [line width =1pt] (100,0)..controls (200, 50)..(300,0);

%dashed edges
\draw  [dash pattern = on 2pt off 3 pt, line width =1pt] (60,330)--(20,460);
\draw  [dash pattern = on 2pt off 3 pt, line width =1pt] (60,330)--(100,460);
\draw  [dash pattern = on 2pt off 3 pt, line width =1pt] (140,280)--(100,160);
\draw  [dash pattern = on 2pt off 3 pt, line width =1pt] (60,280)--(100,160);
\draw  [dash pattern = on 2pt off 3 pt, line width =1pt] (100,130)--(100,0);
\draw  [dash pattern = on 2pt off 3 pt, line width =1pt] (300,0)--(300,150);

%black vertices
\draw  [fill={rgb, 255:red, 0; green, 0; blue, 0 }  ,fill opacity=1 ]  (100,0) circle (20);
\draw  [fill={rgb, 255:red, 0; green, 0; blue, 0 }  ,fill opacity=1 ] (300,0) circle (20);
%whitevertices
\draw  (60,300) circle (20);
\draw  (100,150) circle (20);
\end{tikzpicture}).
\]
Here, the subgraph with an internal black vertex appears twice with the opposite signs and they are cancelled.
Note that the diagram $D(\ctext{Y}((p_i)_i)$ has two oriented lines with three vertices, \segmentb{} as well as oriented lines with two vertices, \segmenta{}.
Then, we can show  $<\overline{I}(H), d(\ctext{Y}((p_i)_i)>$ is equal to the sum of the weights of $24 = 2^2 \times 6$ graphs which are obtained by performing STU relations to $\ctext{Y}((p_i)_i)$.
The case (3) is similar, where $D(\ctext{Y}((p_i)_i)$ has an oriented line with five vertices. 
\end{proof}

We can give another proof of Theorem \ref{nontrivialityof3loopcycles} when $n-j$ is even, which is likely to be extended to the case $n-j$ is odd.
In this proof, we can reduce the number of graphs to consider. Consider the following assumption on the graph cocycle $H$. 
\begin{itemize}
\item [(A)] If $\Gamma_i$ is a plain graph, the solid part of $\Gamma_i$ is a disjoint union of broken lines. 
\end{itemize}
Instead of Proposition \ref{keyprop3}, we use the following weaker version. 
\begin{prop}
\label{keyprop4}
Let $H$ be q $3$-loop graph cocycle of order $\leq k$, of top degree which satisfies the condition (A). Then, we have
\[
<\overline{I}(H), d(\ctext{Y}((p_i)_i))> = \pm w(\ctext{Y}((p_i)_i))), 
\]
where $\pm$ depends only on the oriented graph $\ctext{Y}((p_i)_i)$.
\end{prop}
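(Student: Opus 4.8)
The plan is to follow the template of the proofs of Propositions~\ref{keyprop2} and~\ref{keyprop3}, exploiting condition (A) so that every graph entering the pairing is a \emph{good} plain graph and only the second (good‑plain‑graph) form of the counting formula Theorem~\ref{original counting formula} is needed. First I would run the localisation step verbatim: by the Localizing Lemma and the Pairing Lemma of \cite{Yos 1}, which hold for decorated graphs, a summand $\Gamma_i$ of $H$ can pair non‑trivially with $d(\ctext{Y}((p_i)_i))$ only if it has at least $2k$ external black vertices carrying dashed edges; since $\Gamma_i$ has order $\le k$, Proposition~\ref{keypropforpairing} then forces $\Gamma_i$ to be a plain graph with no decoration, no white vertex, exactly $2k$ external black vertices and exactly $k$ dashed edges, and a degree count excludes internal black vertices. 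Condition (A) now says precisely that such a $\Gamma_i$ is good, so the good‑plain‑graph formula of Theorem~\ref{original counting formula} gives
\[
<\overline{I}(H),\, d(\ctext{Y}((p_i)_i))> \;=\; (-1)^{r(C)} \sum_{\overline{\Gamma}\in G'(C)} w(\Gamma)\, s(\Gamma,\overline{\Gamma}), \qquad C = D(\ctext{Y}((p_i)_i)).
\]

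The core of the argument is to evaluate this sum. The set $G'(C)$ is the product, over the oriented lines of $C$, of the sets of good plain graphs supported on a single line — there are $t_i!$ of them on a line carrying $t_i$ off‑axis vertices — and, exactly as in the displayed identities in the proof of Proposition~\ref{keyprop3} for configurations (1), (2), (3), these are the graphs obtained from the hairy graph $\ctext{Y}((p_i)_i)$ by iterated $STU$ expansions. Using the $STU$ relations satisfied by the coefficients $w(\Gamma)$ in the cocycle $H$, I would group $G'(C)$ by the broken lines related through a single $STU$ relation, one oriented line at a time: each group collapses to the coefficient of the corresponding partially resolved graph, and iterating over all oriented lines collapses the whole sum to the coefficient of the single hairy graph $\ctext{Y}((p_i)_i)$, namely $w(\ctext{Y}((p_i)_i))$. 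The simplification provided by condition (A), and the reason this is the cleanest argument, is that no summand with an internal black (trivalent solid) vertex ever appears, so none of the pairwise cancellations of such terms carried out in the proof of Proposition~\ref{keyprop3} are needed; this is the promised reduction in the number of graphs to consider.

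I expect the main obstacle to be the sign bookkeeping in the telescoping: one must check that the signs $s(\Gamma,\overline{\Gamma})$ produced by the induced labels of Definition~\ref{Induced color on BCR diagrams} match, uniformly over all the broken lines in $G'(C)$ and over configurations (1), (2), (3), the signs occurring in the $STU$ relations coming from the closedness of $H$. As in the proof of Proposition~\ref{keyprop}, the way to handle this is to fix once and for all compatible labels on $\ctext{Y}((p_i)_i)$ and on all of its $STU$ expansions, so that every $s(\Gamma,\overline{\Gamma})$ reduces to a single global sign depending only on the chosen orientation of $\ctext{Y}((p_i)_i)$ in $H$; when $n-j$ is even the edge‑orientation contributions to these signs are trivial, which is why the argument is cleanest, and stated, in that case, the odd case requiring only heavier notation.
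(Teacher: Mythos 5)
Your proposal is correct and takes essentially the same route as the paper: after the localization/degree reduction imported from Propositions \ref{keyprop2} and \ref{keyprop3}, condition (A) restricts the counted subgraphs on each oriented line to the broken-line (good) configurations, and the line-by-line $STU$ telescoping of coefficients collapses the sum to $\pm w(\ctext{Y}((p_i)_i))$ with no internal-black-vertex terms to cancel, which is exactly the computation the paper displays for the four-vertex line before concluding that the rest is as in Proposition \ref{keyprop3}.
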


\begin{proof}[Another poof of Theorem \ref{nontrivialityof3loopcycles}]
Let $h$ be a top graph cocycle of $HGC_{n,j}(g=3)$ of order $\leq k$. Then, there exists a lift $H$ of $h$ to $DGC_{n,j}(g=3)$ which satisfies the condition $(A)$. This is a consequence of  \cite{Yos 2}. 
Then, the value of the pairing between the cocycle $\overline{I}(H)$ and the $3$-loop cycles $d(\ctext{Y}((p_i)_i))$ depends only on the coefficients of $\ctext{Y}((p_i)_i)$ if $k \leq 2 + \sum_i p_i$, by Proposition \ref{keyprop4}. The rest of the proof is the same as the original proof. 
\end{proof}

\begin{proof}[proof of \ref{keyprop4}]
Consider the part
\begin{tikzpicture}[x=1pt,y=1pt,yscale=0.15,xscale=0.15, baseline=-3pt] 
%solid edges
\draw  [-Stealth, line width =1pt, color={rgb, 255:red, 0; green, 0; blue, 255 } ] (0,0)--(900,0);
%dashed edges
\draw  [dash pattern = on 2pt off 3 pt, line width =1pt] (100,0)--(100,150);
\draw  [dash pattern = on 2pt off 3 pt, line width =1pt] (300,0)--(300,150);
\draw  [dash pattern = on 2pt off 3 pt, line width =1pt] (500,0)--(500,150);
\draw  [dash pattern = on 2pt off 3 pt, line width =1pt] (700,0)--(700,150);
%black vertices
\draw  [fill={rgb, 255:red, 0; green, 0; blue, 0 }  ,fill opacity=1 ]  (100,0) circle (20);
\draw  [fill={rgb, 255:red, 0; green, 0; blue, 0 }  ,fill opacity=1 ] (300,0) circle (20);
\draw  [fill={rgb, 255:red, 0; green, 0; blue, 0 }  ,fill opacity=1 ] (500,0) circle (20);
\draw  [fill={rgb, 255:red, 0; green, 0; blue, 0 }  ,fill opacity=1 ] (700,0) circle (20);
\end{tikzpicture}.
For this part, by assumption (A), only four subgraphs
\begin{tikzpicture}[x=1pt,y=1pt,yscale=0.15,xscale=0.15, baseline=-3pt]  %D_1
%solid edges
\draw  [line width =1pt] (100,0)..controls (200, 50)..(300,0);
\draw  [line width =1pt] (300,0)..controls (400, 50)..(500,0);
\draw  [line width =1pt] (500,0)..controls (600, 50)..(700,0);

%dashed edges
\draw  [dash pattern = on 2pt off 3 pt, line width =1pt] (100,0)--(100,150);
\draw  [dash pattern = on 2pt off 3 pt, line width =1pt] (300,0)--(300,150);
\draw  [dash pattern = on 2pt off 3 pt, line width =1pt] (500,0)--(500,150);
\draw  [dash pattern = on 2pt off 3 pt, line width =1pt] (700,0)--(700,150);

%black vertices
\draw  [fill={rgb, 255:red, 0; green, 0; blue, 0 }  ,fill opacity=1 ]  (100,0) circle (20);
\draw  [fill={rgb, 255:red, 0; green, 0; blue, 0 }  ,fill opacity=1 ] (300,0) circle (20);
\draw  [fill={rgb, 255:red, 0; green, 0; blue, 0 }  ,fill opacity=1 ] (500,0) circle (20);
\draw  [fill={rgb, 255:red, 0; green, 0; blue, 0 }  ,fill opacity=1 ] (700,0) circle (20);
\end{tikzpicture},
\begin{tikzpicture}[x=1pt,y=1pt,yscale=0.15,xscale=0.15, baseline=-3pt] %D_2
%solid edges
\draw  [line width =1pt] (100,0)..controls (200, 50)..(300,0);
\draw  [line width =1pt] (100,0)..controls (300, 100)..(500,0);
\draw  [line width =1pt] (500,0)..controls (600, 50)..(700,0);

%dashed edges
\draw  [dash pattern = on 2pt off 3 pt, line width =1pt] (100,0)--(100,150);
\draw  [dash pattern = on 2pt off 3 pt, line width =1pt] (300,0)--(300,150);
\draw  [dash pattern = on 2pt off 3 pt, line width =1pt] (500,0)--(500,150);
\draw  [dash pattern = on 2pt off 3 pt, line width =1pt] (700,0)--(700,150);

%black vertices
\draw  [fill={rgb, 255:red, 0; green, 0; blue, 0 }  ,fill opacity=1 ]  (100,0) circle (20);
\draw  [fill={rgb, 255:red, 0; green, 0; blue, 0 }  ,fill opacity=1 ] (300,0) circle (20);
\draw  [fill={rgb, 255:red, 0; green, 0; blue, 0 }  ,fill opacity=1 ] (500,0) circle (20);
\draw  [fill={rgb, 255:red, 0; green, 0; blue, 0 }  ,fill opacity=1 ] (700,0) circle (20);
\end{tikzpicture},
\begin{tikzpicture}[x=1pt,y=1pt,yscale=0.15,xscale=0.15, baseline=-3pt]  %D_3
%solid edges
\draw  [line width =1pt] (100,0)..controls (200, 50)..(300,0);
\draw  [line width =1pt] (300,0)..controls (400, 50)..(500,0);
\draw  [line width =1pt] (100,0)..controls (400, 150)..(700,0);

%dashed edges
\draw  [dash pattern = on 2pt off 3 pt, line width =1pt] (100,0)--(100,150);
\draw  [dash pattern = on 2pt off 3 pt, line width =1pt] (300,0)--(300,150);
\draw  [dash pattern = on 2pt off 3 pt, line width =1pt] (500,0)--(500,150);
\draw  [dash pattern = on 2pt off 3 pt, line width =1pt] (700,0)--(700,150);

%black vertices
\draw  [fill={rgb, 255:red, 0; green, 0; blue, 0 }  ,fill opacity=1 ]  (100,0) circle (20);
\draw  [fill={rgb, 255:red, 0; green, 0; blue, 0 }  ,fill opacity=1 ] (300,0) circle (20);
\draw  [fill={rgb, 255:red, 0; green, 0; blue, 0 }  ,fill opacity=1 ] (500,0) circle (20);
\draw  [fill={rgb, 255:red, 0; green, 0; blue, 0 }  ,fill opacity=1 ] (700,0) circle (20);
\end{tikzpicture},
\begin{tikzpicture}[x=1pt,y=1pt,yscale=0.15,xscale=0.15, baseline=-3pt] %D_4
%solid edges
\draw  [line width =1pt] (100,0)..controls (200, 50)..(300,0);
\draw  [line width =1pt] (100,0)..controls (300, 100)..(500,0);
\draw  [line width =1pt] (300,0)..controls (500, 100)..(700,0);

%dashed edges
\draw  [dash pattern = on 2pt off 3 pt, line width =1pt] (100,0)--(100,150);
\draw  [dash pattern = on 2pt off 3 pt, line width =1pt] (300,0)--(300,150);
\draw  [dash pattern = on 2pt off 3 pt, line width =1pt] (500,0)--(500,150);
\draw  [dash pattern = on 2pt off 3 pt, line width =1pt] (700,0)--(700,150);

%black vertices
\draw  [fill={rgb, 255:red, 0; green, 0; blue, 0 }  ,fill opacity=1 ]  (100,0) circle (20);
\draw  [fill={rgb, 255:red, 0; green, 0; blue, 0 }  ,fill opacity=1 ] (300,0) circle (20);
\draw  [fill={rgb, 255:red, 0; green, 0; blue, 0 }  ,fill opacity=1 ] (500,0) circle (20);
\draw  [fill={rgb, 255:red, 0; green, 0; blue, 0 }  ,fill opacity=1 ] (700,0) circle (20);
\end{tikzpicture},
are counted. On the other hand, the sum of the weights of these four graphs coincides with 
\[
w(
\begin{tikzpicture}[x=1pt,y=1pt,yscale=0.15,xscale=0.15, baseline=-3pt] %D_1
%solid edges
\draw  [line width =1pt] (100,0)..controls (200, 50)..(300,0);
\draw  [line width =1pt] (300,0)..controls (400, 50)..(500,0);

%dashed edges
\draw  [dash pattern = on 2pt off 3 pt, line width =1pt] (100,170)--(60,300);
\draw  [dash pattern = on 2pt off 3 pt, line width =1pt] (100,170)--(140,300);
\draw  [dash pattern = on 2pt off 3 pt, line width =1pt] (100,130)--(100,0);
\draw  [dash pattern = on 2pt off 3 pt, line width =1pt] (300,0)--(300,150);
\draw  [dash pattern = on 2pt off 3 pt, line width =1pt] (500,0)--(500,150);

%black vertices
\draw  [fill={rgb, 255:red, 0; green, 0; blue, 0 }  ,fill opacity=1 ]  (100,0) circle (20);
\draw  [fill={rgb, 255:red, 0; green, 0; blue, 0 }  ,fill opacity=1 ] (300,0) circle (20);
\draw  [fill={rgb, 255:red, 0; green, 0; blue, 0 }  ,fill opacity=1 ] (500,0) circle (20);
%whitevertices
\draw  (100,150) circle (20);
\end{tikzpicture}
)+
w(
\begin{tikzpicture}[x=1pt,y=1pt,yscale=0.15,xscale=0.15, baseline=-3pt] %D_1
%solid edges
\draw  [line width =1pt] (100,0)..controls (200, 50)..(300,0);
\draw  [line width =1pt] (100,0)..controls (300, 100)..(500,0);

%dashed edges
\draw  [dash pattern = on 2pt off 3 pt, line width =1pt] (100,170)--(60,300);
\draw  [dash pattern = on 2pt off 3 pt, line width =1pt] (100,170)--(140,300);
\draw  [dash pattern = on 2pt off 3 pt, line width =1pt] (100,130)--(100,0);
\draw  [dash pattern = on 2pt off 3 pt, line width =1pt] (300,0)--(300,150);
\draw  [dash pattern = on 2pt off 3 pt, line width =1pt] (500,0)--(500,150);

%black vertices
\draw  [fill={rgb, 255:red, 0; green, 0; blue, 0 }  ,fill opacity=1 ]  (100,0) circle (20);
\draw  [fill={rgb, 255:red, 0; green, 0; blue, 0 }  ,fill opacity=1 ] (300,0) circle (20);
\draw  [fill={rgb, 255:red, 0; green, 0; blue, 0 }  ,fill opacity=1 ] (500,0) circle (20);
%whitevertices
\draw  (100,150) circle (20);
\end{tikzpicture})
= 
w(
\begin{tikzpicture}[x=1pt,y=1pt,yscale=0.15,xscale=0.15, baseline=-3pt] %D_1
%solid edges
\draw  [line width =1pt] (100,0)..controls (200, 50)..(300,0);

%dashed edges
\draw  [dash pattern = on 2pt off 3 pt, line width =1pt] (60,330)--(20,460);
\draw  [dash pattern = on 2pt off 3 pt, line width =1pt] (60,330)--(100,460);
\draw  [dash pattern = on 2pt off 3 pt, line width =1pt] (140,280)--(100,160);
\draw  [dash pattern = on 2pt off 3 pt, line width =1pt] (60,280)--(100,160);
\draw  [dash pattern = on 2pt off 3 pt, line width =1pt] (100,130)--(100,0);
\draw  [dash pattern = on 2pt off 3 pt, line width =1pt] (300,0)--(300,150);

%black vertices
\draw  [fill={rgb, 255:red, 0; green, 0; blue, 0 }  ,fill opacity=1 ]  (100,0) circle (20);
\draw  [fill={rgb, 255:red, 0; green, 0; blue, 0 }  ,fill opacity=1 ] (300,0) circle (20);
%whitevertices
\draw  (60,300) circle (20);
\draw  (100,150) circle (20);
\end{tikzpicture}), 
\]
again by assumption (A). The rest of the proof is the same as Proposition \ref{keyprop3}.
\end{proof}

\end{document}